\documentclass[11pt]{article}
\usepackage{amsmath, amssymb, amscd, amsthm, amsfonts}
\usepackage{graphicx}
\usepackage{hyperref}

\newtheorem{theorem}{Theorem}[section] 
\newtheorem{corollary}[theorem]{Corollary} 
\newtheorem{lemma}[theorem]{Lemma} 
\newtheorem{condition}[theorem]{Condition} 
\newtheorem{definition}[theorem]{Definition} 
\theoremstyle{remark}

\newcommand{\R}{\mathbb{R}}
\newcommand{\C}{\mathbb{C}}

\newcommand{\p}[1]{(#1)} 

\newcommand{\pB}[1]{\Bigl(#1\Bigr)}
\newcommand{\pbb}[1]{\biggl(#1\biggr)}

\newcommand{\norm}[1]{\lVert #1 \rVert} 

\DeclareMathOperator{\re}{Re}

\newcommand{\beq}{ \begin{equation} }
\newcommand{\eeq}{ \end{equation} }

\title{Free energy fluctuation of soft spherical Sherrington--Kirkpatrick model}
\author{Ji Oon Lee\footnote{Department of Mathematical Sciences, KAIST, Daejeon, 34141, Korea \\ email: \texttt{jioon.lee@kaist.edu}} \and
Seongcheol Lee\footnote{Department of Mathematical Sciences, KAIST, Daejeon, 34141, Korea \\ email: \texttt{seongcheol97@kaist.ac.kr}}}
\date{\today}

\begin{document}

\maketitle
\begin{abstract}
We consider a soft version of the spherical Sherrington--Kirkpatrick model, where the spherical constraint is replaced by a radial confinement term. We prove that, for a real symmetric disorder matrix under suitable spectral assumptions, the free energy exhibits two distinct fluctuation regimes. In the high-temperature regime, the fluctuations are asymptotically Gaussian. In the low-temperature regime, the fluctuations are governed by the largest eigenvalue and converge to the GOE Tracy--Widom distribution. These results show that the soft radial constraint preserves the fluctuation transition of the classical spherical Sherrington--Kirkpatrick model.
\end{abstract}
\section{Introduction}

Spin glass models were introduced to describe disordered magnetic systems in which the interactions between spins are random. Among the most important examples are the Edwards--Anderson model and its mean-field analogue, the Sherrington--Kirkpatrick (SK) model; see, for example, \cite{EdwardsAnderson1975,SherringtonKirkpatrick1975,Panchenko2013}. An important variant of the SK model is the spherical Sherrington--Kirkpatrick (SSK) model in which the spin variables are constrained to lie on the sphere of radius $\sqrt N$, i.e., $S_{N-1}:=\{\sigma\in \mathbb R^N:\|\sigma\|^2=N\}$. For a symmetric disorder matrix $M$, the partition function $Z_N(\beta)$ and the free energy $F_N(\beta)$ at the inverse temperature $\beta$ are defined by
\[
Z_N(\beta) = \int_{S_{N-1}} e^{\beta \langle \sigma, M \sigma \rangle}\,d\omega_N(\sigma), \qquad F_N(\beta)=\frac1N \log Z_N(\beta),
\]
where $\omega_N$ is the normalized uniform measure on $S_{N-1}$. 

The SSK model was introduced by Kosterlitz, Thouless, and Jones in \cite{KosterlitzThoulessJones1976} and its limiting free energy was later studied rigorously through the spherical analogue of the Parisi formula in \cite{Talagrand2006}. When the disorder is given by a Wigner matrix, the fluctuations of the free energy exhibit a phase transition: in the high-temperature regime they are asymptotically Gaussian of order $N^{-1}$, whereas in the low-temperature regime they are of order $N^{-2/3}$ and converge to the Tracy--Widom distribution \cite{BaikLee2016}. More precisely, in the high-temperature regime the fluctuations are governed by all eigenvalues of the disorder matrix through a linear spectral statistic, while in the low-temperature regime they are governed by the largest eigenvalue \cite{BaikLee2016}. The corresponding random matrix inputs are the central limit theorem for linear spectral statistics and the edge universality of the largest eigenvalue; see, for example, \cite{Johansson1998,LytovaPastur2009,LeeYin2014,TracyWidom1996}.

The results on the free energy and its fluctuation, especially the sharp phase transition of the free energy, have been extended in several directions by modifying either the interaction matrix or the Hamiltonian, while keeping the spherical constraint. For the SSK model with a ferromagnetic Curie--Weiss interaction, it was shown that three fluctuation regimes arise depending on the temperature and the coupling strength \cite{BaikLee2017}. For the SSK model with deformed Wigner disorder, the high-temperature fluctuations remain Gaussian, while the low-temperature fluctuations may become Weibull depending on the edge behavior of the spectrum \cite{LeeLi2023}. For the SSK model with sparse interaction, the limiting free energy remains the same as in the Wigner case, while the fluctuations change with the sparsity level \cite{KimLee2025}; the relevant local laws and edge universality results for sparse random matrices were developed, for example, in \cite{LeeSchnelli2018}. For a recent survey of free-energy fluctuations in the SK, SSK, and related models, we refer to \cite{CollinsWoodfinLe2025}.

In this paper, we consider a different type of modification, known as the soft spherical Sherrington--Kirkpatrick (SSSK) model. While the norm of the spin vector is fixed in the SSK model, we introduce a radial constraint for the spin vector, given by a convex confining function. More precisely, the partition function and the free energy of the SSSK model are defined as follows:
\[
Z_N(\beta) = \int_{\mathbb R^N} \exp\left\{ \beta\langle v,Mv\rangle - \beta N f\left(\frac{\|v\|^2}{N}\right) \right\}\,\prod_{i=1}^N dv_i, \qquad F_N(\beta)=\frac1N\log Z_N(\beta).
\]
Here, $M$ is a real symmetric matrix and $\prod dv_i$ denotes the Lebesgue measure on $\R^N$. (For the precise assumptions on the confinement function $f$, see Definition \ref{def:partition-function}.) In this model, the norm of the spin vector is no longer fixed, and thus the model considered here may be viewed as a soft version of the SSK model. Indeed, if we replace $f$ by $\kappa\phi$, where $\phi$ is strongly convex and uniquely minimized at $1$, then, at least formally, the limit $\kappa\to\infty$ forces $\|v\|^2/N$ to concentrate near $1$ and recovers the usual spherical constraint at the level of the radial variable.

The soft, spherical model of spin glass arises in the study of Langevin dynamics for spherical spin glasses. In the work of Ben Arous, Dembo, and Guionnet \cite{BenArousDemboGuionnet2001}, the spherical constraint is implemented through a radial confining potential, and the resulting Gibbs measure appears as the invariant measure of the dynamics. They studied the large-$N$ dynamics and its long-time behavior, including the aging phenomenon, and also analyzed the corresponding equilibrium measure and its limiting free energy. In particular, when $M$ is a GOE matrix and $f(x)=\frac{\kappa}{2}x^2$ for a fixed $\kappa>0$, 
\begin{align} \label{eq:GOE-Flimit}
F_N(\beta) \to F(\beta)
=
\begin{cases}
\displaystyle
\frac14+\frac12\log\left(\frac{\pi}{\beta}\right)
+\frac14\log\left(\frac{2\beta}{\kappa-2\beta}\right),
& 0<\beta\leq\frac{\kappa}{4}, \\[1.2em]
\displaystyle
-\frac14
+\frac12\log\left(\frac{\pi}{\beta}\right)
+\frac{2\beta}{\kappa},
& \beta>\frac{\kappa}{4}.
\end{cases}
\end{align}
Related dynamical aspects of spherical spin glasses have also been extensively studied, including aging phenomena and universality beyond Gaussian disorder \cite{CugliandoloDean1995,DemboGheissari2021}. More broadly, ideas and dynamical mean-field methods developed in the study of disordered systems and spin glasses have found applications in optimization, ecology, and learning theory; see \cite{Cugliandolo2024} for a recent review.

Similar soft-spin models also arise in equilibrium statistical mechanics. For example, Barra et al. \cite{BarraGenoveseGuerraTantari2014} studied a solvable mean-field spin glass with unbounded Gaussian spins stabilized by a nonlinear confinement, providing another soft-spin analogue of constrained spin-glass models.

The aim of the present paper is to prove the free energy fluctuation results for the soft spherical models; while the limiting free energy of the GOE model with quadratic confinement was proved in \cite{BenArousDemboGuionnet2001}, to our best knowledge, the fluctuations of the free energy have never been proved. In this paper, we show that the Gaussian-to-Tracy--Widom fluctuation transition of the classical spherical Sherrington--Kirkpatrick model, proved in \cite{BaikLee2016}, persists under the soft radial constraint. In the simplest case where $M$ is a GOE matrix and $f(x)=\kappa x^2 /2$ for a fixed $\kappa>0$, we obtain the following result:

\begin{theorem}[GOE case with quadratic confinement] \label{thm:specific-theorem}
Let $M$ be an $N\times N$ GOE matrix normalized so that $M_{ij}\sim \mathcal N(0,N^{-1})$ for $i<j$ and $M_{ii}\sim \mathcal N(0,2N^{-1})$. Let $f(x)=\kappa x^2 /2$ for a fixed $\kappa>0$. Then the following statements hold as $N\to\infty$. Here $F(\beta)$ is defined in \eqref{eq:GOE-Flimit}, and all convergences are in distribution.  
\begin{enumerate}
    \item[\textup{(i)}] If $0<\beta<\kappa/4$, then
\[
N\bigl(F_N(\beta)-F(\beta)\bigr) \Rightarrow \mathcal N(\ell_\kappa(\beta),\sigma_\kappa^2(\beta)),
\]
where $\mathcal N(m,s^2)$ denotes the Gaussian law with mean $m$ and variance $s^2$. Here,
\[
\ell_\kappa(\beta) = \frac14\log\left(\frac{1-4\beta/\kappa}{1-2\beta/\kappa}\right) -\frac12\log2, \qquad \sigma_\kappa^2(\beta) = -\frac12\log\left(\frac{1-4\beta/\kappa}{1-2\beta/\kappa}\right).
\]
    \item[\textup{(ii)}] If $\beta>\kappa/4$, then
\[
\frac{N^{2/3}}{\,2\beta/\kappa-\frac12\,} \bigl(F_N(\beta)-F(\beta)\bigr) \Rightarrow TW_1,
\]
where $TW_1$ is the GOE Tracy--Widom distribution.
\end{enumerate}
\end{theorem}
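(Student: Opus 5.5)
The plan is to reduce $Z_N(\beta)$ to a one-dimensional contour integral of the same type as in \cite{BaikLee2016}, and then run a steepest-descent analysis whose saddle point either stays away from the spectrum (high temperature) or sticks to the largest eigenvalue (low temperature).

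\textbf{Step 1: contour-integral representation.} Since $f(x)=\kappa x^2/2$, the weight is $e^{-\beta\kappa\|v\|^4/(2N)}$. I would linearize it with the Gaussian (Hubbard--Stratonovich) identity
\[
e^{-\beta\kappa\|v\|^4/(2N)}=\sqrt{\tfrac{N}{2\pi\beta\kappa}}\int_{\R}e^{-Ns^2/(2\beta\kappa)+ i s\|v\|^2}\,ds .
\]
The $v$-integral is then Gaussian. After rotating the $s$-contour to a vertical line $\re z=\gamma>\beta\lambda_1$, where $\lambda_1\ge\dots\ge\lambda_N$ are the eigenvalues of $M$, Fubini gives
\[
Z_N(\beta)=C_N\int_{\gamma-i\infty}^{\gamma+i\infty}\exp\Bigl\{\tfrac{N}{2\beta\kappa}z^2-\tfrac12\sum_{i=1}^N\log(z-\beta\lambda_i)\Bigr\}\,dz ,
\]
with an explicit constant $C_N$ (powers of $\pi$, $N$, $\beta\kappa$). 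Equivalently one can use polar coordinates, $Z_N=c_N\int_0^\infty t^{N/2-1}e^{-\beta N\kappa t^2/2}Z^{\rm SSK}_N(\beta t)\,dt$, which explains heuristically the threshold. With $F^{\rm SSK}(b)=b^2$ for $b<1/2$, maximizing $\tfrac12\log t-\beta\kappa t^2/2+\beta^2t^2$ gives $t^2=1/(2\beta(\kappa-2\beta))$, and the condition $\beta t<1/2$ is exactly $\beta<\kappa/4$. I will nevertheless work with the single contour integral, since it makes the analysis identical in form to \cite{BaikLee2016}.

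\textbf{Step 2: high temperature, $\beta<\kappa/4$.} Define $G(z)=\frac{z^2}{2\beta\kappa}-\frac{1}{2N}\sum_i\log(z-\beta\lambda_i)$ and its deterministic counterpart $\widehat G$, obtained by replacing the empirical measure with the semicircle law. The deterministic saddle $\hat\gamma$ solves $\hat\gamma/(\beta\kappa)=\tfrac12\int\frac{d\rho_{sc}(x)}{\hat\gamma-\beta x}$, and it satisfies $\hat\gamma>2\beta$ precisely when $\beta<\kappa/4$.

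Using the rigidity bound $|\lambda_1-2|\le N^{-2/3+\epsilon}$ with high probability, I would deform the contour through the random saddle $\gamma_N$ of $G$, which lies within $O(N^{-1})$ of $\hat\gamma$. Steepest descent then gives
\[
\frac1N\log Z_N=\frac1N\log C_N+G(\gamma_N)+\frac1{N}\log\sqrt{\tfrac{2\pi}{NG''(\gamma_N)}}+O(N^{-2}).
\]
Expanding around $\hat\gamma$ yields
\[
N\bigl(F_N-F\bigr)=-\tfrac12\Bigl(\sum_i\log(\hat\gamma-\beta\lambda_i)-N\!\int\!\log(\hat\gamma-\beta x)\,d\rho_{sc}(x)\Bigr)+\text{deterministic}+o(1).
\]
The linear-statistic CLT for the GOE \cite{Johansson1998,LytovaPastur2009} then gives a Gaussian limit. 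Its mean collects the GOE $1/N$-correction, the Hessian term, and the Stirling-type terms from $C_N$. Computing the variance from the Chebyshev-coefficient formula should reproduce $\sigma_\kappa^2(\beta)=-\frac12\log\frac{1-4\beta/\kappa}{1-2\beta/\kappa}$. Matching all constants, including the $-\frac12\log 2$ in $\ell_\kappa$, is routine but tedious.

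\textbf{Step 3: low temperature, $\beta>\kappa/4$.} Now $\widehat G'(z)>0$ on $(2\beta,\infty)$, so the saddle merges with the edge, and the random saddle lies within $O(N^{-1})$ of $\beta\lambda_1$. Following \cite{BaikLee2016}, I would take $\gamma=\beta\lambda_1+c/N$. The integral is then dominated by the neighbourhood of $\beta\lambda_1$, and $\frac1N\log Z_N$ equals $\frac1N\log C_N+G^{(1)}(\beta\lambda_1)+O(N^{-1+\epsilon})$, where $G^{(1)}$ is $G$ with the $\lambda_1$ term removed.

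Using rigidity and the edge local law, the sum $\frac1N\sum_{i\ge2}\log(\beta\lambda_1-\beta\lambda_i)$ can be replaced by $\int\log(\beta\lambda_1-\beta x)\,d\rho_{sc}$ up to $O(N^{-1+\epsilon})$, which is $o(N^{-2/3})$. A Taylor expansion in $\lambda_1-2$ then gives
\[
F_N-F=\bigl(2\beta/\kappa-\tfrac12\bigr)(\lambda_1-2)+o(N^{-2/3}),
\]
where the coefficient is $\partial_\lambda$ of the leading term evaluated at $\lambda=2$. Edge universality \cite{TracyWidom1996} gives $N^{2/3}(\lambda_1-2)\Rightarrow TW_1$, which yields (ii).

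\textbf{Main obstacle.} I expect the hardest part to be the low-temperature contour analysis. One must control the contribution of the contour near $\beta\lambda_1$, where the $\lambda_1$ and $\lambda_2$ terms are singular, and show that the gap $\lambda_1-\lambda_2\sim N^{-2/3}$ only produces $O(N^{-1}\log N)$ errors. I would handle this exactly as in \cite{BaikLee2016}: split the contour into $|z-\beta\lambda_1|\le N^{-2/3+\epsilon}$ and its complement. The near part is bounded using the level-repulsion tail estimate for $\lambda_1-\lambda_2$, and the far part using $\re G$ decreasing along the vertical line.
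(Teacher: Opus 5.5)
Your route is correct and genuinely different from the paper's. One step of your low-temperature sketch, however, needs a different argument from the one you describe (second paragraph below).

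The paper never uses that $f$ is quadratic. It writes $v=\sqrt t\,\sigma$ and applies the spherical contour formula at effective inverse temperature $\beta t$. It then splits the radial integral at $t=\beta_c/\beta$ into $Z_N^H$ and $Z_N^L$. For the bulk piece it does a local Laplace expansion in $t$ (the factor $I_N$), producing the phase $J$. For the edge piece it proves estimates uniformly in $t$ (the bounds on $K_N(t)$), followed by a Laplace expansion around $\tau_{\lambda_1}$. It shows one piece is exponentially dominant in each phase, and reads off Theorem~\ref{thm:specific-theorem} by specializing Theorem~\ref{thm:general-theorem}.

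Your Hubbard--Stratonovich step integrates the radial variable out exactly. In the variable $w=z/\beta$ your identity reads $Z_N=\frac{N}{2\mathrm{i}}\bigl(\frac{\pi}{\beta}\bigr)^{N/2-1}\sqrt{\frac{2\pi}{\beta\kappa N}}\int_{\re w=\gamma/\beta}e^{\frac N2 J(w)}\,dw$ with $V(w)=w^2/(2\kappa)$. This is the paper's high-temperature heuristic, with $I_N$ replaced by its Laplace value, turned into an identity valid for every $\beta$. The $H/L$ split, the truncated radial integrals, their mutual comparison and all uniformity in $t$ disappear. In the high-temperature phase the vertical line through $\gamma_N$ is a good contour outright, thanks to the factor $e^{-Ny^2/(2\beta\kappa)}$.

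The constants do come out as stated. Your prefactor contains no Gamma functions; the $\sqrt N$ cancels the Gaussian width. With $a=2\beta/\kappa$ you get $\beta\kappa G''(\hat\gamma)\to\frac{2(1-a)}{1-2a}$, which is the source of the $-\frac12\log 2$. The GOE values $\mathrm{Mean}=\frac12\log\frac{1-2a}{1-a}$ and $\mathrm{Var}=-2\log\frac{1-2a}{1-a}$ then give $\ell_\kappa$ and $\sigma_\kappa^2$. What the paper's heavier route buys is generality: your identity exists only because $e^{-\beta\kappa\|v\|^4/(2N)}$ is a Gaussian mixture in $\|v\|^2$, whereas the radial decomposition handles any strongly convex $f$.

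The step your sketch does not deliver is the low-temperature lower bound, needed in the form $\log Z_N\ge\log C_N+NG(\gamma_N)-o(N^{1/3})$. Splitting the vertical line into near and far parts, and using that $|e^{NG}|$ decreases along it, gives only the upper bound and shows the far part is negligible. Near $\gamma_N$ one has $NG''(\gamma_N)\asymp N^{2}$ and $|NG'''(\gamma_N)|\asymp N^{3}$, up to $N^{O(\epsilon)}$. So on the natural scale $|y|\sim N^{-1}$ the phase of the integrand varies by $O(1)$. Absolute-value estimates cannot exclude cancellation, and a tail bound on $\lambda_1-\lambda_2$ does not address this.

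The missing idea, used in the paper's appendix following Baik--Lee and Lee--Li, is to deform to the steepest-descent curve $\operatorname{Im}G=0$ through $\gamma_N$. Combining the two conjugate branches, the integral becomes $2\mathrm{i}\int_0^\infty e^{NG(h(y)+\mathrm{i}y)}\,dy$, with a positive integrand and $\re G$ decreasing in $y$. A piece of length $N^{-C}$ next to $\gamma_N$ then already gives $N^{-C}e^{NG(\gamma_N)}$. In your setting this curve is even simpler than in the paper. For each $y>0$, the map $x\mapsto\frac{xy}{\beta\kappa}-\frac1{2N}\sum_i\operatorname{Arg}(x-\beta\lambda_i+\mathrm{i}y)$ increases strictly from $-\infty$ to $+\infty$. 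Hence the curve is a global graph $x=h(y)$ with $0<h(y)<\min\{\gamma_N,\pi\beta\kappa/(2y)\}$, asymptotic to the imaginary axis, and the deformation is justified by the Gaussian decay.

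A smaller point: $\int\log(\beta\lambda_1-\beta x)\,d\rho_{sc}(x)$ is undefined when $\lambda_1<2$, which happens with non-vanishing probability. Evaluate instead at $\gamma_N$, which lies at distance at least $c/N$ from $\beta\lambda_1$. Then use the linearized edge expansion of Corollary~\ref{cor:edge-approximation}, namely $\frac1N\sum_i\log(w-\lambda_i)=\frac12+(w-2)+O_\prec(N^{-1})$ for $w=\gamma_N/\beta$. This yields the coefficient $\frac{2\beta}{\kappa}-\frac12$ directly and removes any need for level repulsion.
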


Theorem \ref{thm:specific-theorem} is a special case of our main result, Theorem \ref{thm:general-theorem}, where the free energy fluctuation results are generalized in two different directions; the results in Theorem \ref{thm:specific-theorem} hold with suitable changes, even when the disorder $M$ is not a GOE matrix and the confining potential $f$ is not quadratic.

Our proof of the main result is based on the integral representation of the partition function (Corollary \ref{cor:soft-integral-representation}) and the steepest-descent analysis. The main technical difficulty in the analysis stems from the fact that the new `radial variable' ($t$ in Corollary \ref{cor:soft-integral-representation}) is introduced in the integral representation formula due to the soft confining potential, and it is coupled with the inverse temperature $\beta$. In the analysis of the SSK model, where the radial variable is absent, the free energy is approximated by a linear statistic of the eigenvalues in the high-temperature regime, while in the low-temperature regime the leading contribution comes from the top eigenvalue. In contrast, due to the coupling between the radial variable and the inverse temperature, we find that for any $\beta$, the partition function is decomposed into a `high temperature' integral where all eigenvalues contribute and a `low temperature' integral where the contribution from the largest eigenvalue dominates. For more detail, see Section~\ref{sec:integral}.

The study of the free energy fluctuation can be applied to the signal detection problem for spiked Wigner matrices. Suppose that matrix-type data is given, which is of the form
\[
Y_{ij} = \sqrt{N} M_{ij} + \sqrt{\frac{\lambda}{N}} v_i v_j.
\]
The matrix $Y$ is a spiked Wigner matrix, where $v$ is the signal (or spike), $M$ the noise, and $\lambda$ the signal-to-noise ratio (SNR). In the detection problem, the key question is whether the data matrix is drawn from a spiked distribution ($\lambda = \lambda_0$ for a certain $\lambda_0 > 0$) or an unspiked (null) distribution ($\lambda = 0$). It is well-known that the optimal statistic to test the presence of the signal is the Radon--Nikodym derivative, or the likelihood ratio.
Fix $\alpha>0$. Let $M$ be a GOE matrix normalized as in Theorem~\ref{thm:specific-theorem}, and let $v$ be independent of $M$ with density $C_N(\alpha)\exp(-\alpha\|v\|^4/N)$. Then, the likelihood ratio $\mathcal{L}(\lambda_0)$ under the null is given by
\[
\mathcal{L}(\lambda_0) := C_N(\alpha) \int_{\mathbb R^N}
    \exp\left\{\frac{\sqrt{\lambda_0}}{2} \sum_{i,j=1}^N M_{ij}v_iv_j
    - \frac{(\lambda_0 +4\alpha) \| v \|^4}{4N}  \right\}
    \prod_{i=1}^N dv_i.
\]
(Note that the normalization constant $C_N(\alpha) = 2 \pi^{-N/2} \alpha^{N/4} N^{-N/4} \Gamma(N/4)^{-1} \Gamma(N/2)$.) For fixed $0<\lambda_0<4\alpha$, applying Theorem~\ref{thm:specific-theorem} and Stirling's formula gives, under the null,
\[
\log\mathcal{L}(\lambda_0) \Rightarrow \mathcal{N} \left( \frac{1}{4} \log (1-\frac{\lambda_0}{4\alpha}), -\frac{1}{2} \log (1-\frac{\lambda_0}{4\alpha}) \right).
\]
Applying a standard argument (see, e.g., Corollary 5 of \cite{AlaouiJordan2018}), it is also possible to find that the sum of the Type-I error and the Type-II error converges to $\mathrm{erfc} (\frac{1}{4} \sqrt{- \log (1-\frac{\lambda_0}{4\alpha})} )$. More general cases can also be analyzed by applying Theorem \ref{thm:general-theorem}, provided that $M_{ij}$ are Gaussian.

\subsection{Main result}
In this subsection, we state our main result, Theorem \ref{thm:general-theorem}. We begin by precisely defining the soft spherical Sherrington--Kirkpatrick model studied in this paper.

\begin{definition}\label{def:partition-function}
Let $M$ be an $N\times N$ real symmetric matrix and $v = \p{v_1,...,v_N} \in \R^N$. Let $f:\mathbb R_+\to\mathbb R$ be a strongly convex $C^1$ function such that $f(x)/x\to\infty$ as $x\to\infty$.
{
Whenever a complex saddle-point expansion is used, we assume the corresponding local holomorphic extension specified in Theorem~\ref{thm:general-theorem}.
}
We define the partition function $Z_N$ and the associated free energy $F_N$ by
\begin{equation}
    Z_N \equiv Z_N(\beta)=\int_{\mathbb R^N}
    \exp\left\{\beta\sum_{i,j=1}^N M_{ij}v_iv_j
    -\beta N f\left(\frac1N\sum_{i=1}^N v_i^2\right)\right\}
    \prod_{i=1}^N dv_i,
    \qquad
    F_N(\beta)=\frac1N\log Z_N,
\end{equation}
where $\prod_{i=1}^N dv_i$ denotes the Lebesgue measure on $\mathbb R^N$. In the rest of the paper we write $f$ also for its local holomorphic extension.
\end{definition}

Let $\lambda_1\geq\lambda_2\geq\cdots\geq\lambda_N$ denote the eigenvalues of $M$. Due to the rotational symmetry, the free energy depends only on the spectrum of $M$. Following \cite{BaikLee2016}, we prove our main result for any disorder matrix $M$ satisfying certain conditions on its spectrum. These conditions are about the limiting spectral measure, the rigidity of eigenvalues, the asymptotic normality of the linear statistics, and the Tracy--Widom limit of the largest eigenvalue. As shown in \cite[Section 3]{BaikLee2016}, these conditions are satisfied by a large class of random matrices, including Wigner matrices, orthogonal invariant ensembles, and real sample covariance matrices. For general background on these topics, we refer to \cite{AndersonGuionnetZeitouni2010,PasturShcherbina2011}.

We now list the conditions we will assume in this paper. The first is the regular limiting behavior of the empirical spectral measure.
\begin{condition}[Regularity of measure] \label{cond:regularity-measure}
The empirical spectral measure $\nu_N :=\frac{1}{N} \sum^N_{j=1} \delta_{\lambda_j}$ of $M$ converges weakly in probability to a probability measure $\nu$ satisfying the following properties:
    \begin{itemize}
    \item[-] $\nu$ is supported on an interval $[C_{-},C_{+}]$, and its density is positive on $(C_{-},C_{+})$.
    \item[-] $\nu$ is absolutely continuous and $\frac{d\nu}{dx}$ exhibits square-root decay at the upper edge, i.e.,
    \begin{equation}\label{eq:s-nu}
        \frac{d\nu}{dx}(x) = s_\nu\sqrt{C_{+}-x}(1+ O(C_{+}-x)) \;\;\; \mbox{as} \; x\nearrow C_{+},
    \end{equation}
    for some $s_\nu > 0$.
    \end{itemize}
\end{condition}

The limiting measure describes the global distribution of the eigenvalues. For the analysis of free-energy fluctuations, however, we also need a much finer control on the location of individual eigenvalues. This is provided by the rigidity estimate below.

\begin{condition}[Rigidity of eigenvalues] \label{cond:rigidity-eigenvalues}
For a positive integer $k \in [1,N]$, let $\widehat{k} := \min\{k, N+1-k\}$. Let $\xi_k$ be the classical location defined by
    \begin{equation} \label{eq:classical-location}
        \int^\infty_{\xi_k} d\nu = \frac{1}{N}\left(k -\frac{1}{2}\right).
    \end{equation}
Uniformly for $1\le k\le N$,
    \begin{equation} \label{eq:C-gamma-bound}
        |\lambda_k - \xi_k| \prec \widehat{k}^{-1/3}N^{-2/3}.
    \end{equation}
Moreover, there exists a constant $C>1$, independent of $N$, such that
    \begin{equation}
        C^{-1}k^{2/3}N^{-2/3}
        \le C_+-\xi_k
        \le Ck^{2/3}N^{-2/3}.
    \end{equation}
\end{condition}

The rigidity condition ensures that each eigenvalue stays close to its classical location up to a very small error. For generalized Wigner matrices, such eigenvalue rigidity follows from the local semicircle law; see \cite{ErdosYauYin2012}. This strong form of control allows us to approximate sums over eigenvalues by integrals with respect to the limiting measure $\nu$, with a quantitative rate of convergence. In particular, it enables us to obtain precise estimates for linear spectral statistics, such as the resolvent $\sum (z-\lambda_i)^{-1}$ and the logarithmic sum $\sum \log (z-\lambda_i)$, which play a central role in our analysis. The following corollaries make this approximation explicit.

\begin{corollary}[Bulk approximation]
\label{cor:bulk-approximation}
Fix $\delta>0$. Uniformly for $z\ge C_++\delta$,
    \begin{align}
        \left| \frac{1}{N}\sum_{i=1}^N \frac{1}{z-\lambda_i} - \int_{C_{-}}^{C_{+}} \frac{d\nu(x)}{z-x} \right| &= O_\prec(N^{-1}), \\
        \left|\frac1N\sum_{i=1}^N\log(z-\lambda_i) - \int_{C_-}^{C_+}\log(z-x)\,d\nu(x)\right|&=O_\prec(N^{-1}).
    \end{align}
\end{corollary}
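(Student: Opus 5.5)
Both estimates follow the same scheme: compare $\lambda_i$ with the classical location $\xi_i$ using Condition~\ref{cond:rigidity-eigenvalues}, then compare the Riemann-type sum over $\xi_i$ with the integral against $\nu$. Write $g(x)$ for either $(z-x)^{-1}$ or $\log(z-x)$.

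First we fix the domain on which $g$ is controlled. By rigidity, with overwhelming probability every eigenvalue lies in $[C_- - N^{-2/3+\epsilon}, C_+ + N^{-2/3+\epsilon}]$. So for $z\ge C_++\delta$ and $N$ large, all $\lambda_i$ and $\xi_i$ lie in a region where $z-x\ge \delta/2$. On this region $|g'(x)|\le C(\delta)$ uniformly in $z\ge C_++\delta$: for the resolvent $|g'|=(z-x)^{-2}\le 4\delta^{-2}$, and for the logarithm $|g'|=(z-x)^{-1}\le 2\delta^{-1}$. Note that $\log(z-x)$ itself grows as $z\to\infty$, but only derivatives enter the estimates below.

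\textbf{Step 1: eigenvalues to classical locations.} By the mean value theorem and \eqref{eq:C-gamma-bound},
\[
\Bigl|\frac1N\sum_i g(\lambda_i)-\frac1N\sum_i g(\xi_i)\Bigr|
\le \frac{C(\delta)}{N}\sum_{k=1}^N |\lambda_k-\xi_k|
\prec \frac{C(\delta)}{N}\sum_{k=1}^N \widehat{k}^{-1/3}N^{-2/3}.
\]
Since $\sum_{k\le N}\widehat k^{-1/3}\le C N^{2/3}$, the right-hand side is $O_\prec(N^{-1})$. The bound in \eqref{eq:C-gamma-bound} is uniform in $k$ (a simultaneous statement), so the $\prec$ bounds can be summed. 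The bound is also uniform in $z$ because $C(\delta)$ does not depend on $z$.

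\textbf{Step 2: classical locations to the integral.} Define quantiles $\eta_k$ by $\int_{\eta_k}^\infty d\nu = k/N$, with $\eta_0=C_+$ and $\eta_N=C_-$. Then
\[
\int g\,d\nu=\sum_{k=1}^N\int_{\eta_k}^{\eta_{k-1}} g\,d\nu ,
\]
and $\xi_k\in[\eta_k,\eta_{k-1}]$ is the midpoint in $\nu$-mass of its cell. Each interval therefore contributes an error at most $\frac1N\,C(\delta)\,(\eta_{k-1}-\eta_k)$. Summing these, the total error telescopes to
\[
\frac{C(\delta)(C_+-C_-)}{N}=O(N^{-1}).
\]
Here $g$ is smooth on $[C_-,C_+]$ because $z-x\ge\delta$ there, so no endpoint issue arises.

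Combining Steps 1 and 2 gives both estimates, uniformly in $z\ge C_++\delta$.

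The main obstacle is the first step. One needs the rigidity errors to be summable at scale $N^{-1}$, which relies on the $\widehat k^{-1/3}$ improvement away from the edges. One also needs care that $O_\prec$ bounds holding simultaneously for all $k$ and all $z$ combine correctly. Uniformity in $z$ is immediate because the Lipschitz constant is $z$-independent. Alternatively, one can take a union bound over a grid in $z$ and use continuity in $z$.
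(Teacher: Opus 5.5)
Your proposal is correct and follows essentially the same route as the paper. Both arguments use rigidity together with a $z$-uniform Lipschitz bound to replace $\lambda_i$ by $\xi_i$ at cost $O_\prec(N^{-1})$, then compare the sum over $\xi_i$ with the integral via the quantile cells of $\nu$-mass $1/N$, so that the error telescopes to $(C_+-C_-)\|g'\|_\infty/N$.
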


\begin{corollary}[Edge approximation]
\label{cor:edge-approximation}
Let $0<\epsilon<1/3$. Uniformly for $z$ satisfying $ N^{-1 + \epsilon} \leq z - \lambda_1 \prec N^{-2/3}$, we have
    \begin{align}
        \left| \frac{1}{N}\sum_{i=1}^N \frac{1}{z-\lambda_i} - \int_{C_{-}}^{C_{+}} \frac{d\nu(x)}{C_+-x} \right| = o(1).
    \end{align}
Also, let $m$ be any fixed positive integer. Uniformly for $z$ satisfying $N^{-m} \leq z - \lambda_1 \prec N^{-2/3}$, we have
    \begin{align}
        \frac1N\sum_{i=1}^N\log(z-\lambda_i)
        &=\int_{C_-}^{C_+}\log(C_+-x)\,d\nu(x)
        +(z-C_+)\int_{C_-}^{C_+}\frac{d\nu(x)}{C_+-x}
        +O_\prec(N^{-1}).
    \end{align}
\end{corollary}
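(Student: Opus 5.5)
The plan is to write $z-\lambda_1=\eta$, so that $N^{-m}\le \eta \prec N^{-2/3}$ in the log statement and $N^{-1+\epsilon}\le\eta\prec N^{-2/3}$ in the resolvent statement. Each sum is split into the edge indices $1\le i\le K$ and the remaining indices $i>K$, with the cutoff $K=N^{\tau}$ for a small $\tau>0$. Throughout I use Condition \ref{cond:rigidity-eigenvalues} in two ways: $\lambda_1 = C_+ + O_\prec(N^{-2/3})$, and $C_+-\xi_k\asymp k^{2/3}N^{-2/3}$. Together these give $z-\lambda_i \ge \lambda_1-\lambda_i \gtrsim i^{2/3}N^{-2/3}$ with high probability once $i\ge K$, because the rigidity error $\widehat{i}^{-1/3}N^{-2/3+o(1)}$ is then negligible against the spacing.

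\emph{Resolvent statement.} For the edge block, each term satisfies $\frac1N (z-\lambda_i)^{-1}\le \frac1N\eta^{-1}\le N^{-\epsilon}$, so the $K$ terms together contribute at most $N^{\tau-\epsilon}=o(1)$ provided $\tau<\epsilon$. For $i>K$, I compare $\frac1N\sum_{i>K}(z-\lambda_i)^{-1}$ with $\frac1N\sum_{i>K}(C_+-\xi_i)^{-1}$. The difference of the summands is bounded by
\[
\frac{|z-C_+|+|\lambda_i-\xi_i|}{(z-\lambda_i)(C_+-\xi_i)}\prec \frac{N^{-2/3}}{i^{4/3}N^{-4/3}},
\]
so after multiplying by $\frac1N$ and summing over $i>K$ the total error is $\prec N^{-1/3}K^{-1/3}=o(1)$. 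It remains to show that the Riemann sum $\frac1N\sum_{i>K}(C_+-\xi_i)^{-1}$ converges to $\int d\nu(x)/(C_+-x)$. This integral is finite by the square-root decay in Condition \ref{cond:regularity-measure}. Monotonicity of $x\mapsto (C_+-x)^{-1}$ together with the definition \eqref{eq:classical-location} sandwiches the sum between integrals over $[C_-,\xi_{K}]$ up to an $O(N^{-1}\max)$ correction. The missing edge piece $\int_{\xi_K}^{C_+}$ is $O((C_+-\xi_K)^{1/2})=o(1)$. This completes the resolvent statement.

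\emph{Log statement.} The target is sharper here, an $O_\prec(N^{-1})$ error. The edge terms satisfy $\frac1N|\log(z-\lambda_i)|\le \frac{C\log N}{N}$ for $i\le K$, because $\eta\ge N^{-m}$ and $z-\lambda_i$ is bounded above. If $K$ is replaced by $N^{o(1)}$, the total edge contribution is $O_\prec(N^{-1})$. For the bulk part $i>K$, I would Taylor expand around $C_+$:
\[
\log(z-\lambda_i)=\log(C_+-\lambda_i)+\frac{z-C_+}{C_+-\lambda_i}+O\!\left(\frac{(z-C_+)^2}{(C_+-\lambda_i)^2}\right).
\]
The quadratic remainder sums to $\frac1N\sum_{i>K} N^{-4/3}/(i^{4/3}N^{-4/3}) \prec N^{-1}$. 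The linear term is $(z-C_+)$ times the resolvent at $C_+$. By the first part, or more precisely by the rigidity comparison above, that resolvent equals $\int d\nu/(C_+-x)+O_\prec(N^{-1/3})$. Since $z-C_+\prec N^{-2/3}$, this gives the desired linear term with an $O_\prec(N^{-1})$ error.

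\emph{Main obstacle.} The remaining step is to show
\[
\frac1N\sum_{i>K}\log(C_+-\lambda_i)=\int\log(C_+-x)\,d\nu+O_\prec(N^{-1}).
\]
There are two separate sources of error. First, replacing $\lambda_i$ by $\xi_i$ costs $\frac1N\sum_i \frac{|\lambda_i-\xi_i|}{C_+-\xi_i}\prec \frac1N\sum_i \widehat i^{-1/3}i^{-2/3}$, which is $O_\prec(N^{-1})$ up to a $\log N$ factor absorbed by $\prec$. Near the lower edge the analogous bound uses $\widehat i$ and the distance to $C_+$, which is of order one there. Second, the quantile Riemann sum $\frac1N\sum_i g(\xi_i)$ with $g=\log(C_+-\cdot)$ must match $\int g\,d\nu$ to $O(N^{-1})$. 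I would use the midpoint choice $k-\tfrac12$ in \eqref{eq:classical-location}: writing $u=\nu([x,\infty))$, the sum is a midpoint rule for $\int_0^1 g(\xi(u))\,du$, where $g(\xi(u))\approx \tfrac23\log u$ near $u=0$. The midpoint rule for a log singularity has error $O(N^{-1})$, coming from the first cell and a summable second-derivative bound $\sum_k k^{-2}N^{-1}$. The lower edge is handled by smoothness, and the terms $i\le K$ omitted from the sum are restored at $O(N^{-1}\log N)$ cost. This quantile-Riemann-sum accounting near the singular edge is the delicate point. The constant in the $O(N^{-1})$ error must not be lost to logarithmic factors beyond those absorbed by $\prec$.
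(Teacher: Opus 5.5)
Your proposal is correct and is essentially the paper's proof. Both arguments discard the top $N^{o(1)}$ indices crudely, using $z-\lambda_1\ge N^{-1+\epsilon}$ (resp. $\ge N^{-m}$). Both then use rigidity to get $z-\lambda_i\gtrsim i^{2/3}N^{-2/3}$ beyond the cutoff, Taylor-expand the logarithm around the edge, and compare sums over classical locations with integrals by monotonicity.

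There are two minor differences. The paper expands directly around $\log(C_+-\xi_i)$ and puts $(\lambda_i-\xi_i)/(C_+-\xi_i)$ into the remainder $R_i$, instead of first expanding around $\log(C_+-\lambda_i)$. It also separately discards the bottom $N^{3\eta}$ indices, which your uniform treatment of $i>K$ shows is unnecessary.

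The step you call the main obstacle is not delicate, and you should not handle it with the midpoint rule. The function $g(x)=\log(C_+-x)$ is monotone, so the same sandwich you already used for the resolvent applies:
\[
\frac1N g(\xi_{i-1})\le\int_{\xi_i}^{\xi_{i-1}}g\,d\nu\le\frac1N g(\xi_i).
\]
Summed over $i>K$, this telescopes to an error $\frac1N\,|\log(C_+-\xi_N)-\log(C_+-\xi_K)|=O(N^{-1}\log N)$. Since $\log N$ losses are absorbed by $\prec$, this is enough, and it is exactly what the paper does. You do not need a sharp $O(N^{-1})$ constant here.

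The midpoint-rule route also has a real problem under the stated hypotheses. It requires a second-derivative bound for $u\mapsto\log(C_+-\xi(u))$, which amounts to differentiability of the density of $\nu$. Condition~\ref{cond:regularity-measure} does not provide this: it gives only the square-root asymptotics with an unquantified-in-derivative $O(C_+-x)$ error at the upper edge, and mere positivity of the density elsewhere. So that step should be replaced by the monotone comparison above.
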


The third condition is about the linear statistics of the eigenvalues.
\begin{condition}[Linear statistics of the eigenvalues] \label{cond:linear-statistics}
For every function $\varphi: \mathbb{R} \to \mathbb{R}$ that is analytic in an open neighborhood of $[C_-,C_+]$ and has compact support, the random variable
    \begin{equation} \label{eq:linear-statistics}
        \mathcal{N}_\varphi := \sum_i \varphi(\lambda_i) - N\int_{C_{-}}^{C_{+}}\varphi(\lambda)d\nu(\lambda)
    \end{equation}
converges in distribution to a Gaussian random variable. The mean and the variance of this Gaussian random variable are denoted by $\rm{Mean}(\varphi)$ and $\rm{Var}(\varphi)$, respectively, and they depend only on $\varphi$ restricted to the support of $\nu$.
\end{condition}
For Wigner matrices, central limit theorems of this type were established in \cite{LytovaPastur2009}; see also \cite{Johansson1998} for unitary/Hermitian invariant ensembles and \cite{BaiSilverstein2004} for sample covariance matrices. See also Section 3 of \cite{BaikLee2016} for explicit formulas of $\rm{Mean}(\varphi)$ and $\rm{Var}(\varphi)$.

The fourth condition concerns the convergence to the Tracy--Widom distribution of the largest eigenvalue.
\begin{condition}[Tracy--Widom limit of the largest eigenvalue] \label{cond:tracy-widom}
Let $s_\nu$ be the constant appearing in \eqref{eq:s-nu}. The rescaled largest eigenvalue $(s_\nu\pi)^{2/3}N^{2/3}(\lambda_1-C_+)$ converges in distribution to the GOE Tracy--Widom distribution.
\end{condition}
For Wigner matrices, Condition~\ref{cond:tracy-widom} follows under standard tail assumptions from edge universality results such as \cite{LeeYin2014}; the limiting GOE law was introduced in \cite{TracyWidom1996}.

We are now ready to state our main result.

\begin{theorem}[General spectral case]\label{thm:general-theorem}
Suppose that $M$ is an $N\times N$ real symmetric random matrix satisfying Conditions~\ref{cond:regularity-measure}, \ref{cond:rigidity-eigenvalues}, \ref{cond:linear-statistics}, and \ref{cond:tracy-widom}, and that $f:\mathbb R_+\to\mathbb R$ is a strongly convex $C^1$ function such that $\frac{f(x)}{x}\to\infty \text{ as }x\to\infty$. Let $C_+$ be the right edge of the limiting spectral measure $\nu$, and set $\beta_c=\frac12\int_{C_-}^{C_+}\frac{d\nu(x)}{C_+-x}$. For $z\in \R$, let $\tau_z := \operatorname*{arg\,max}_{t \ge 0} \bigl( tz - f(t) \bigr)$ and define $V(z):=z\tau_z-f(\tau_z)$. Denote $\beta^* := \frac{\beta_c}{\tau_{C_+}}$ with the convention that $\beta^* = \infty$ when $\tau_{C_+} = 0$. For each $\beta$, assume moreover that $f$ admits a holomorphic extension to a complex neighborhood of the corresponding radial saddle point: a neighborhood of $\tau_{\widehat\gamma(\beta)}$ when $0<\beta<\beta^*$, and a neighborhood of $\tau_{C_+}$ when $\beta\geq\beta^*$. Then the following hold as $N\to\infty$.

\begin{enumerate}
\item[(i)]
Define
\begin{equation}
    F(\beta)
    =
    \begin{cases}
    \beta V(\widehat\gamma)
    +\frac12\log\left(\frac{\pi}{\beta}\right)
    -\frac12\int_{C_-}^{C_+}\log(\widehat\gamma-x)\,d\nu(x),
    & 0<\beta\leq\beta^*,\\[1ex]
    \beta V(C_+)
    +\frac12\log\left(\frac{\pi}{\beta}\right)
    -\frac12\int_{C_-}^{C_+}\log(C_+-x)\,d\nu(x),
    & \beta>\beta^*,
    \end{cases}
\end{equation}
where, in the first case, $\widehat\gamma \geq C_+$ is the unique solution of
\begin{equation}\label{eq:gammahat-definition}
    \beta\tau_{\widehat\gamma}
    =
    \frac12\int_{C_-}^{C_+}\frac{d\nu(x)}{\widehat\gamma-x}.
\end{equation}
Then
\begin{equation}
    F_N(\beta) \to F(\beta)
\end{equation}
in probability.
\item[(ii)] If $0<\beta<\beta^*$, then
\begin{equation}
    N\bigl(F_N(\beta)-F(\beta)\bigr)
    \Rightarrow
    \mathcal N\bigl(\ell(\beta),\sigma^2(\beta)\bigr),
\end{equation}
where $F(\beta)$ is defined in (i) and
\begin{equation}
    \ell(\beta)
    =
    -\frac12\rm{Mean}(\varphi)
    -
    \frac12\log\left( 1 - \frac{f''(\tau_{\widehat\gamma})}{2\beta} \int^{C_+}_{C_-}\varphi''(x)\,d\nu(x)  \right),
    \qquad
    \sigma^2(\beta)
    =
    \frac14\rm{Var}(\varphi).
\end{equation}
Here, $\operatorname{Mean}$ and $\operatorname{Var}$ denote the mean and variance functionals defined in Condition~\ref{cond:linear-statistics} and $\varphi$ is any compactly supported function that is analytic in a neighborhood of $[C_-,C_+]$ and agrees with $x\mapsto\log(\widehat\gamma-x)$ on that neighborhood.
\item[(iii)]
If $\beta>\beta^*$, then
\begin{equation}
    \frac{(s_\nu \pi N)^{2/3}}{\beta\tau_{C_+}-\beta_c}
    \bigl(F_N(\beta)-F(\beta)\bigr)
    \Rightarrow TW_1.
\end{equation}
Here, $s_\nu$ is defined in Condition~\ref{cond:regularity-measure} and $F(\beta)$ is defined in (i).
\end{enumerate}
\end{theorem}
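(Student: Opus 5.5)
Diagonalize $M$ and write $v$ in its eigenbasis. The Gaussian integral in $v$ is decoupled by introducing the radial variable $t=\|v\|^2/N$ with a Fourier/Laplace representation of $\delta(t-\|v\|^2/N)$. This gives, as in Corollary~\ref{cor:soft-integral-representation},
\[
Z_N=C_N\int_0^\infty dt\, e^{-\beta N f(t)}\frac{1}{2\pi i}\int_{\gamma-i\infty}^{\gamma+i\infty}\exp\Bigl\{N\beta t z-\frac12\sum_{j=1}^N\log\bigl(\beta(z-\lambda_j)\bigr)\Bigr\}dz ,
\]
with $\gamma>\lambda_1$ and an explicit constant $C_N$ producing the $\frac12\log\pi$ terms.

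The exponent, at leading order, is $N\bigl[\beta(tz-f(t))-\frac12\int\log(z-x)\,d\nu(x)\bigr]$. We first perform the $t$-integral, or equivalently treat the double integral by a two-variable saddle point. For fixed real $z$, the $t$-maximum is at $\tau_z$ and gives $\beta V(z)$; the holomorphic extension of $f$ near the saddle lets us push this to complex $z$ near the real axis. Since $V'(z)=\tau_z$ by the envelope theorem, the $z$-saddle equation is exactly \eqref{eq:gammahat-definition}: $\beta\tau_z=\frac12 G(z)$, with $G(z)=\int d\nu(x)/(z-x)$.

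Strong convexity makes $\tau_z$ nondecreasing and $G$ strictly decreasing on $(C_+,\infty)$, so the saddle equation has a unique solution $\widehat\gamma>C_+$ precisely when $\beta\tau_{C_+}<\beta_c$, i.e.\ $\beta<\beta^*$. This gives the phase split. For $\beta<\beta^*$ we move the contour to pass through $\widehat\gamma$. Steepest descent then yields:
\begin{itemize}
\item the leading term $F(\beta)$, where Corollary~\ref{cor:bulk-approximation} justifies replacing $\frac1N\sum_j\log(z-\lambda_j)$ by its $\nu$-integral up to $O_\prec(N^{-1})$;
\item an $O(1)$ random correction $-\frac12\mathcal N_\varphi$ with $\varphi=\log(\widehat\gamma-\cdot)$, whose law comes from Condition~\ref{cond:linear-statistics};
\item a deterministic Gaussian-integral prefactor from the $2\times2$ Hessian in $(t,z)$.
\end{itemize}
That Hessian has determinant proportional to $-\beta^2+\frac{\beta}{2f''(\tau)}\,G'(\widehat\gamma)$, up to sign conventions. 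Using $\int\varphi''\,d\nu=-\int(\widehat\gamma-x)^{-2}d\nu$, this gives the $\log\bigl(1-\frac{f''}{2\beta}\int\varphi''d\nu\bigr)$ term in $\ell(\beta)$. The remaining $N$-dependent constants cancel against $C_N$ via Stirling.

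One subtlety must be handled carefully. The saddle must be located for the random function $z\mapsto\frac1N\sum_j\log(z-\lambda_j)$ rather than for its limit. The fluctuation of the saddle location is $O(N^{-1})$, and it contributes only at second order, so it does not change $\ell$. The boundary case $\tau_{C_+}=0$ corresponds to $\beta^*=\infty$, and then only case (ii) occurs.

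For $\beta>\beta^*$ the $z$-saddle would sit at or below the edge. Following \cite{BaikLee2016}, we place the contour at $z=\lambda_1+N^{-1}\cdot(\text{const})$, or more precisely at the random saddle of $\beta Nt z-\frac12\sum_j\log(z-\lambda_j)$ near $\lambda_1$. There the term $\frac12\log(z-\lambda_1)$ balances the linear term, which puts $z-\lambda_1$ of order $N^{-1}$. The $t$-integral still localizes at $\tau_z\approx\tau_{C_+}$.

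Corollary~\ref{cor:edge-approximation} gives
\[
\frac1N\sum_j\log(z-\lambda_j)=\int\log(C_+-x)\,d\nu+(z-C_+)\,2\beta_c+O_\prec(N^{-1}).
\]
Combining this with $\beta V(z)\approx\beta V(C_+)+\beta\tau_{C_+}(z-C_+)$ gives
\[
F_N=F(\beta)+(\beta\tau_{C_+}-\beta_c)(\lambda_1-C_+)+O_\prec(N^{-1}\log N).
\]
Condition~\ref{cond:tracy-widom} then gives (iii), and part (i) follows in both regimes.

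The main obstacle is the \textbf{rigorous control of the coupled two-variable integral away from the saddle}, especially in the low-temperature regime. There the $z$-contour must approach $\lambda_1$ within $N^{-1}$, while the $t$-integral is coupled to it and the integrand is not jointly log-concave along the complex contour. I would first do the $t$-integral exactly for fixed complex $z$ on the vertical line. On that line $|e^{N\beta tz}|=e^{N\beta t\gamma}$, so Laplace's method in $t$ with real part $\gamma$ yields a bound $e^{N\beta V(\gamma)}$ uniformly. The decay of $\prod_j|z-\lambda_j|^{-1/2}$ in $\operatorname{Im}z$ then controls the tails. Only a small neighborhood needs the holomorphic extension of $f$.
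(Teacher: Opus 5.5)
\textbf{Gap in the low-temperature regime.} Your high-temperature argument is sound and matches the paper's. Integrating $t$ over all of $[0,\infty)$ first, rather than splitting at $t=\beta_c/\beta$, is harmless there, because the saddle is at distance $O(1)$ from the spectrum and the vertical-line tails are $e^{-cN^{2\epsilon}}$-small.

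The gap is for $\beta>\beta^*$. To get (iii) you need $\frac1N\log Z_N$ to precision $o(N^{-2/3})$ in \emph{both} directions. The control you propose (integrate $t$ first, then bound moduli on a vertical line) yields only the upper bound. Take $\operatorname{Re}z=\gamma$ with $\gamma-\lambda_1\asymp N^{-1}$ and factor out the integrand's value at $z=\gamma$.
\begin{itemize}
\item Since $J''(\gamma)\asymp N$, the Gaussian window has width $\asymp N^{-1}$ and contributes $\asymp N^{-1}$.
\item The modulus of the integrand decays only like $(N|\operatorname{Im}z|)^{-1/2}$ until $|\operatorname{Im}z|\sim N^{-2/3}$, since $|R(\gamma+\mathrm{i}y)|\approx R(\gamma)$ for $|y|\ll N^{-1/2}$ and the factors with $i\ge 2$ are $\approx 1$ there.
\item So the modulus of the region $N^{-1}\ll|\operatorname{Im}z|\lesssim N^{-2/3}$ is $\asymp N^{-5/6}$, which exceeds the local term.
\end{itemize}
The vertical-line integral is genuinely oscillatory, and absolute values cannot rule out cancellation.

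Nor can you import Baik--Lee's global steepest-descent contour after integrating $t$ first. The resulting phase $J(z)=2\beta V(z)-\frac1N\sum_i\log(z-\lambda_i)$ exists only where the local holomorphic extension of $f$ does. Your alternative contour, through the saddle of $\beta Ntz-\frac12\sum_j\log(z-\lambda_j)$, depends on $t$ and so is incompatible with doing the $t$-integral first.

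\textbf{What the paper does instead.} The missing idea is positivity at fixed $t$. For $t>\beta_c/\beta$ the paper integrates $z$ first. For each such $t$ it deforms the contour to the curve $\Gamma_t$ on which $\operatorname{Im}G(\cdot;t)=0$ and $\operatorname{Re}G$ decreases. This makes $K_N(t)$ an integral of a positive function, with $K_N(t)\ge N^{-C}$ uniformly for $t\in[\beta_c/\beta+\delta,T]$ and $K_N(t)\le C$ for all $t>\beta_c/\beta$. Only then is Laplace's method applied in $t$, to a positive integrand.

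This order of integration forces the split at $t=\beta_c/\beta$, because for $\beta t<\beta_c$ the fixed-$t$ saddle is not pinned near $\lambda_1$. It also requires separate upper bounds showing that the pieces $t\le\beta_c/\beta$, $t\in[\beta_c/\beta,\beta_c/\beta+\delta]$ and $t\ge T$ are exponentially negligible. Any valid lower bound would close your gap. For example, you could restrict to $|t-\tau_{\lambda_1}|\le N^{-1/2}$, using positivity of the fixed-$t$ spherical integral together with a uniform-in-$t$ lower bound for it. As written, though, your proposal contains no lower-bound mechanism.

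\textbf{Smaller points.} The $(t,z)$ Hessian determinant is $-\beta^2-\frac{\beta f''(\tau_{\widehat\gamma})}{2}\int(\widehat\gamma-x)^{-2}\,d\nu(x)$. Here $f''$ multiplies rather than divides, and only this form gives $\log\bigl(1-\frac{f''}{2\beta}\int\varphi''\,d\nu\bigr)$. Part (i) at $\beta=\beta^*$ also needs an argument; the paper uses convexity of $\beta\mapsto F_N(\beta)$ together with continuity of $F$.
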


Lastly, we introduce notation that will be used throughout the paper.

\begin{definition}[Stochastic domination] \label{def:stochastic-domination}
Let
    \begin{align}
        X = (X^{(N)}(u) : N \in \mathbb{N}, u \in U^{(N)}), Y = (Y^{(N)}(u) : N \in \mathbb{N}, u \in U^{(N)})
    \end{align}
be two families of random variables, where $Y^{(N)}(u)$ are nonnegative and $U^{(N)}$ is a possibly $N$-dependent parameter set. We say that $X$ is stochastically dominated by $Y$, uniformly in $u$, if for all small $\epsilon > 0$ and large $D > 0$, there exists $N_0(\epsilon,D)$ such that
    \begin{align}
        \sup_{u \in U^{(N)}} \mathbb{P}[|X^{(N)}(u)| > N^\epsilon Y^{(N)}(u)] \leq N^{-D}
    \end{align}
for all $N \geq N_0(\epsilon, D)$. If $X$ is stochastically dominated by $Y$, uniformly in $u$, we use the notation $X \prec Y$ or $X = O_\prec (Y)$.
\end{definition}

\begin{definition}[Overwhelming probability]
We say that an $N$-dependent event $\Omega_N$ holds with overwhelming probability if, for any given $D > 0$, there exists $N_0 > 0$ such that
    \begin{align}
        \mathbb{P}(\Omega_N^c) \leq N^{-D}
    \end{align}
for all $N \geq N_0$.
\end{definition}

\subsection{Organization of the paper}

The rest of the paper is organized as follows. In Section~\ref{sec:integral}, we derive an integral representation of the partition function and decompose it into two parts, $Z_N^H(\beta)$ and $Z_N^L(\beta)$, corresponding to the bulk and edge contributions, respectively. In Section~\ref{sec:high}, we analyze the high-temperature regime and prove that the leading contribution comes from $Z_N^H(\beta)$. In Section~\ref{sec:low}, we study the low-temperature regime, where $Z_N^L(\beta)$ gives the leading contribution and the largest eigenvalue governs the fluctuation. In Section~\ref{sec:proof}, we combine these estimates to prove the main results, discuss the regularity of the limiting free energy, and explain the relation with the hard spherical constraint. Proofs of technical estimates are collected in Appendix.

\section{Integral representation} \label{sec:integral}

Our proof follows the steepest descent method (saddle-point method) used by Baik and Lee for the spherical Sherrington--Kirkpatrick model {\cite{BaikLee2016}}. The starting point of their analysis is a contour integral representation of the spherical partition function. We first recall this representation, Lemma 1.3 of \cite{BaikLee2016}, since it will be used as the basic input in our analysis. Here and throughout this paper, the logarithm is taken in the principal branch.

\begin{lemma}\label{lem:integral-representation}
Let $M$ be an $N\times N$ real symmetric matrix with eigenvalues $\lambda_1 \geq \cdots \geq \lambda_N$ and let $d\omega_N$ be the normalized uniform measure on $S_{N-1}:=\{\sigma\in\mathbb R^N:\|\sigma\|^2=N\}$. Then
    \begin{align}
        \int_{S_{N-1}}e^{\beta\langle\sigma,M\sigma\rangle}d\omega_N(\sigma) = C_N\int_{\gamma-\mathrm{i}\infty}^{\gamma+\mathrm{i}\infty}e^{\frac{N}{2}G_0(z)}dz, \nonumber \\
        G_0(z) = 2\beta z - \frac{1}{N}\sum_i \log(z-\lambda_i),
    \end{align}
where $\gamma$ is any constant satisfying $\gamma>\lambda_1$ and the integration contour is the vertical line from $\gamma-\mathrm{i}\infty$ to $\gamma+\mathrm{i}\infty$, and
    \begin{equation}
        C_N = \frac{\Gamma(N/2)}{2\pi \mathrm{i} (N\beta)^{N/2-1}}.
    \end{equation}
Here $\Gamma(z)$ denotes the Gamma function.
\end{lemma}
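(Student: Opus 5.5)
We prove the representation by reducing the spherical integral to a one-dimensional Fourier–Laplace inversion of a Gaussian integral.

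\textbf{Step 1: reduction to the spectrum.} By rotational invariance of $\omega_N$ we diagonalize $M=O^T\Lambda O$ and replace $\sigma$ by $O\sigma$. Then it suffices to treat
\[
I(\beta):=\int_{S_{N-1}}e^{\beta\sum_i\lambda_i\sigma_i^2}\,d\omega_N(\sigma).
\]

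\textbf{Step 2: a Gaussian integral in polar coordinates.} For $\gamma>\lambda_1$ we compute
\[
J(\gamma):=\int_{\mathbb R^N}e^{-\beta\sum_i(\gamma-\lambda_i)x_i^2}\,dx=\prod_i\sqrt{\frac{\pi}{\beta(\gamma-\lambda_i)}}.
\]
Writing $x=r\sigma/\sqrt N$ with $\sigma\in S_{N-1}$, and letting $|S^{N-1}|$ denote the surface area of the unit sphere, we get
\[
J(\gamma)=|S^{N-1}|\int_0^\infty r^{N-1}e^{-\beta\gamma r^2}\,I\!\left(\tfrac{r^2}{N}\right)\,dr,
\]
where $I(s)$ is understood with $\beta$ replaced by $\beta s$, that is, $I(s)=\int e^{\beta s\langle\sigma,\Lambda\sigma\rangle}d\omega_N$. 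Substituting $u=r^2/N$ turns this into a Laplace transform in the variable $\gamma$:
\[
J(\gamma)=\tfrac12|S^{N-1}|N^{N/2}\int_0^\infty u^{N/2-1}e^{-N\beta\gamma u}\,I(u)\,du .
\]
This identity holds for all complex $\gamma$ with $\re\gamma>\lambda_1$, by analytic continuation of both sides; the principal branch of the logarithm gives the square roots on the left.

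\textbf{Step 3: inversion.} The function $g(u)=u^{N/2-1}I(u)$ is continuous and of exponential growth at most $e^{N\beta\lambda_1 u}$. We apply Bromwich inversion in the Laplace variable $p=N\beta\gamma$ and evaluate at $u=1$:
\[
I(1)=\frac{2}{|S^{N-1}|N^{N/2}}\cdot\frac{N\beta}{2\pi\mathrm i}\int_{\gamma-\mathrm i\infty}^{\gamma+\mathrm i\infty}e^{N\beta z}\Bigl(\frac{\pi}{\beta}\Bigr)^{N/2}\prod_i(z-\lambda_i)^{-1/2}\,dz.
\]
The integrand equals $e^{\frac N2 G_0(z)}$ up to constants. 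We then insert $|S^{N-1}|=2\pi^{N/2}/\Gamma(N/2)$ and collect the factors $\pi^{N/2}$, $\beta^{-N/2}$, $N\beta$ and $N^{-N/2}$. This yields
\[
\frac{\Gamma(N/2)}{2\pi\mathrm i}\,(N\beta)^{1-N/2},
\]
which is exactly $C_N$.

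\textbf{Main obstacle: justifying the inversion.} For $N\ge 3$ the integrand decays like $|z|^{-N/2}$ along the vertical line, so the integral converges absolutely and no principal-value interpretation is needed. Pointwise validity at $u=1$ follows because $g$ is smooth in $u$: $I$ is analytic in its parameter. Independence of $\gamma$ follows from Cauchy's theorem, since the integrand is analytic on $\re z>\lambda_1$ and decays at infinity. The small cases $N\le 2$ are not needed asymptotically.
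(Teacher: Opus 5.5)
Your argument is correct and is essentially the paper's proof: both evaluate a Gaussian integral coordinatewise and in polar coordinates, recognize $\Gamma(N/2)\prod_i(z-\lambda_i)^{-1/2}$ (up to constants) as the Laplace transform of $t^{N/2-1}$ times the spherical integral at effective parameter $t$, and then apply Bromwich inversion. The only difference in route is a rescaling: you invert in $p=N\beta z$ and evaluate at $u=1$, while the paper inverts in $z$ and evaluates at $t=\beta N$.

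Your justification of the inversion (absolute convergence for $N\ge 3$, pointwise validity from smoothness of $g$, $\gamma$-independence by Cauchy) is more careful than the paper's, which simply invokes the inverse Laplace transform. One small point: the lemma is stated for all $N$, so $N\le 2$ should not be set aside. There the vertical integral converges conditionally, as $\lim_{R\to\infty}\int_{\gamma-\mathrm{i}R}^{\gamma+\mathrm{i}R}$, and classical Bromwich inversion at the smooth point $u=1$ still applies, which covers those cases too.
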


The proof of Lemma \ref{lem:integral-representation} can be found in Section 4 of \cite{BaikLee2016}; for completeness, we also provide it in Appendix~\ref{app:integral-representation}. This gives the following representation of the partition function.

\begin{corollary}\label{cor:soft-integral-representation}
The partition function $Z_N$ of the soft SSK model admits the following representation. For any $\gamma>\lambda_1$,
\begin{align} \label{eq:partition-function}
     Z_N
    &=
    \int_{\mathbb R^N}
    e^{\beta\left(\sum_{i,j=1}^N M_{ij}v_i v_j
    -
    Nf\left(\frac1N\sum_{i=1}^N v_i^2\right)\right)}
    \prod_{i=1}^N dv_i \nonumber  \\
    &=
    -\frac{\mathrm{i} N}{2}
    \left(\frac{\pi}{\beta}\right)^{\frac N2-1}
    \int_0^\infty
    e^{-\beta N f(t)}
    \int_{\gamma-\mathrm{i}\infty}^{\gamma+\mathrm{i}\infty}
    e^{\frac N2 G(z;t)}\,dz\,dt,
\end{align}
where
\[
G(z;t) = 2\beta t z - \frac1N\sum_{i=1}^N \log(z-\lambda_i).
\]
\end{corollary}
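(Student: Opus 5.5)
The plan is to reduce the soft model to the hard spherical one by integrating in polar coordinates, and then to apply Lemma~\ref{lem:integral-representation} on each sphere, with the inverse temperature rescaled by the squared radius. No interchange of the $z$- and $t$-integrals will be needed.

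\textbf{Step 1: polar coordinates.} Write $v=\sqrt{t}\,\sigma$ with $\sigma\in S_{N-1}$, so that $\|v\|^2=Nt$ and $t\ge 0$. Let $|S^{N-1}|=2\pi^{N/2}/\Gamma(N/2)$ be the surface area of the unit sphere. In polar coordinates the Lebesgue measure is $dv=|S^{N-1}|\rho^{N-1}\,d\rho\,d\omega_N(\sigma)$ with $\rho=\|v\|=\sqrt{Nt}$, where $\omega_N$ is the normalized uniform measure. Since $d\rho=\sqrt N\,dt/(2\sqrt t)$, this gives $\rho^{N-1}d\rho=\tfrac12 N^{N/2}t^{N/2-1}\,dt$. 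The integrand is nonnegative, so Tonelli's theorem gives
\[
Z_N=\frac{2\pi^{N/2}}{\Gamma(N/2)}\cdot\frac{N^{N/2}}{2}\int_0^\infty t^{N/2-1}e^{-\beta Nf(t)}\int_{S_{N-1}}e^{\beta t\langle\sigma,M\sigma\rangle}\,d\omega_N(\sigma)\,dt .
\]
The left-hand side is finite because $f(t)/t\to\infty$: the quadratic term is at most $\beta\lambda_1 Nt$, which is dominated by the confinement.

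\textbf{Step 2: spherical integral for fixed $t>0$.} For each fixed $t>0$, I apply Lemma~\ref{lem:integral-representation} with $\beta$ replaced by $\beta t>0$. The set $t=0$ has measure zero and can be ignored. The lemma gives
\[
\int_{S_{N-1}}e^{\beta t\langle\sigma,M\sigma\rangle}\,d\omega_N=\frac{\Gamma(N/2)}{2\pi\mathrm i\,(N\beta t)^{N/2-1}}\int_{\gamma-\mathrm i\infty}^{\gamma+\mathrm i\infty}e^{\frac N2 G(z;t)}\,dz,
\]
because $G_0$ with $\beta t$ in place of $\beta$ is exactly $G(z;t)$. The contour condition $\gamma>\lambda_1$ does not depend on $t$, so a single $\gamma$ works for every $t$. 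The inner integral converges absolutely for $N\ge3$, since $|e^{\frac N2G}|\asymp|z|^{-N/2}$ along the vertical line.

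\textbf{Step 3: constants.} Substituting Step 2 into Step 1, the factors $t^{N/2-1}$ cancel and so do the factors $\Gamma(N/2)$. The remaining constant is
\[
\frac{\pi^{N/2}N^{N/2}}{2\pi\mathrm i\,N^{N/2-1}\beta^{N/2-1}}=\frac{N}{2\mathrm i}\Bigl(\frac{\pi}{\beta}\Bigr)^{N/2-1}=-\frac{\mathrm iN}{2}\Bigl(\frac{\pi}{\beta}\Bigr)^{N/2-1},
\]
which gives \eqref{eq:partition-function}. The outer $t$-integral is understood as the iterated integral of the (real, positive) inner values, so its convergence is inherited from Step 1 and no Fubini argument in $(z,t)$ is required.

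The only point needing care is the change of variables constant in Step 1, together with checking that the lemma applies uniformly in $t$; the latter holds because $\gamma$ depends only on $\lambda_1$.
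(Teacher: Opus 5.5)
Your proposal is correct and follows the paper's own proof. The paper makes the same polar decomposition $v=\sqrt t\,\sigma$ with Jacobian $\frac{\pi^{N/2}N^{N/2}}{\Gamma(N/2)}t^{N/2-1}$ and then applies Lemma~\ref{lem:integral-representation} at inverse temperature $\beta t$. Your extra remarks only make explicit what the paper leaves implicit: Tonelli for the nonnegative integrand, finiteness from $f(t)/t\to\infty$, absolute convergence of the contour integral for $N\ge 3$, and reading the $t$-integral as an iterated integral of positive inner values.
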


\begin{proof}
We use the polar decomposition
\[
v=\sqrt{t}\,\sigma,\qquad t=\frac{\|v\|^2}{N},\qquad \sigma\in S_{N-1}.
\]
Then
\[
\langle v,Mv\rangle=t\langle \sigma,M\sigma\rangle \qquad \text{and} \qquad dv=C_N^{\mathrm{Leb}}t^{\frac N2-1}\,dt\,d\omega_N(\sigma),
\]
where $C_N^{\mathrm{Leb}}=\frac{\pi^{N/2}N^{N/2}}{\Gamma(N/2)}$. Hence
\[
Z_N(\beta) = C_N^{\mathrm{Leb}} \int_0^\infty t^{\frac N2-1}e^{-\beta Nf(t)} \left( \int_{S_{N-1}} e^{\beta t\langle \sigma,M\sigma\rangle} d\omega_N(\sigma) \right)dt.
\]
Applying Lemma~\ref{lem:integral-representation} to the inner spherical integral with inverse temperature $\beta t$ gives the result.
\end{proof}

For each fixed value of the radial variable $t$, the contour integral in \eqref{eq:partition-function} has the same form as the contour representation for the spherical model, with inverse temperature $\beta$ replaced by the effective inverse temperature $\beta t$. In the steepest descent analysis of Baik--Lee~\cite{BaikLee2016}, the comparison between the inverse temperature $\beta$ and the critical inverse temperature $\beta_c$ defined by
\begin{equation}\label{eq:beta-c}
    \beta_c := \frac12 \int_{C_-}^{C_+} \frac{d\nu(k)}{C_+ - k}
\end{equation}
determines whether the main contribution comes from the spectral bulk or from the spectral edge. Note that $\beta_c>0$ is well defined from Condition \ref{cond:regularity-measure}. In the region $0<\beta<\beta_c$, the saddle point stays away from the upper edge $C_+$ implying that the contribution comes from the spectral bulk. In contrast, in the region $\beta>\beta_c$ the dominant contribution is expected to come from the vicinity of the largest eigenvalue.

In the present model, since $t$ ranges over $(0,\infty)$, the effective inverse temperature $\beta t$ ranges from $0$ to $\infty$. Hence both regions may appear in the radial integral, and it is natural to split the integral at the value of $t$ for which $\beta t$ reaches the critical inverse temperature $\beta_c$, i.e., $t=\frac{\beta_c}{\beta}$. We therefore split the partition function into two parts,
\[
Z_N(\beta)=Z_N^H (\beta)+Z_N^L(\beta),
\]
where
\begin{equation} \label{eq:H-N}
    Z_N^H(\beta)
    =
    -\frac{\mathrm{i}N}{2}
    \left(\frac{\pi}{\beta}\right)^{N/2-1}
    \int_0^{\beta_c/\beta}
    e^{-\beta N f(t)}
    \left(
        \int_{\gamma-\mathrm{i}\infty}^{\gamma+\mathrm{i}\infty}
        e^{\frac{N}{2}G(z;t)}\,dz
    \right)dt,
\end{equation}
and
\begin{equation} \label{eq:L-N}
    Z_N^L(\beta)
    =
    -\frac{\mathrm{i}N}{2}
    \left(\frac{\pi}{\beta}\right)^{N/2-1}
    \int_{\beta_c/\beta}^{\infty}
    e^{-\beta N f(t)}
    \left(
        \int_{\gamma-\mathrm{i}\infty}^{\gamma+\mathrm{i}\infty}
        e^{\frac{N}{2}G(z;t)}\,dz
    \right)dt.
\end{equation}
Here $\gamma>\lambda_1$ is chosen so that the contour lies to the right of the spectrum. Consequently, the free energy can be written as
\begin{equation} \label{eq:free-energy-split}
    F_N(\beta)
    =
    \frac1N \log\left[Z_N^H(\beta)+Z_N^L(\beta)\right].
\end{equation}

The main question of the paper is to determine which of the two contributions, $Z_N^H(\beta)$ or $Z_N^L(\beta)$, gives the leading contribution to the partition function. Since the radial variable $t$ is effectively restricted by the function $f$, the comparison between the bulk and edge contributions is governed by the position of the threshold $\beta_c/\beta$ relative to the critical radial scale $\tau_{C_+}=\operatorname*{arg\,max}_{t\ge0}\bigl(tC_+-f(t)\bigr)$. Therefore, we separate the analysis according to whether
\[
\beta \tau_{C_+} < \beta_c \qquad \text{or} \qquad \beta \tau_{C_+} > \beta_c.
\]
We denote the critical inverse temperature for the soft SSK model by
\[
\beta^* := \frac{\beta_c}{\tau_{C_+}},
\]
with the convention that $\beta^*=+\infty$ when $\tau_{C_+}=0$. When $\tau_{C_+}>0$, the critical inverse temperature $\beta^*$ is finite, and at $\beta=\beta^*$ the point $t=\tau_{C_+}$ coincides with the transition point $\beta_c/\beta$ while the saddle point reaches the spectral edge. Below this threshold, the main contribution comes from the bulk saddle. Above it, the leading contribution is governed by the edge and, ultimately, by the largest eigenvalue.

\section{High-temperature regime, $\beta \in (0,\beta^*)$} \label{sec:high}
\subsection{Estimate of $Z_N^H(\beta)$ for $\beta\in(0,\beta^*)$}

In this section, we estimate the bulk contribution $Z_N^H(\beta)$. Recall that
\[
\beta_c := \frac12\int_{C_-}^{C_+}\frac{d\nu(k)}{C_+-k},
\]
and that $\tau_{C_+} = \operatorname*{arg\,max}_{t \ge 0} \bigl( tC_+ - f(t) \bigr)$. We assume $0 < \beta < \beta^*$. Under this condition, we show that the main contribution to $Z_N^H(\beta)$ comes from a saddle point located strictly to the right of the spectral edge $C_+$.

We first rewrite $Z_N^H(\beta)$ in a form suitable for the steepest descent analysis. From the contour representation obtained in Section~\ref{sec:integral}, for any $\gamma>\lambda_1$,
\[
Z_N^H(\beta) = -\frac{\mathrm{i}N}{2} \left(\frac{\pi}{\beta}\right)^{N/2-1} \int_0^{\beta_c/\beta} e^{-\beta N f(t)} \left( \int_{\gamma-\mathrm{i}\infty}^{\gamma+\mathrm{i}\infty} e^{\frac N2G(z;t)}\,dz \right)dt,
\]
where $ G(z;t) = 2\beta tz - \frac1N\sum_{i=1}^N\log(z-\lambda_i) $. Equivalently,
\[
e^{-\beta N f(t)}e^{\frac N2G(z;t)} = \exp\left\{ -\frac12\sum_{i=1}^N\log(z-\lambda_i) \right\} \exp\{N\beta(tz-f(t))\}.
\]
Since $t\in[0,\beta_c/\beta]$ is restricted to a compact interval and $\gamma>\lambda_1$, the integrand is absolutely integrable along the vertical contour. Indeed, for $z=\gamma+\mathrm{i}y$,
\[
\prod_{i=1}^N |z-\lambda_i|^{-1/2} \leq C(1+|y|)^{-N/2}
\]
for large $|y|$. Hence, by Fubini's theorem, we may interchange the $t$-integral and the $z$-integral. We obtain
\begin{equation} \label{eq:H-Fubini-form}
    Z_N^H(\beta)
    =
    -\frac{\mathrm{i}N}{2}
    \left(\frac{\pi}{\beta}\right)^{N/2-1}
    \int_{\gamma-\mathrm{i}\infty}^{\gamma+\mathrm{i}\infty}
    \exp\left\{
        -\frac12\sum_{i=1}^N\log(z-\lambda_i)
    \right\}
    R_N(z)\,dz,
\end{equation}
where
\begin{equation} \label{eq:R-N-def}
    R_N(z)
    :=
    \int_0^{\beta_c/\beta}
    \exp\{N\beta(tz-f(t))\}\,dt.
\end{equation}

For fixed real $z$, let
\[
\tau_z:=\operatorname*{arg\,max}_{t\geq0}\{tz-f(t)\},
\]
which is uniquely defined since $f$ is strongly convex, and set
\[
V(z):=z\tau_z-f(\tau_z), \qquad z\in\mathbb R.
\]
Since the integral defining $R_N(z)$ is truncated at $\beta_c/\beta$, the maximizer of its exponent is $\tau_z$ only when $\tau_z<\frac{\beta_c}{\beta}$.
In particular, at the deterministic high-temperature saddle point,
\[
\beta\tau_{\widehat\gamma_J}
=
\frac12\int_{C_-}^{C_+}\frac{d\nu(k)}{\widehat\gamma_J-k}
<
\frac12\int_{C_-}^{C_+}\frac{d\nu(k)}{C_+-k}
=
\beta_c,
\]
and therefore $\tau_{\widehat\gamma_J}<\beta_c/\beta$. By continuity of $z\mapsto\tau_z$, the same inequality holds in a fixed real neighborhood of $\widehat\gamma_J$. Only in this neighborhood do we use Laplace's method in the form
\[
R_N(z)=e^{N\beta V(z)}\times\text{(subexponential factor)}.
\]
Outside this neighborhood we keep the original truncated integral $R_N(z)$ rather than replacing it by $e^{N\beta V(z)}$.
Combining this local exponential contribution with the spectral factor in \eqref{eq:H-Fubini-form} leads to the random phase function
\[
J(z):= 2\beta V(z) -\frac1N\sum_{i=1}^N\log(z-\lambda_i), \qquad z>\lambda_1,
\]
which gives
\[
Z_N^H(\beta) \approx -\frac{\mathrm{i}N}{2} \left(\frac{\pi}{\beta}\right)^{N/2-1} \int_{\gamma-\mathrm{i}\infty}^{\gamma+\mathrm{i}\infty} \exp\left\{\frac{N}{2}J(z) \right\} \,dz
\]

The last display is only heuristic, since $V$ and $J$ are at this point defined only for real $z>C_+$. It is meant only to identify the expected exponential phase of the contour integral. After locating the real saddle point, we will extend $V$ and $J$ holomorphically to a small complex neighborhood of that point.

Note that $J$ is random since it depends on the empirical spectral measure of $M$, and hence its saddle point is also random. To locate and analyze this random saddle point, we first introduce the deterministic counterpart of $J$, obtained by replacing the empirical spectral measure $\nu_N$ with its limiting measure $\nu$:
\begin{equation} \label{eq:Jhat-def}
\widehat J(z):=
2\beta V(z)-\int_{C_-}^{C_+}\log(z-k)\,d\nu(k),
\qquad z>C_+.
\end{equation}


\begin{lemma}[Saddle point] \label{lem:critical-point-Jhat}
Assume $0 < \beta < \beta^* $. Then there exist unique critical points $\gamma_J \in (\lambda_1,\infty)$, $\widehat\gamma_J\in(C_+,\infty)$ of $J$, $\widehat J$ respectively.
\end{lemma}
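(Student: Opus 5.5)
The plan is to compute derivatives of $V$ and reduce both critical point equations to monotonicity statements. First, $V(z)=\sup_{t\ge0}(tz-f(t))$ is the Legendre transform of $f$ restricted to $[0,\infty)$. Since $f$ is strongly convex and $f(t)/t\to\infty$, the maximizer $\tau_z$ exists, is unique, and depends continuously on $z$. Moreover, $z\mapsto\tau_z$ is nondecreasing: it equals $0$ for $z\le f'(0)$ and equals $(f')^{-1}(z)$ for $z>f'(0)$, and the latter is strictly increasing and Lipschitz because $f'$ is strictly increasing with $f'(s)-f'(t)\ge c(s-t)$. By the envelope theorem (Danskin), $V$ is $C^1$ with $V'(z)=\tau_z$. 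Hence, for real $z>\lambda_1$ and $z>C_+$ respectively,
\[
\tfrac12 J'(z)=\beta\tau_z-\frac{1}{2N}\sum_{i=1}^N\frac{1}{z-\lambda_i},\qquad \tfrac12\widehat J'(z)=\beta\tau_z-\frac12\int_{C_-}^{C_+}\frac{d\nu(k)}{z-k}.
\]
In each expression the first term is nondecreasing and the second term is strictly decreasing in $z$, so each derivative is strictly increasing. Each function therefore has at most one critical point, and the critical point is a minimum along the real axis, as expected for a steepest-descent contour.

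For existence we use the intermediate value theorem. For $J$: as $z\downarrow\lambda_1$, the sum blows up like $(z-\lambda_1)^{-1}$ while $\tau_z$ stays bounded, so $J'\to-\infty$. As $z\to\infty$, the resolvent term tends to $0$ while $\tau_z\to\infty$ (since $f'$ is unbounded by strong convexity), so $J'>0$ eventually. Hence there is a unique $\gamma_J\in(\lambda_1,\infty)$. For $\widehat J$ the behaviour at infinity is the same. At $z\downarrow C_+$, the integral converges to $2\beta_c$ (finite by Condition~\ref{cond:regularity-measure}), so by monotone convergence and continuity of $\tau$,
\[
\tfrac12\widehat J'(C_+^+)=\beta\tau_{C_+}-\beta_c<0
\]
exactly when $\beta<\beta^*=\beta_c/\tau_{C_+}$. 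This includes the case $\tau_{C_+}=0$, where the value is $-\beta_c<0$. Strict monotonicity then gives a unique zero $\widehat\gamma_J\in(C_+,\infty)$, which coincides with $\widehat\gamma$ in \eqref{eq:gammahat-definition}.

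The main subtlety is the boundary behaviour at $C_+$. One must check that the integral $\int d\nu/(z-k)$ is continuous up to $z=C_+$, which follows from the square-root decay and monotone convergence, and that the strict inequality $\beta<\beta^*$ is exactly what excludes a critical point at the edge. A smaller point is that $V$ is differentiable even where $\tau_z$ hits the constraint $t=0$. This is handled by the envelope argument together with the explicit formula $V(z)=-f(0)$ for $z\le f'(0)$. If one wants more than the statement asks, Corollary~\ref{cor:bulk-approximation} and the strict positivity of $\widehat J''(\widehat\gamma_J)$ should also show that $\gamma_J$ lies near $\widehat\gamma_J$ with high probability, but this is not needed here.
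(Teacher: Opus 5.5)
Your proof is correct and follows the same route as the paper: $J'$ and $\widehat J'$ are strictly increasing (a nondecreasing $\tau_z$ term minus a strictly decreasing resolvent term), and each changes sign, via $\widehat J'(C_+^+)=2(\beta\tau_{C_+}-\beta_c)<0$, $J'\to-\infty$ as $z\downarrow\lambda_1$, and $\tau_z\to\infty$ at infinity. Your extra justifications (the envelope theorem for $V'=\tau_z$, monotone convergence at the edge, and the case $\tau_{C_+}=0$) fill in details the paper leaves implicit.
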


\begin{proof}
We first check $\widehat{\gamma}_J$. For $x>C_+$, differentiating \eqref{eq:Jhat-def} gives
\begin{equation} \label{eq:Jhat-prime}
    \widehat J'(x)
    =
    2\beta\tau_x
    -
    \int_{C_-}^{C_+}\frac{d\nu(k)}{x-k}.
\end{equation}
By the definition of $\beta_c$,
\[
\lim_{x\downarrow C_+} \int_{C_-}^{C_+}\frac{d\nu(k)}{x-k} = \int_{C_-}^{C_+}\frac{d\nu(k)}{C_+-k} = 2\beta_c.
\]
Therefore $ \widehat J'(C_+) = 2\beta\tau_{C_+}-2\beta_c <0, $ because $\beta\tau_{C_+}<\beta_c$.

On the other hand, since $f$ is strongly convex and $f(x)/x\to\infty$ as $x\to\infty$, we have $\tau_x\to\infty$ as $x\to\infty$. Also,
\[
\int_{C_-}^{C_+}\frac{d\nu(k)}{x-k}=O(x^{-1}) \qquad \text{as } x\to\infty.
\]
Hence $\widehat J'(x)>0$ for all sufficiently large $x$.

It remains to prove uniqueness. Since $f$ is convex, the maximizer $x\mapsto\tau_x$ is nondecreasing. On the other hand, $ x\longmapsto \int_{C_-}^{C_+}\frac{d\nu(k)}{x-k}$ is strictly decreasing on $(C_+,\infty)$. Hence
\[
\widehat J'(x)
=
2\beta\tau_x-\int_{C_-}^{C_+}\frac{d\nu(k)}{x-k}
\]
is strictly increasing on $(C_+,\infty)$. Together with the sign change above, this proves the existence and uniqueness of $\widehat\gamma_J$.

The same monotonicity argument applies to the random phase. Indeed,
\[
J'(x) = 2\beta \tau_x - \frac{1}{N}\sum_{i=1}^N \frac{1}{x-\lambda_i},
\]
where the first term is nondecreasing and the second term is the negative of a strictly decreasing function. Thus $J'$ is strictly increasing on $(\lambda_1,\infty)$. Since
\[
\lim_{x\downarrow \lambda_1} J'(x) < 0, \qquad \lim_{x\to\infty} J'(x) > 0,
\]
there exists a unique $\gamma_J \in (\lambda_1,\infty)$.
\end{proof}

We note that $\widehat{\gamma}_J$ is the same quantity denoted by $\widehat{\gamma}$ in Theorem~\ref{thm:general-theorem}. We now localize the analysis near the deterministic saddle point. Choose a small constant $\eta>0$ such that
\[
\eta<\frac12(\widehat\gamma_J-C_+),
\]
and set
\[
U_\eta:=\{z\in\mathbb C: |z-\widehat\gamma_J|<\eta\}.
\]
Since
\[
f'(\tau_{\widehat\gamma_J})=\widehat\gamma_J \qquad \text{and} \qquad f''(\tau_{\widehat\gamma_J})>0,
\]
the complex inverse function theorem implies, after possibly decreasing $\eta$, that the equation
\[
f'(\tau_z)=z
\]
has a holomorphic solution $z\mapsto \tau_z$ on $U_\eta$. Consequently,
\[
V(z):=z\tau_z-f(\tau_z), \qquad z \in U_\eta
\]
is also holomorphic. Moreover,
\[
V'(z)=\tau_z, \qquad V''(z)=\frac{1}{f''(\tau_z)}.
\]

Only in this local neighborhood do we rewrite
\[
R_N(z) = e^{N\beta V(z)}I_N(z),
\]
where
\begin{equation} \label{eq:I-N-def}
    I_N(z)
    :=
    e^{-N\beta V(z)}
    \int_0^{\beta_c/\beta}
    e^{N\beta(tz-f(t))}\,dt.
\end{equation}
Thus, for $z\in U_\eta$, the integrand in \eqref{eq:H-Fubini-form} can be written in the local steepest-descent form
\[
\exp\left\{\frac N2 J(z)\right\}I_N(z),
\]
where
\begin{equation} \label{eq:J-N-def}
    J(z)
    =
    2\beta V(z)
    -
    \frac1N\sum_{i=1}^N\log(z-\lambda_i), \qquad z\in U_\eta.
\end{equation}
For convenience, we use the same notation $V$ and $J$ for their local holomorphic extensions to $U_\eta$.

Outside $U_\eta$, however, we do not use this representation. Instead, we keep the original expression as in \eqref{eq:H-Fubini-form}, where the integrand is given by $\exp\left\{-\frac12\sum_{i=1}^N\log(z-\lambda_i)\right\} R_N(z)$. This distinction is important because the holomorphic function $V(z)$ is only needed, and only defined, in a fixed neighborhood of the saddle point.

To carry out the steepest descent analysis for the random integral, we need to compare $\gamma_J$ with the deterministic point $\widehat\gamma_J$ and to control the derivatives of $J$ in a neighborhood of the saddle point. The next lemma records the estimates needed to stabilize the random saddle.

\begin{lemma} \label{lem:J-derivative-estimates}
Let $0 < \beta < \beta^* $. Then the following statements hold with overwhelming probability.

\begin{enumerate}
    \item[(i)] Fix $\delta>0$. Uniformly for real $x\ge C_+ +\delta$,
    \[
J'(x)-\widehat J'(x)\prec N^{-1}.
    \]

    \item[(ii)] Let $U_\eta$ be the neighborhood of $\widehat\gamma_J$ on which $\tau_z$ and $V(z)=z\tau_z-f(\tau_z)$ are holomorphic.
For each fixed integer $\ell\ge 2$ and each compact set $K\subset U_\eta$, we have
    \[
J^{(\ell)}(z)=O(1)
    \]
uniformly for $z\in K$.
\end{enumerate}
\end{lemma}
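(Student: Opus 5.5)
For part (i), I would write out both derivatives explicitly. For real $x>\lambda_1$ (and $x\ge C_++\delta$ lies to the right of $\lambda_1$ with overwhelming probability, by rigidity in Condition~\ref{cond:rigidity-eigenvalues}), we have
\[
J'(x)-\widehat J'(x)=-\Bigl(\frac1N\sum_{i=1}^N\frac{1}{x-\lambda_i}-\int_{C_-}^{C_+}\frac{d\nu(k)}{x-k}\Bigr).
\]
The term $2\beta\tau_x$ is the same in both and cancels. This holds because $V'(x)=\tau_x$ for real $x$: by the envelope theorem, $f$ is $C^1$ and strongly convex, so $\tau_x$ is continuous and the maximum is attained in the interior or at $0$, where the derivative of the boundary term vanishes in either case. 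The difference is then exactly the first estimate of Corollary~\ref{cor:bulk-approximation}. That corollary is uniform for $z\ge C_++\delta$, so (i) follows directly. I would also note that stochastic domination uniformly in $x$ on the noncompact half-line is exactly what the corollary provides. If one worries about large $x$, both terms are $O(x^{-2})$ differences, so the same bound persists.

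For part (ii), I would differentiate $J$ on $U_\eta$:
\[
J^{(\ell)}(z)=2\beta V^{(\ell)}(z)+(-1)^{\ell}(\ell-1)!\,\frac1N\sum_{i=1}^N(z-\lambda_i)^{-\ell}.
\]
The first term is deterministic and holomorphic on $U_\eta$, hence bounded on the compact set $K$. Concretely, $V''=1/f''(\tau_z)$, and the higher derivatives are rational in $f^{(j)}(\tau_z)$ with denominator a power of $f''(\tau_z)$. The denominator stays away from zero on $U_\eta$ after shrinking $\eta$, by continuity from $f''(\tau_{\widehat\gamma_J})>0$. For the spectral term, the choice $\eta<\frac12(\widehat\gamma_J-C_+)$ gives $\operatorname{dist}(K,[C_-,C_+])\ge\frac12(\widehat\gamma_J-C_+)=:2c>0$. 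On the event $\lambda_1\le C_++c$, which holds with overwhelming probability by Condition~\ref{cond:rigidity-eigenvalues} applied with $k=1$, we get $|z-\lambda_i|\ge c$ for all $i$ and all $z\in K$. Hence $|\frac1N\sum_i(z-\lambda_i)^{-\ell}|\le c^{-\ell}$, and the bound is uniform on $K$.

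The only delicate point is making sure the event $\lambda_1<C_++c$ holds with overwhelming probability, which rigidity gives since $|\lambda_1-\xi_1|\prec N^{-2/3}$ and $\xi_1\le C_+$. A secondary point is that the holomorphic $V$ on $U_\eta$ is well defined, with bounded derivatives, only after shrinking $\eta$. I would state this explicitly rather than expect any real obstacle: the whole lemma reduces to Corollary~\ref{cor:bulk-approximation} plus a deterministic separation from the spectrum.
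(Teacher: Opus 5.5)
Your proposal is correct and follows the paper's proof essentially step for step. Part (i) is the cancellation of $2\beta\tau_x$ followed by the resolvent estimate of Corollary~\ref{cor:bulk-approximation}. Part (ii) combines the explicit formula for $J^{(\ell)}$, the boundedness of the holomorphic $V^{(\ell)}$ on $K$, and a uniform separation of $K$ from the spectrum on the overwhelming-probability event $\lambda_1\le C_++c$. Your extra details only make explicit what the paper leaves implicit: the identity $V'(x)=\tau_x$ for all real $x$ (by Danskin/envelope), and the concrete choice of $c$ from $\eta<\frac12(\widehat\gamma_J-C_+)$.
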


\begin{proof}
Part (i) follows directly from Corollary~\ref{cor:bulk-approximation}, and part (ii) follows from holomorphicity of $V$ and the positive distance between $U_\eta$ and the spectrum.  Details are given in Appendix~\ref{app:high-temp-technical}.
\end{proof}

We now compare the random critical point $\gamma_J$ with the deterministic critical point $\widehat\gamma_J$. Since we are in the high-temperature regime, $\widehat\gamma_J$ is separated from the spectral edge $C_+$. Hence $\widehat J''(\widehat\gamma_J)>0$, and the critical point is stable under small perturbations of the phase function. Using the estimate $J'-\widehat J'\prec N^{-1}$ from Lemma~\ref{lem:J-derivative-estimates}, we obtain the following approximation.

\begin{lemma} \label{lem:gamma-approximation}
Let $0 < \beta < \beta^*$. Then
    \begin{equation} \label{eq:gamma-J-approximation}
        |\gamma_J - \widehat{\gamma}_J| \prec N^{-1},
    \end{equation}
with overwhelming probability. In particular, there exists a constant $c>0$ independent of $N$ such that
    \begin{equation} \label{eq:gamma-J-edge-gap}
        \gamma_J - \lambda_1 > c
    \end{equation}
with overwhelming probability.
\end{lemma}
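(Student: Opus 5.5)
The plan is a one-dimensional perturbation argument for the monotone function $J'$ on the real line. Both $\gamma_J$ and $\widehat\gamma_J$ are zeros of strictly increasing functions by Lemma~\ref{lem:critical-point-Jhat}, and Lemma~\ref{lem:J-derivative-estimates}(i) says the two functions differ by $O_\prec(N^{-1})$ away from the edge. It is therefore enough to show that $\widehat J'$ has a quantitative positive slope at $\widehat\gamma_J$, and then to trap $\gamma_J$ in a window of size $N^{-1+\epsilon}$ around $\widehat\gamma_J$.

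\textbf{Step 1 (nondegeneracy).} For real $x$ in the neighborhood $U_\eta\cap\mathbb R$ we have
\[
\widehat J''(x)=\frac{2\beta}{f''(\tau_x)}+\int_{C_-}^{C_+}\frac{d\nu(k)}{(x-k)^2}.
\]
Here $f''(\tau_{\widehat\gamma_J})>0$ is finite, because $f$ is holomorphic near $\tau_{\widehat\gamma_J}$. The integral is bounded and positive since $\widehat\gamma_J-C_+>2\eta$. Hence $\widehat J''\ge c_0>0$ on $I:=[\widehat\gamma_J-\eta,\widehat\gamma_J+\eta]$, possibly after shrinking $\eta$. (Even without differentiability of $\tau$, monotonicity of $\tau_x$ together with the strict decrease of the Stieltjes term gives $\widehat J'(x)-\widehat J'(y)\ge c_0(x-y)$ on $I$, since $\partial_x\int(x-k)^{-1}d\nu=-\int(x-k)^{-2}d\nu$ is bounded below in absolute value.) So for $x=\widehat\gamma_J\pm s$ with $0<s\le\eta$ we get $\pm\widehat J'(x)\ge c_0 s$.

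\textbf{Step 2 (trapping).} Fix $\epsilon>0$ and set $s=N^{-1+\epsilon}$. Apply Lemma~\ref{lem:J-derivative-estimates}(i) with $\delta=\eta$ at the two points $x_\pm=\widehat\gamma_J\pm s$, which lie at distance at least $\eta$ to the right of $C_+$. With overwhelming probability $|J'(x_\pm)-\widehat J'(x_\pm)|\le N^{-1+\epsilon/2}$, and therefore $J'(x_+)>0>J'(x_-)$ for $N$ large.

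We must also make sure $J'$ is defined and strictly increasing on $[x_-,x_+]$. This holds on the event $\lambda_1\le C_++N^{-2/3+\epsilon}$, which has overwhelming probability by Condition~\ref{cond:rigidity-eigenvalues}, because then $[x_-,x_+]\subset(\lambda_1,\infty)$. On that event $J'$ is strictly increasing on $(\lambda_1,\infty)$ by the proof of Lemma~\ref{lem:critical-point-Jhat}, so its unique zero $\gamma_J$ lies in $(x_-,x_+)$. Since $\epsilon$ is arbitrary, this gives $|\gamma_J-\widehat\gamma_J|\prec N^{-1}$.

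\textbf{Step 3 (edge gap).} On the same events,
\[
\gamma_J-\lambda_1\ \ge\ \widehat\gamma_J-N^{-1+\epsilon}-C_+-N^{-2/3+\epsilon}\ \ge\ 2\eta-o(1),
\]
so one may take $c=\eta$. The only delicate point is Step 1: obtaining a uniform lower bound on the slope of $\widehat J'$. This is where the hypothesis $\beta<\beta^*$ (which puts $\widehat\gamma_J$ strictly right of $C_+$) and the strong convexity/holomorphy of $f$ enter. The rest is routine bookkeeping of overwhelming-probability events, of which there are only finitely many.
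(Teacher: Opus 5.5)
Your proof is correct and follows essentially the same route as the paper. Both arguments show that $J'$ changes sign at $\widehat\gamma_J\pm N^{-1+O(\epsilon)}$, using the $O_\prec(N^{-1})$ comparison with $\widehat J'$ from Lemma~\ref{lem:J-derivative-estimates}(i) together with $\widehat J''(\widehat\gamma_J)>0$, and then conclude from the strict monotonicity of $J'$ on $(\lambda_1,\infty)$. You also replace the paper's second-order Taylor expansion with a uniform slope bound and check via rigidity that the window lies in $(\lambda_1,\infty)$; both are minor tightenings of the same argument.
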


\begin{proof}
The estimate follows from the nondegeneracy of the deterministic saddle, Lemma~\ref{lem:J-derivative-estimates}(i), and the monotonicity of $J'$ on $(\lambda_1,\infty)$.  The uniform separation from the spectrum then follows from $\widehat\gamma_J>C_+$ and $\lambda_1\to C_+$.  See Appendix~\ref{app:high-temp-technical} for details.
\end{proof}

The first step in estimating the contour integral is to factor out $e^{\frac{N}{2}J(\gamma_J)}$ and estimate the remaining integral
\[
\int e^{\frac{N}{2}(J(z)-J(\gamma_J))}\,dz
\]
by the steepest descent method.  Since the quadratic term at the critical point gives the leading contribution, we need to compare $J(\gamma_J)$ and $J''(\gamma_J)$ with the corresponding quantities at the deterministic saddle point $\widehat\gamma_J$.  The following lemma provides the required estimates.

\begin{lemma}\label{lem:J-saddle-comparison}
Let $0 < \beta < \beta^*$. Then
\[
J(\gamma_J) = J(\widehat\gamma_J)+O_\prec(N^{-2}), \qquad J''(\gamma_J) = J''(\widehat\gamma_J)+O_\prec(N^{-1})
\]
with overwhelming probability.
\end{lemma}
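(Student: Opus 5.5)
The plan is to Taylor expand the random phase $J$ about its own critical point $\gamma_J$, where the linear term vanishes, and then feed in the bound $|\gamma_J-\widehat\gamma_J|\prec N^{-1}$ from Lemma~\ref{lem:gamma-approximation}.

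Throughout, work on the overwhelming-probability event where three things hold. First, $|\gamma_J-\widehat\gamma_J|\le N^{-1+\epsilon}$. Second, $\gamma_J$ and $\widehat\gamma_J$ both lie in a fixed compact real segment $K\subset U_\eta$ around $\widehat\gamma_J$, with $K$ at positive distance from $[C_-,\lambda_1]$. Third, $\sup_{z\in K}|J^{(\ell)}(z)|=O(1)$ for $\ell=2,3$, as given by Lemma~\ref{lem:J-derivative-estimates}(ii). On $K$, $J$ is real-analytic, and $J'(\gamma_J)=0$ by Lemma~\ref{lem:critical-point-Jhat}.

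\textbf{Value at the saddle.} Taylor's theorem with Lagrange remainder, expanded at $\gamma_J$ and evaluated at $\widehat\gamma_J$, gives
\[
J(\widehat\gamma_J)=J(\gamma_J)+J'(\gamma_J)(\widehat\gamma_J-\gamma_J)+\tfrac12 J''(\xi)(\widehat\gamma_J-\gamma_J)^2
\]
for some $\xi$ between the two points. The linear term vanishes, and the remainder is bounded by $\tfrac12\sup_K|J''|\cdot N^{-2+2\epsilon}$. Since $\epsilon>0$ is arbitrary, this is $J(\gamma_J)=J(\widehat\gamma_J)+O_\prec(N^{-2})$.

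\textbf{Second derivative.} The mean value theorem applied to $J''$ on $K$ gives
\[
|J''(\gamma_J)-J''(\widehat\gamma_J)|\le \sup_K|J'''|\,|\gamma_J-\widehat\gamma_J|\prec N^{-1}.
\]

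\textbf{Main obstacle.} The only delicate point is the uniform $O(1)$ control of $J''$ and $J'''$ on the segment joining the random point $\gamma_J$ to $\widehat\gamma_J$. This control holds because Lemma~\ref{lem:gamma-approximation} keeps $\gamma_J$ inside $U_\eta$ with overwhelming probability. It also uses the fact that the spectral part $\frac1N\sum_i (z-\lambda_i)^{-\ell}$ is bounded by $c^{-\ell}$ whenever $\operatorname{dist}(z,\{\lambda_i\})\ge c$, which in turn rests on $\lambda_1\le C_++\delta$ with overwhelming probability (Condition~\ref{cond:rigidity-eigenvalues}). The $V$-part is deterministic and holomorphic on $U_\eta$, so its derivatives are bounded on compact subsets. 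Combining these gives both estimates on the stated event.
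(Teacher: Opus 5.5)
Your proposal is correct and follows the paper's proof essentially line for line. Both use a Taylor expansion of $J$ at $\gamma_J$ where $J'(\gamma_J)=0$, with Lagrange and mean-value remainders. Both rely on the $O(1)$ bounds on $J''$ and $J'''$ over a fixed compact $K\subset U_\eta$ containing the segment between $\gamma_J$ and $\widehat\gamma_J$, combined with $|\gamma_J-\widehat\gamma_J|\prec N^{-1}$ from Lemma~\ref{lem:gamma-approximation}.
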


\begin{proof}
This is an immediate Taylor expansion around $\widehat\gamma_J$, using Lemma~\ref{lem:gamma-approximation} and the uniform derivative bounds from Lemma~\ref{lem:J-derivative-estimates}.  See Appendix~\ref{app:high-temp-technical}.
\end{proof}

We next estimate the additional factor $I_N(z)$ defined by \eqref{eq:I-N-def}. In the local steepest-descent analysis near $\gamma_J$, the factor $e^{\frac N2 J(z)}$ gives the main exponential contribution, while $I_N(z)$ contributes an additional prefactor. The following lemma shows that, uniformly on the scale $z=\gamma_J+O(N^{-1/2+\epsilon})$, this prefactor is of order $N^{-1/2}$.

\begin{lemma} \label{lem:I-estimate}
Let $0 < \beta < \beta^*$. For any sufficiently small $\epsilon>0$, we have
\[
I_N\left(\gamma_J+\frac{ic}{\sqrt N}\right) = \sqrt{\frac{2\pi}{\beta N f''(\tau_{\gamma_J})}} \left(1+O(N^{-1/4+C\epsilon})\right)
\]
uniformly for $|c|\le N^\epsilon$, with overwhelming probability.
\end{lemma}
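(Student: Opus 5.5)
We will prove the estimate by a Laplace expansion of the truncated radial integral in $t$ around the maximizer $\tau_z$, uniformly for the complex point $z=\gamma_J+\mathrm{i}c/\sqrt N$. Write $z=\gamma_J+w$ with $w=\mathrm{i}c/\sqrt N$, $|w|\le N^{-1/2+\epsilon}$. Work on the overwhelming-probability event of Lemma~\ref{lem:gamma-approximation}, so that $\gamma_J\in U_{\eta/2}$ and $\tau_{\gamma_J}$ is close to $\tau_{\widehat\gamma_J}<\beta_c/\beta$. In particular there is a fixed $\delta>0$ with $\delta<\tau_{\gamma_J}<\beta_c/\beta-\delta$ (for $\tau_{\widehat\gamma_J}>0$; note that $\widehat\gamma_J>C_+$ and $\beta<\beta^*$ give $\tau_{\widehat\gamma_J}\ge\tau_{C_+}$, and positivity follows from the saddle equation \eqref{eq:gammahat-definition}). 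Then
\[
I_N(z)=\int_0^{\beta_c/\beta}\exp\{N\beta(\phi_z(t)-\phi_z(\tau_z))\}\,dt,\qquad \phi_z(t):=tz-f(t),
\]
using $V(z)=\phi_z(\tau_z)$ with the holomorphic $\tau_z$ on $U_\eta$.

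\textbf{Step 1 (splitting).} Split $[0,\beta_c/\beta]$ into the window $|t-\tau_{\gamma_J}|\le N^{-1/2+\epsilon}$ and its complement.

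\textbf{Step 2 (complement).} Here take absolute values: $\re\phi_z(t)=t\gamma_J-f(t)$ is real and depends only on $\gamma_J$. By strong convexity,
\[
\re\phi_z(t)\le V(\gamma_J)-\tfrac{\mu}{2}(t-\tau_{\gamma_J})^2 .
\]
Also $|e^{-N\beta V(z)}|=e^{-N\beta\re V(z)}$, and
\[
\re V(z)=V(\gamma_J)+\re\bigl(\tau_{\gamma_J}w\bigr)+O(|w|^2)=V(\gamma_J)+O(N^{-1+2\epsilon}),
\]
since $w$ is purely imaginary. So the contribution of the complement is bounded by $e^{O(N^{2\epsilon})}e^{-cN^{2\epsilon}}$ times the interval length. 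To make this negligible, choose the window radius $N^{-1/2+2\epsilon}$ instead, so the Gaussian decay $e^{-cN^{4\epsilon}}$ beats the factor $e^{CN^{2\epsilon}}$.

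\textbf{Step 3 (window).} In the window only $C^1$ information on $f$ is given globally, but the holomorphic extension near $\tau_{\widehat\gamma_J}$ from Theorem~\ref{thm:general-theorem} gives a holomorphic $f$ on a neighborhood containing the window (and $\tau_z$). Expand $\phi_z$ around the complex point $\tau_z$, where $\phi_z'(\tau_z)=0$:
\[
\phi_z(t)-\phi_z(\tau_z)=-\tfrac12 f''(\tau_z)(t-\tau_z)^2+O(|t-\tau_z|^3).
\]
Here $|\tau_z-\tau_{\gamma_J}|=O(N^{-1/2+\epsilon})$. Shift the integration variable to $s=t-\tau_z$; the segment is moved into the complex plane by $O(N^{-1/2+\epsilon})$, which is justified by Cauchy's theorem with the short vertical connecting pieces being negligible. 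The cubic error is $N\cdot N^{-3/2+6\epsilon}=N^{-1/2+6\epsilon}$, so
\[
\int e^{-N\beta f''(\tau_z)s^2/2}\bigl(1+O(N^{-1/2+C\epsilon})\bigr)\,ds=\sqrt{\frac{2\pi}{\beta Nf''(\tau_z)}}\bigl(1+O(N^{-1/2+C\epsilon})\bigr),
\]
with Gaussian tails outside the window being exponentially small. Finally, $f''(\tau_z)=f''(\tau_{\gamma_J})+O(N^{-1/2+\epsilon})$, which gives the claim; the stated $N^{-1/4+C\epsilon}$ is weaker than this rate.

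\textbf{Main obstacle.} The delicate point is the shift of contour in $t$ combined with the control of $\re(\phi_z(t)-V(z))$ uniformly in complex $z$. Specifically, Step 2 needs
\[
\re\bigl(\phi_z(t)-V(z)\bigr)\le -c(t-\tau_{\gamma_J})^2+O(N^{-1+2\epsilon})
\]
on the real interval. Step 3 also needs $\re f''(\tau_z)>0$, which holds since $f''(\tau_{\gamma_J})>0$. If the contour shift is awkward, the alternative is to avoid it: expand $\phi_z(t)-V(z)$ directly in real $t-\tau_{\gamma_J}$ as
\[
-\tfrac12 f''(\tau_{\gamma_J})(t-\tau_{\gamma_J})^2+w(t-\tau_{\gamma_J})-\tfrac12 V''(\gamma_J)w^2+\text{cubic},
\]
and complete the square, which reproduces the same Gaussian integral with an $O(N^{-1/2+C\epsilon})$ error.
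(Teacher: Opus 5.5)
Your proposal is correct and is essentially the paper's proof. The structure is the same: split $[0,\beta_c/\beta]$ into a window around $\tau_{\gamma_J}$ and its complement, kill the complement using strong convexity together with $V(\gamma_J)-\re V(z)=O(N^{-1+2\epsilon})$, and handle the window by Taylor expansion at the complex saddle $\tau_z$ after shifting the contour to the real $s$-interval. Your window radius $N^{-1/2+2\epsilon}$ (instead of the paper's $\delta=N^{-5/12}$) even yields the sharper relative error $O(N^{-1/2+C\epsilon})$.

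One caveat concerns the fallback you mention at the end, completing the square on the real $t$-line without shifting. It does not work uniformly for $|c|\le N^{\epsilon}$, because there the modulus of the integrand exceeds the answer by a factor $e^{\beta c^2/(2f''(\tau_{\gamma_J}))}$, which swamps any absolute error bound. So the shift through $\tau_z$ is genuinely needed.
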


\begin{proof}
Let $ z=\gamma_J+\frac{ic}{\sqrt N}, $ for $ |c|\le N^\epsilon.$ We prove the estimate on the overwhelming probability event where Lemma~\ref{lem:gamma-approximation} holds.
Since $0 < \beta < \beta^*$, the deterministic critical point $\widehat{\gamma}_J$ satisfies $\widehat{\gamma}_J>C_+$. Thus
\[
\beta\tau_{\widehat{\gamma}_J} = \frac12\int_{C_-}^{C_+}\frac{d\nu(x)}{\widehat{\gamma}_J-x} < \frac12\int_{C_-}^{C_+}\frac{d\nu(x)}{C_+-x} = \beta_c.
\]
Hence $ 0<\tau_{\widehat{\gamma}_J}<\frac{\beta_c}{\beta}. $ Since $\gamma_J-\widehat{\gamma}_J=O_{\prec}(N^{-1})$, it follows that $\tau_{\gamma_J}$ remains in a fixed compact subinterval of $(0,\beta_c/\beta)$ with overwhelming probability.

By the analytic inverse function theorem, the map $z\mapsto \tau_z$ is holomorphic in a fixed neighborhood of $\gamma_J$. Therefore, uniformly for $|c|\le N^\epsilon$,
\begin{equation} \label{eq:tau-z-expansion}
    \tau_z
    =
    \tau_{\gamma_J}
    +
    \frac{ic}{f''(\tau_{\gamma_J})\sqrt N}
    +
    O(N^{-1+2\epsilon}).
\end{equation}

Let $ \delta=N^{-5/12}. $ Then since $\epsilon>0$ is sufficiently small, note that $|\tau_z-\tau_{\gamma_J}|=o(\delta)$. We now decompose
\[
I_N(z)=I_1+I_2,
\]
where
\[
I_1 = \int_{\tau_{\gamma_J}-\delta}^{\tau_{\gamma_J}+\delta} e^{N\beta\{(t-\tau_z)z-(f(t)-f(\tau_z))\}}\,dt
\]
and
\[
I_2 = \int_{[0,\beta_c/\beta]\setminus [\tau_{\gamma_J}-\delta,\tau_{\gamma_J}+\delta]} e^{N\beta\{(t-\tau_z)z-(f(t)-f(\tau_z))\}}\,dt .
\]

We first estimate $I_1$. Since $f'(\tau_z)=z$, Taylor expansion at $\tau_z$ gives
\[
(t-\tau_z)z-(f(t)-f(\tau_z)) = -\frac{f''(\tau_z)}{2}(t-\tau_z)^2 +O((t-\tau_z)^3),
\]
uniformly for $t\in[\tau_{\gamma_J}-\delta,\tau_{\gamma_J}+\delta]$. Setting $s=t-\tau_z$, we obtain
\[
I_1 = \int_{\tau_{\gamma_J}-\tau_z-\delta}^{\tau_{\gamma_J}-\tau_z+\delta} \exp\left\{ -N\beta\left( \frac{f''(\tau_z)}{2}s^2+O(s^3) \right) \right\}\,ds .
\]
Since the original integrand is holomorphic near the saddle point, we may first deform the $s$-contour to the real interval $[-\delta,\delta]$ and then apply the above Taylor expansion. By \eqref{eq:tau-z-expansion}, the connecting segments have length $O(N^{-1/2+\epsilon})$, and the Taylor expansion at $\tau_z$ shows that their contribution is $O(e^{-cN^{1/6}})$ for sufficiently small $\epsilon>0$. Hence
\[
I_1
=
\int_{-\delta}^{\delta}
\exp\left\{
-N\beta\left(
\frac{f''(\tau_z)}{2}s^2+O(s^3)
\right)
\right\}ds
+O(e^{-cN^{1/6}}).
\]
Since
\[
f''(\tau_z)
=
f''(\tau_{\gamma_J})+O(N^{-1/2+\epsilon})
\]
and $N|s|^3=O(N^{-1/4})$ for $|s|\le\delta=N^{-5/12}$, we obtain
\[
I_1
=
\left(1+O(N^{-1/4+C\epsilon})\right)
\int_{-\delta}^{\delta}
\exp\left\{
-\frac{N\beta f''(\tau_{\gamma_J})}{2}s^2
\right\}ds.
\]
Note that $N\delta^2=N^{1/6}$, so the Gaussian tails are exponentially small. Therefore,
\begin{equation} \label{eq:I1-estimate}
I_1
=
\sqrt{\frac{2\pi}{\beta Nf''(\tau_{\gamma_J})}}
\left(1+O(N^{-1/4+C\epsilon})\right).    
\end{equation}

It remains to estimate $I_2$. Let
\begin{equation} \label{eq:psi}
    \psi(t):=t\gamma_J-f(t).
\end{equation}
Since $f'(\tau_{\gamma_J})=\gamma_J$, the function $\psi$ has its unique maximum at $t=\tau_{\gamma_J}$. Moreover, by the strong convexity of $f$, there exists a constant $c>0$, independent of $N$, such that
\begin{equation} \label{eq:psi-property}
    \psi(t)\le \psi(\tau_{\gamma_J})-c(t-\tau_{\gamma_J})^2 = V(\gamma_J) -c(t-\tau_{\gamma_J})^2
\end{equation}
for all $t\in[0,\beta_c/\beta]$. Hence, on the domain of $I_2$,
\[
\psi(t)\le V(\gamma_J)-c\delta^2.
\]
Since $\re z=\gamma_J$, we have
\begin{align*}
    |I_2|
    &\le
    \exp\{-N\beta\re V(z)\}
    \int_{[0,\beta_c/\beta]\setminus
    [\tau_{\gamma_J}-\delta,\tau_{\gamma_J}+\delta]}
    e^{N\beta(t\gamma_J-f(t))}\,dt  \\
    &\le
    C\exp\left\{
        N\beta\bigl(V(\gamma_J)-\re V(z)\bigr)
        -cN\delta^2
    \right\}.
\end{align*}
Since $V'(z)=\tau_z$, Taylor expansion around $\gamma_J$ gives
\[
V(z) = V(\gamma_J) + \tau_{\gamma_J}(z-\gamma_J) + \frac{(z-\gamma_J)^2}{2f''(\tau_{\gamma_J})} + O(|z-\gamma_J|^3).
\]
Therefore, $ V(\gamma_J)-\re V(z) = O(N^{-1+2\epsilon}), $ uniformly for $|c|\le N^\epsilon$. Since $N\delta^2=N^{1/6}$, we obtain
\[
|I_2| \le C\exp\left\{ CN^{2\epsilon}-cN^{1/6} \right\} = o(N^{-1/2})
\]
for sufficiently small $\epsilon>0$.

Combining \eqref{eq:I1-estimate} with the estimate for $I_2$, we conclude that
\[
I_N\left(\gamma_J+\frac{ic}{\sqrt N}\right) = \sqrt{\frac{2\pi}{\beta N f''(\tau_{\gamma_J})}} \left(1+O(N^{-1/4+C\epsilon})\right),
\]
uniformly for $|c|\le N^\epsilon$, with overwhelming probability. This proves the lemma.
\end{proof}

Now, we estimate $Z_N^H(\beta)$ as follows.
\begin{lemma} \label{lem:high-temp-approximation}
Let $0 < \beta < \beta^*$. Then
\begin{equation} \label{eq:high-temp-approximation}
    \int_{\gamma_J-\mathrm{i}\infty}^{\gamma_J+\mathrm{i}\infty}
    \exp\left\{
        -\frac12\sum_{i=1}^N\log(z-\lambda_i)
    \right\}
    R_N(z)\,dz
    =
    \mathrm{i} e^{\frac{N}{2}J(\widehat\gamma_J)}
    \frac{2\sqrt{2}\pi}
    {N\sqrt{\beta f''(\tau_{\widehat\gamma_J})J''(\widehat\gamma_J)}}
    \left(1+O(N^{-1/4+C\epsilon})\right)
\end{equation}
with overwhelming probability.
\end{lemma}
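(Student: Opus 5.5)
We would prove Lemma~\ref{lem:high-temp-approximation} by steepest descent along the vertical line through the random saddle $\gamma_J$, which is a legitimate contour since $\gamma_J>\lambda_1$. Parametrize $z=\gamma_J+\mathrm{i}y$ and split the line into three pieces:
\begin{itemize}
\item[(a)] the local window $|y|\le N^{-1/2+\epsilon}$;
\item[(b)] the intermediate range $N^{-1/2+\epsilon}<|y|<\eta/2$, which still lies in $U_\eta$;
\item[(c)] the far range $|y|\ge\eta/2$.
\end{itemize}
On (a) and (b) we use the local form $e^{\frac N2J(z)}I_N(z)$. On (c) we keep the original integrand $\prod_i(z-\lambda_i)^{-1/2}R_N(z)$.

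\textbf{Local window (a).} Write $z=\gamma_J+\mathrm{i}c/\sqrt N$ with $|c|\le N^\epsilon$. Taylor expand, using $J'(\gamma_J)=0$ and the bound $J'''=O(1)$ from Lemma~\ref{lem:J-derivative-estimates}(ii):
\[
\frac N2\bigl(J(z)-J(\gamma_J)\bigr)=-\frac{c^2}{4}J''(\gamma_J)+O(N^{-1/2+3\epsilon}).
\]
Lemma~\ref{lem:I-estimate} supplies $I_N(z)=\sqrt{2\pi/(\beta Nf''(\tau_{\gamma_J}))}\,(1+O(N^{-1/4+C\epsilon}))$. Integrating over $c$ gives $\mathrm{i}N^{-1/2}\sqrt{4\pi/J''(\gamma_J)}$ up to Gaussian tails of size $e^{-cN^{2\epsilon}}$. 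Multiplying, the window contributes
\[
\mathrm{i}\,e^{\frac N2J(\gamma_J)}\,\frac{2\sqrt2\pi}{N\sqrt{\beta f''(\tau_{\gamma_J})J''(\gamma_J)}}\bigl(1+O(N^{-1/4+C\epsilon})\bigr).
\]
Next we replace $\gamma_J$ by $\widehat\gamma_J$. Lemma~\ref{lem:J-saddle-comparison} gives $\frac N2(J(\gamma_J)-J(\widehat\gamma_J))=O_\prec(N^{-1})$ and $J''(\gamma_J)=J''(\widehat\gamma_J)+O_\prec(N^{-1})$. Lemma~\ref{lem:gamma-approximation} together with smoothness of $\tau$ and $f''$ gives $f''(\tau_{\gamma_J})=f''(\tau_{\widehat\gamma_J})+O_\prec(N^{-1})$. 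Finally, $J''(\widehat\gamma_J)$ is bounded below by a positive constant with overwhelming probability, because $\widehat J''(\widehat\gamma_J)>0$ and Corollary~\ref{cor:bulk-approximation} controls the difference $J''-\widehat J''$ (via Cauchy estimates on $U_\eta$).

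\textbf{Tail estimates (b) and (c).} For $z$ on the vertical line,
\[
\bigl|e^{-\frac12\sum_i\log(z-\lambda_i)}\bigr|=\prod_i|z-\lambda_i|^{-1/2}, \qquad |R_N(z)|\le R_N(\gamma_J)=e^{N\beta V(\gamma_J)}I_N(\gamma_J).
\]
Each factor $|z-\lambda_i|^{-1}=((\gamma_J-\lambda_i)^2+y^2)^{-1/2}$ is strictly decreasing in $|y|$. So the integrand, divided by its value at $y=0$, is bounded by
\[
\exp\Bigl\{-\frac14\sum_i\log\bigl(1+y^2/(\gamma_J-\lambda_i)^2\bigr)\Bigr\}.
\]
Since the $\lambda_i$ lie in a bounded set and $\gamma_J-\lambda_1>c$ (Lemma~\ref{lem:gamma-approximation}), this is at most $e^{-cN\min(y^2,1)}$. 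It is also at most $C(1+|y|)^{-N/2}\cdot C^N$, which handles integrability for large $|y|$.

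On (b) and (c) we have $y^2\ge N^{-1+2\epsilon}$, so the contribution relative to $e^{\frac N2J(\gamma_J)}$ is at most $e^{-cN^{2\epsilon}}$, times the subexponential prefactor $I_N(\gamma_J)=O(N^{-1/2})$ (Lemma~\ref{lem:I-estimate} with $c=0$). This is negligible against the main term of order $N^{-1}$.

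\textbf{Main obstacle.} The delicate point is the bound $|R_N(z)|\le R_N(\gamma_J)$ on (b) and (c). It is trivial from $|e^{N\beta tz}|=e^{N\beta t\gamma_J}$, so we do not need any holomorphic extension of $V$ outside $U_\eta$. The decay therefore comes entirely from the spectral factor, which is why we keep the original representation away from the saddle. The remaining care is uniformity of the Taylor remainder on (a): the bound $N|z-\gamma_J|^3\le N^{-1/2+3\epsilon}$ needs $\epsilon$ small, and this is consistent with the error $O(N^{-1/4+C\epsilon})$ claimed in the statement.
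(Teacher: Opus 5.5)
Your proposal is correct and follows essentially the same route as the paper. It uses a Gaussian window $|z-\gamma_J|\le N^{-1/2+\epsilon}$, handled by Taylor expansion of $J$ together with Lemma~\ref{lem:I-estimate}. The tail is bounded via $|R_N(\gamma_J+\mathrm{i}y)|\le R_N(\gamma_J)=O(N^{-1/2})e^{N\beta V(\gamma_J)}$ and the decay of $\prod_i|z-\lambda_i|^{-1/2}$. Finally, $\gamma_J$ is replaced by $\widehat\gamma_J$ using Lemmas~\ref{lem:gamma-approximation} and~\ref{lem:J-saddle-comparison}. Your separate intermediate range (b) is unnecessary, since your tail bound already covers it without the local representation.
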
\begin{proof}
Near $\gamma_J$, set $z=\gamma_J+\mathrm{i}y/\sqrt N$.  Lemma~\ref{lem:I-estimate} and the quadratic expansion of $J$ give the Gaussian local contribution. The part $|y|>N^\epsilon$ is negligible by the uniform separation in \eqref{eq:gamma-J-edge-gap}. Finally, Lemma~\ref{lem:J-saddle-comparison} and Corollary~\ref{cor:bulk-approximation} replace the random saddle data by their deterministic counterparts. The complete estimate, including the contour-tail bound, is given in Appendix~\ref{app:high-temp-technical}.
\end{proof}
\subsection{Estimate of $Z_N^L(\beta)$ for $0 < \beta < \beta^*$}
\label{sec:LN-high-temp}

We now show that the contribution $Z_N^L(\beta)$ is negligible in the high-temperature regime. Recall that $Z_N^L(\beta)$ is the part of the partition function corresponding to the radial region $t\geq \frac{\beta_c}{\beta}$. In the regime $0 < \beta < \beta^*$, the saddle point of the bulk contribution lies strictly inside the interval $\left(0,\frac{\beta_c}{\beta}\right)$. Thus the integral defining $Z_N^L(\beta)$ starts strictly beyond the radial saddle, and we expect it to be exponentially smaller than $Z_N^H(\beta)$. When estimating $Z_N^L(\beta)$ we will take $\gamma = \gamma_J$.

We first observe that
\begin{equation} \label{eq:gamma-less-than-edge-radial-derivative}
    \gamma_J < f'(\frac{\beta_c}{\beta})
\end{equation}
with overwhelming probability. Indeed, $f'(\frac{\beta_c}{\beta})>C_+$ since $\frac{\beta_c}{\beta}>\tau_{C_+}$. Moreover,
\[
\widehat J'(f'(\frac{\beta_c}{\beta})) = 2\beta \frac{\beta_c}{\beta} - \int_{C_-}^{C_+}\frac{d\nu(k)}{f'(\frac{\beta_c}{\beta})-k} = 2\beta_c - \int_{C_-}^{C_+}\frac{d\nu(k)}{f'(\frac{\beta_c}{\beta})-k} >0,
\]
because
\[
\int_{C_-}^{C_+}\frac{d\nu(k)}{f'(\frac{\beta_c}{\beta})-k} < \int_{C_-}^{C_+}\frac{d\nu(k)}{C_+-k} = 2\beta_c.
\]
Since $\widehat J'$ is strictly increasing and $\widehat J'(\widehat\gamma_J)=0$, it follows that $ \widehat\gamma_J<f'(\frac{\beta_c}{\beta}). $ By Lemma \ref{lem:gamma-approximation}, $\gamma_J=\widehat\gamma_J+O_{\prec}(N^{-1})$, and hence \eqref{eq:gamma-less-than-edge-radial-derivative} holds with overwhelming probability. In particular, there exists a constant $c>0$, independent of $N$, such that
\begin{equation} \label{eq:positive-gap-radial-tail}
    f'(\frac{\beta_c}{\beta})-\gamma_J\geq c
\end{equation}
with overwhelming probability.

Now using Fubini's theorem, we may write $Z_N^L(\beta)$ as
\begin{align}
    Z_N^L(\beta)
    &=
    -\frac{\mathrm{i}N}{2}
    \left(\frac{\pi}{\beta}\right)^{N/2-1}
    \int_{\gamma_J-\mathrm{i}\infty}^{\gamma_J+\mathrm{i}\infty}
    \exp\left\{
        N\beta(\frac{\beta_c}{\beta} z-f(\frac{\beta_c}{\beta}))
        -
        \frac12\sum_{i=1}^N\log(z-\lambda_i)
    \right\}
    W_N(z)\,dz
\end{align}
where
\[
W_N(z) := \int_{\frac{\beta_c}{\beta}}^{\infty} \exp\left\{ N\beta\bigl[(t-\frac{\beta_c}{\beta})z-(f(t)-f(\frac{\beta_c}{\beta}))\bigr] \right\}\,dt .
\]
Indeed, strong convexity of $f$ gives $ f(t)-f(\frac{\beta_c}{\beta}) \geq f'(\frac{\beta_c}{\beta})(t-\frac{\beta_c}{\beta}), $ when $t\geq \frac{\beta_c}{\beta}$. Therefore, using \eqref{eq:positive-gap-radial-tail},
\begin{align}
    |W_N(\gamma_J+\mathrm{i}y)|
    &\leq
    \int_{\frac{\beta_c}{\beta}}^{\infty}
    \exp\left\{
        N\beta(t-\frac{\beta_c}{\beta})(\gamma_J-f'(\frac{\beta_c}{\beta}))
    \right\}\,dt  \notag \\
    &=
    \int_0^{\infty}
    \exp\left\{
        -N\beta(f'(\frac{\beta_c}{\beta})-\gamma_J)u
    \right\}\,du  \notag \\
    &\leq
    \frac{C}{N}.
    \label{eq:radial-tail-bound}
\end{align}

Taking absolute value of $Z_N^L(\beta)$ and applying \eqref{eq:radial-tail-bound}, we obtain
\begin{align}
    |Z_N^L(\beta)|
    &\leq
    C
    \left(\frac{\pi}{\beta}\right)^{N/2-1}
    \exp\left\{
        N\beta(\frac{\beta_c}{\beta}\gamma_J-f(\frac{\beta_c}{\beta}))
        -
        \frac12\sum_{i=1}^N\log(\gamma_J-\lambda_i)
    \right\}                                      \notag \\
    &\quad\times
    \int_{-\infty}^{\infty}
    \exp\left\{
        -\frac14\sum_{i=1}^N
        \log\left(
            1+\frac{s^2}{(\gamma_J-\lambda_i)^2}
        \right)
    \right\}\,ds .
    \label{eq:LN-bound-before-s-integral}
\end{align}
By Lemma~\ref{lem:gamma-approximation}, $\gamma_J$ stays in a fixed compact subset of $(C_+,\infty)$ with overwhelming probability. Thus
\begin{equation} \label{eq:vertical-tail-integral-bound}
    \int_{-\infty}^{\infty}
    \exp\left\{
        -\frac14\sum_{i=1}^N
        \log\left(
            1+\frac{s^2}{(\gamma_J-\lambda_i)^2}
        \right)
    \right\}\,ds
    \leq
    \int_{-\infty}^{\infty}(1+cs^2)^{-N/4}\,ds
    \leq C.
\end{equation}
Combining \eqref{eq:LN-bound-before-s-integral} and \eqref{eq:vertical-tail-integral-bound} gives
\begin{equation} \label{eq:LN-exp-bound}
    |Z_N^L(\beta)|
    \leq
    C
    \left(\frac{\pi}{\beta}\right)^{N/2-1}
    \exp\left\{
        N\beta(\frac{\beta_c}{\beta}\gamma_J-f(\frac{\beta_c}{\beta}))
        -
        \frac12\sum_{i=1}^N\log(\gamma_J-\lambda_i)
    \right\}
\end{equation}
with overwhelming probability.

We can compare the exponent in \eqref{eq:LN-exp-bound} with the exponent of $Z_N^H(\beta)$. Consider $\psi : [0,\infty) \to \R$ as defined in \eqref{eq:psi}.
The function $\psi$ is strictly concave and has its unique maximizer $\tau_{\gamma_J}$. Moreover, by \eqref{eq:gamma-less-than-edge-radial-derivative}, we have $ \tau_{\gamma_J}<\frac{\beta_c}{\beta} $ with overwhelming probability. Then there exists a constant $c>0$, independent of $N$, such that
\begin{equation} \label{eq:radial-exponential-gap}
    \psi(\frac{\beta_c}{\beta})
    \leq
    \psi(\tau_{\gamma_J})-c.
\end{equation}
Equivalently, $ \frac{\beta_c}{\beta}\gamma_J-f(\frac{\beta_c}{\beta}) \leq V(\gamma_J)-c. $ Thus \eqref{eq:LN-exp-bound} yields
\begin{equation} \label{eq:LN-final-exp-bound}
    |Z_N^L(\beta)|
    \leq
    C
    \left(\frac{\pi}{\beta}\right)^{N/2-1}
    \exp\left\{
        \frac{N}{2}J(\gamma_J)-cN
    \right\}
\end{equation}
with overwhelming probability.

On the other hand, Lemma \ref{lem:high-temp-approximation} gives the leading asymptotics of $Z_N^H(\beta)$:
\[
Z_N^H(\beta) = \left(\frac{\pi}{\beta}\right)^{N/2-1} \exp\left\{ \frac{N}{2}J(\widehat\gamma_J) + O_{\prec}(\log N) \right\}.
\]
Since $ J(\gamma_J)=J(\widehat\gamma_J)+O_{\prec}(N^{-2})$, we conclude from \eqref{eq:LN-final-exp-bound} that
\[
\frac{|Z_N^L(\beta)|}{|Z_N^H(\beta)|} \leq e^{-cN}
\]
with overwhelming probability, after decreasing $c>0$ if necessary. Hence
\[
Z_N^L(\beta)=o(Z_N^H(\beta))
\]
with overwhelming probability. Therefore, in the high-temperature regime $0 < \beta < \beta^*$, the free energy is determined by the bulk contribution $Z_N^H(\beta)$.

\section{Low-temperature regime, $\beta \in (\beta^*,\infty)$} \label{sec:low}
\subsection{Estimate of $Z_N^L(\beta)$ for $\beta \in (\beta^*,\infty)$}
Throughout the low-temperature analysis, we assume that $\tau_{C_+}>0$. In the low-temperature regime, the radial saddle lies in the region $(\beta_c/\beta,\infty)$. For such $t$, the contour integral in \eqref{eq:L-N} behaves as in the low-temperature phase of the SSK model with an effective inverse temperature $\beta t$. Consequently, the saddle point of the phase $G(z;t)$ is expected to be pinned near the largest eigenvalue $\lambda_1$, rather than near a deterministic point outside the limiting support.

We begin by making this observation quantitative. The next lemma shows that, uniformly for $t$ separated from $\beta_c/\beta$, the saddle point $\gamma_G(t)$ satisfies $ \gamma_G(t)-\lambda_1 \prec N^{-1}. $ This estimate will be used to expand $G(\gamma_G(t);t)$ in terms of the largest eigenvalue and the deterministic edge $C_+$.

Choose $\delta>0$ sufficiently small and then choose $T>0$ sufficiently large so that
\begin{equation} \label{eq:delta-T-low-temp}
    \frac{\beta_c}{\beta}+\delta<\tau_{C_+}<T.
\end{equation}

\begin{lemma} \label{lem:critical-point-G}
Suppose that $  \frac{\beta_c}{\beta}+\delta<t<T$. Let $\gamma_G(t)$ be the critical point of
\[
G(z;t) = 2\beta t z-\frac{1}{N}\sum_{i=1}^N \log(z-\lambda_i), \qquad z>\lambda_1.
\]
Then
\begin{equation} \label{eq:gamma-lambda1-bound}
    \frac{1}{3\beta NT}
    \leq
    \gamma_G(t)-\lambda_1
    \prec
    N^{-1}
\end{equation}
uniformly with overwhelming probability.
\end{lemma}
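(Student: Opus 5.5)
Throughout we work with the real critical-point equation. For real $z>\lambda_1$,
\[
\partial_z G(z;t)=2\beta t-\frac1N\sum_{i=1}^N\frac{1}{z-\lambda_i}.
\]
This function is strictly increasing in $z$, tends to $-\infty$ as $z\downarrow\lambda_1$, and tends to $2\beta t>0$ as $z\to\infty$. Hence $\gamma_G(t)$ is well defined and unique, and it is characterized by
\[
2\beta t=\frac1N\sum_{i}\frac{1}{\gamma_G(t)-\lambda_i}.
\]
Monotonicity in $t$ is also immediate: a larger $t$ gives a smaller $\gamma_G(t)$. So the bounds need only be checked at the endpoints $t=T$ (for the lower bound) and $t=\beta_c/\beta+\delta$ (for the upper bound). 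This makes the estimate automatically uniform in $t$.

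\textbf{Lower bound.} This part is deterministic. Every summand is at most $(\gamma_G-\lambda_1)^{-1}$, so
\[
2\beta t\le \frac{1}{\gamma_G-\lambda_1}\cdot\frac1N+\frac{N-1}{N}\cdot\frac{1}{\gamma_G-\lambda_1}.
\]
This is too crude, so we separate the $i=1$ term. The remaining sum is $\frac1N\sum_{i\ge2}(\gamma_G-\lambda_i)^{-1}$. On the overwhelming-probability rigidity event (Condition~\ref{cond:rigidity-eigenvalues}), together with Corollary~\ref{cor:edge-approximation} or a direct comparison, this sum is at most $2\beta_c+o(1)$. Indeed, $\frac1N\sum_{i\ge2}(\lambda_1-\lambda_i)^{-1}\to 2\beta_c$, since the gaps $\lambda_1-\lambda_i\gtrsim i^{2/3}N^{-2/3}$ make the sum converge to the integral.

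It follows that
\[
\frac{1}{N(\gamma_G-\lambda_1)}\ge 2\beta t-2\beta_c-o(1)\ge 2\beta\delta-o(1)>0 .
\]
This inequality gives the \emph{upper} bound instead. Therefore, for the lower bound we use the trivial direction:
\[
\frac{1}{N(\gamma_G-\lambda_1)}\le 2\beta t\le 2\beta T .
\]
This yields $\gamma_G-\lambda_1\ge (2\beta NT)^{-1}\ge (3\beta NT)^{-1}$, as stated.

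\textbf{Upper bound.} From the display above,
\[
\gamma_G(t)-\lambda_1\le \bigl(N(2\beta\delta-o(1))\bigr)^{-1}\le C N^{-1},
\]
provided we justify $\frac1N\sum_{i\ge2}(\gamma_G-\lambda_i)^{-1}\le 2\beta_c+o(1)$. Since $\gamma_G>\lambda_1$, it suffices to bound $\frac1N\sum_{i\ge 2}(\lambda_1-\lambda_i)^{-1}$. This is the main obstacle, because near the edge the individual terms are of size $N^{2/3}$ and one must exclude unusually small gaps.

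The plan is to split the indices at $i\le N^{\epsilon'}$ and $i>N^{\epsilon'}$.
\begin{itemize}
\item For $i>N^{\epsilon'}$, rigidity gives $\lambda_1-\lambda_i=(C_+-\xi_i)(1+o(1))$ with $C_+-\xi_i\asymp i^{2/3}N^{-2/3}$. A Riemann-sum comparison then shows that this part of the sum equals $\int d\nu(x)/(C_+-x)+o(1)$.
\item For $2\le i\le N^{\epsilon'}$, we need a lower bound on $\lambda_1-\lambda_2$ of order $N^{-2/3-\epsilon''}$. Rigidity alone does not provide this. Instead we allow it to fail: if the small-index part is large, the inequality $\partial_zG=0$ still forces $\gamma_G-\lambda_1\prec N^{-1}$. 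Specifically, each term with $i\le N^{\epsilon'}$ satisfies $(\gamma_G-\lambda_i)^{-1}\le(\gamma_G-\lambda_1)^{-1}$, so these terms contribute at most $N^{\epsilon'-1}(\gamma_G-\lambda_1)^{-1}$.
\end{itemize}
Combining the two pieces gives
\[
(1+N^{\epsilon'})\frac{1}{N(\gamma_G-\lambda_1)}\ge 2\beta\delta-o(1).
\]
Hence $\gamma_G-\lambda_1\le C N^{-1+\epsilon'}$ for every small $\epsilon'>0$, which is exactly $\gamma_G(t)-\lambda_1\prec N^{-1}$. Monotonicity in $t$ then makes the bound uniform over $t$.
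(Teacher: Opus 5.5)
Your proof is correct and is essentially the paper's argument. For the lower bound you keep only the $i=1$ term in the critical-point equation. For the upper bound you split off the top $N^{\epsilon'}$ eigenvalues, bound them by $\frac{1}{N(\gamma_G-\lambda_1)}$, and compare the rest to $2\beta_c$ via rigidity. The paper does the same by evaluating $G'$ at the test points $\lambda_1+(3\beta Nt)^{-1}$ and $\lambda_1+N^{-1+\epsilon}$ and invoking Corollary~\ref{cor:edge-approximation}, whose proof contains the identical split; you inline it at $\gamma_G$ itself, with the monotonicity in $t$ giving uniformity. Delete the earlier claim that $\frac1N\sum_{i\ge2}(\lambda_1-\lambda_i)^{-1}\le 2\beta_c+o(1)$ on the rigidity event: as you note yourself later, rigidity does not control $\lambda_1-\lambda_2$, so that claim is not justified with overwhelming probability.
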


\begin{proof}
For fixed $t$, this is the standard low-temperature saddle estimate for the spherical model with effective inverse temperature $\beta t$.  The bounds are uniform for $t\in[\beta_c/\beta+\delta,T]$; see Appendix~\ref{app:low-temp-technical}.
\end{proof}

Having located the saddle point near the top eigenvalue, we next estimate the value of the phase at the saddle point and its higher derivatives.  The estimate of $G(\gamma_G(t);t)$ separates the random edge contribution, which depends on $\lambda_1$, from the deterministic logarithmic term coming from the limiting spectral measure.  The bounds on the derivatives are also needed to control the local behavior of $G$ along the steepest descent contour.

\begin{lemma} \label{lem:G-approximation}
Suppose  $T > t > \frac{\beta_c}{\beta} + \delta$. Let $\gamma_G(t)$ be defined as in Lemma~\ref{lem:critical-point-G}. Then
\begin{equation} \label{eq:G-gamma-approx-low}
    G(\gamma_G(t);t)
    =
    2\beta t\lambda_1
    -
    \int_{C_-}^{C_+}\log(C_+-x)\,d\nu(x)
    -
    2\beta_c(\lambda_1-C_+)
    +
    O_\prec(N^{-1}),
\end{equation}
uniformly with overwhelming probability. Moreover, for every $0<\epsilon<\frac{1}{12}$, there exists a constant $C_0>0$, depending only on $\beta$, $\delta$, and $T$, such that
\begin{equation} \label{eq:G-derivative-bound-low}
    N^{l-1-4l\epsilon}
    \leq
    \frac{(-1)^l}{(l-1)!}G^{(l)}(\gamma_G(t);t)
    \leq
    C_0^l N^{l-1+3\epsilon}
\end{equation}
uniformly for all $l=2,3,\ldots$, with overwhelming probability.
\end{lemma}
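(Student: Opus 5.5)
The plan is to follow the Baik--Lee low-temperature saddle analysis, with the effective inverse temperature $\beta t$ replacing $\beta$ and all bounds made uniform in $t\in[\beta_c/\beta+\delta,T]$. Write $d:=\gamma_G(t)-\lambda_1$. By Lemma~\ref{lem:critical-point-G}, $(3\beta NT)^{-1}\le d\prec N^{-1}$ uniformly in $t$. I would split
\[
G(\gamma_G;t)=2\beta t\gamma_G-\frac1N\log d-\frac1N\sum_{i\ge2}\log(\gamma_G-\lambda_i).
\]
The first term equals $2\beta t\lambda_1+O_\prec(N^{-1})$. The second is $O_\prec(N^{-1}\log N)$, and in the $\prec$ sense this is already $O_\prec(N^{-1})$, since $\log N\le N^\epsilon$.

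For the third term I would use the logarithmic statement of Corollary~\ref{cor:edge-approximation} with $z=\gamma_G$. This is admissible because $N^{-m}\le d\prec N^{-2/3}$ for $m=2$. It gives
\[
\frac1N\sum_{i}\log(\gamma_G-\lambda_i)=\int\log(C_+-x)\,d\nu+(\gamma_G-C_+)\,2\beta_c+O_\prec(N^{-1}).
\]
Adding back the $i=1$ term, which is $O_\prec(N^{-1})$ by the previous step, and replacing $\gamma_G-C_+$ by $\lambda_1-C_+$ at cost $O_\prec(N^{-1})$, gives \eqref{eq:G-gamma-approx-low}. Uniformity in $t$ follows because every estimate depends on $t$ only through $\gamma_G(t)$, and $\gamma_G(t)$ lies in the window covered by the corollary uniformly.

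For \eqref{eq:G-derivative-bound-low}, note that for $l\ge2$ the linear term drops out and
\[
\frac{(-1)^l}{(l-1)!}G^{(l)}(\gamma_G;t)=\frac1N\sum_i(\gamma_G-\lambda_i)^{-l},
\]
which is a sum of positive terms. For the lower bound, keep only $i=1$: this gives $N^{-1}d^{-l}$. Since $d\le N^{-1+4\epsilon}$ with overwhelming probability, this is at least $N^{l-1-4l\epsilon}$.

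For the upper bound, the $i=1$ term is at most $N^{-1}(3\beta NT)^l=C^lN^{l-1}$. For $i\ge2$, I would use rigidity (Condition~\ref{cond:rigidity-eigenvalues}) together with $\gamma_G>\lambda_1$ to get $\gamma_G-\lambda_i\ge\lambda_1-\lambda_i$. The main obstacle is bounding $\lambda_1-\lambda_i$ from below, since it could be small for the first few $i$. Two ingredients handle this.
\begin{itemize}
\item For $i\ge N^{\epsilon'}$, rigidity gives $\lambda_1-\lambda_i\ge c\,i^{2/3}N^{-2/3}$.
\item For small $i$, rigidity alone does not separate $\lambda_1$ from $\lambda_2$. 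Here I would use the critical-point equation instead:
\[
2\beta t=\frac1N\sum_i(\gamma_G-\lambda_i)^{-1}\ge\frac1N(\gamma_G-\lambda_i)^{-1}\quad\text{for each }i,
\]
so $\gamma_G-\lambda_i\ge(2\beta TN)^{-1}$ for every $i$.
\end{itemize}
Combining these, the $N^{\epsilon'}$ small indices contribute at most $N^{\epsilon'}C^lN^{l-1}$. The remaining indices contribute at most
\[
\frac1N\sum_i\bigl(N^{2/3}i^{-2/3}\bigr)^l\le CN^{2l/3-1}\sum_i i^{-2l/3}.
\]
For $l\ge2$ the series converges, so this part is $O(N^{2l/3-1})$. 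Choosing $\epsilon'\le3\epsilon$ yields the upper bound $C_0^lN^{l-1+3\epsilon}$, with $C_0$ depending on $\beta,\delta,T$ through the constant $3\beta T$ and the rigidity constants.
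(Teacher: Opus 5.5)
Your proposal is correct and follows essentially the same route as the paper. You apply the logarithmic part of Corollary~\ref{cor:edge-approximation} at $z=\gamma_G$ and absorb $2\beta t(\gamma_G-\lambda_1)$ and $2\beta_c(\gamma_G-\lambda_1)$ into $O_\prec(N^{-1})$. You then bound $\frac1N\sum_i(\gamma_G-\lambda_i)^{-l}$ from below by the $i=1$ term, and from above by splitting off the first $N^{3\epsilon}$ indices and using rigidity for the rest.

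The differences are cosmetic. For small $i$ you use the critical-point equation, whereas the paper simply notes $\gamma_G-\lambda_i\ge\gamma_G-\lambda_1\ge(3\beta NT)^{-1}$, so the ``obstacle'' you flag is not really one. Also, the constant in your tail sum should be $C^l$ rather than $C$.
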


\begin{proof}
The phase expansion follows from Corollary~\ref{cor:edge-approximation} and Lemma~\ref{lem:critical-point-G}.  The derivative bounds are the usual edge saddle estimates.  Details are given in Appendix~\ref{app:low-temp-technical}.
\end{proof}

\begin{lemma} \label{lem:estimate-exp-G}
Suppose $T > t > \frac{\beta_c}{\beta} + \delta$. Let $\gamma_G(t)$ be defined as in Lemma~\ref{lem:critical-point-G}. Then there exists a positive random factor $K_N(t)$ satisfying $ N^{-C}<K_N(t)<C $ for some constant $C>0$, independent of $N$, such that
\begin{equation} \label{eq:estimate-exp-G}
    \int_{\gamma_G(t)-\mathrm{i}\infty}^{\gamma_G(t)+\mathrm{i}\infty}
    e^{\frac{N}{2}G(z;t)}\,dz
    =
    \mathrm{i} e^{\frac{N}{2}G(\gamma_G(t);t)}K_N(t)
\end{equation}
uniformly with overwhelming probability. Moreover, for all $t>\frac{\beta_c}{\beta}$, the same formula holds with $0 < K_N(t)< C$ with overwhelming probability, uniformly in $t$.
\end{lemma}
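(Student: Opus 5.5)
On the vertical line $z=\gamma_G(t)+\mathrm{i}y$ I would write
\[
e^{\frac N2(G(z;t)-G(\gamma_G(t);t))}
=e^{\mathrm{i}N\beta t y}\prod_{i=1}^N\Bigl(1+\frac{\mathrm{i}y}{\gamma_G(t)-\lambda_i}\Bigr)^{-1/2}.
\]
The plan rests on one observation. Since $\gamma_G(t)>\lambda_1$, the map $y\mapsto\overline{\text{integrand}}$ equals the integrand at $-y$. Hence
\[
\int e^{\frac N2 G(z;t)}\,dz=\mathrm{i}\,e^{\frac N2G(\gamma_G;t)}\int_{\R}\re\bigl(\cdots\bigr)\,dy .
\]
This identifies $K_N(t)$ as a real number. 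Its upper bound follows immediately from the modulus bound
\[
\prod_i\bigl|1+\mathrm{i}y/(\gamma_G-\lambda_i)\bigr|^{-1/2}\le\prod_i\bigl(1+y^2/(\gamma_G-\lambda_i)^2\bigr)^{-1/4}.
\]
To get positivity and the uniform statement for all $t>\beta_c/\beta$, I would not work on the vertical line. I would use the steepest descent contour of Baik--Lee, namely the level curve $\operatorname{Im}G(z;t)=0$ through $\gamma_G(t)$. On it $\re G$ is maximal at $\gamma_G$, so the integral equals $\mathrm{i}$ times a positive integral of $e^{\frac N2 G}$. This gives $K_N(t)>0$ deterministically for every $t$, by Cauchy's theorem, since the integrand decays like $|z|^{-N/2}$ and the contour deformation is legitimate.

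For the upper bound $K_N(t)<C$, I would bound the modulus of the integrand on the vertical line. The factor $\gamma_G-\lambda_1\asymp N^{-1}$ (from Lemma~\ref{lem:critical-point-G}) controls only one factor. Instead I would use the critical point equation
\[
\frac1N\sum_i\frac{1}{\gamma_G-\lambda_i}=2\beta t .
\]
Combined with $\log(1+u)\ge u/(1+u)$ for each $i$, this yields
\[
\frac N4\cdot\frac1N\sum_i\log\Bigl(1+\frac{y^2}{(\gamma_G-\lambda_i)^2}\Bigr)\gtrsim N\min(y^2,|y|)
\]
with constants depending on $\beta_c,\delta$ and a bound on the spectrum. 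Integrating over $y$ gives $O(N^{-1/2})$ at worst, hence $K_N\le C$. For $t$ near $\beta_c/\beta$ or large $t$, the same computation uses only $\sum(\gamma_G-\lambda_i)^{-1}=2\beta tN$ and the spectral bound $\lambda_N\ge -C$ with overwhelming probability. This makes the upper bound uniform in $t>\beta_c/\beta$.

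The lower bound $K_N(t)>N^{-C}$ for $t\in(\beta_c/\beta+\delta,T)$ is the main obstacle. Here I would localize near $\gamma_G$ on the steepest descent contour, at scale $|z-\gamma_G|\le N^{-1-\epsilon'}$, using Lemma~\ref{lem:G-approximation}. The derivative bounds there give
\[
\bigl|G(z)-G(\gamma_G)\bigr|\le\sum_{l\ge2}C_0^lN^{l-1+3\epsilon}|z-\gamma_G|^l .
\]
Taking $|z-\gamma_G|\le N^{-1-4\epsilon}/(2C_0)$ makes $\frac N2|G(z)-G(\gamma_G)|=O(1)$. On that portion of the contour, of length $\gtrsim N^{-1-4\epsilon}$, the positive integrand is therefore $\gtrsim e^{\frac N2G(\gamma_G)}$. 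This yields $K_N\ge cN^{-1-4\epsilon}\ge N^{-C}$. Positivity of the remaining contour contribution means nothing needs to be subtracted. All inputs hold uniformly in $t\in[\beta_c/\beta+\delta,T]$ on the overwhelming-probability event of Lemmas~\ref{lem:critical-point-G} and \ref{lem:G-approximation}.

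The delicate point is verifying that the steepest descent path is well-defined and connects to $\pm\mathrm{i}\infty$ (or closes around the spectrum appropriately) so that the deformation is valid. If that is awkward, a fallback is to take the real part of the vertical-line integral. The local piece $|y|\le cN^{-1-4\epsilon}$ then has real part $\ge c'N^{-1-4\epsilon}$. One must then show the remaining real parts cannot cancel it. I expect this to require the tail estimate above, and it is exactly where the argument could fail.
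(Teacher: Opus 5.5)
Your route is the paper's. You define $K_N(t)$ by the identity itself. The upper bound comes from the modulus on the vertical line, using $\gamma_G-\lambda_1\le 1/(2\beta t)<1/(2\beta_c)$ from the critical-point equation together with boundedness of the spectrum. The lower bound comes from deforming onto the level curve $\operatorname{Im}G=0$ through $\gamma_G$ and localizing with Lemma~\ref{lem:G-approximation}. However, the lower bound, which you rightly call the main obstacle, rests on two steps that are asserted rather than proved.

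\textbf{The contour step you flag carries the positivity.} Since $G$ is real on $\Gamma$ and $\Gamma^-=\overline{\Gamma^+}$, one has $\int_\Gamma e^{\frac N2G}\,dz=2\mathrm{i}\int_{\Gamma^+}e^{\frac N2G}\,d(\operatorname{Im}z)$. So you need $\operatorname{Im}z$ to be a monotone parameter along $\Gamma^+$; otherwise this integral can cancel. The paper gets this from a global graph construction. For each $0<y<\pi/(2\beta t)$, the map $x\mapsto\operatorname{Im}G(x+\mathrm{i}y)=2\beta ty-\frac1N\sum_i\operatorname{Arg}(x-\lambda_i+\mathrm{i}y)$ increases strictly from $2\beta ty-\pi<0$ to $2\beta ty>0$, so it has a unique zero $h_t(y)$. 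The resulting curve leaves $\gamma_G$ vertically, satisfies $h_t<\gamma_G$, and runs to $-\infty\pm\mathrm{i}\pi/(2\beta t)$, not to $\pm\mathrm{i}\infty$. The deformation is legitimate because the swept region stays off the cut $(-\infty,\lambda_1]$. Moreover, the closing arcs $|z|=R$ with $\operatorname{Re}z\le\gamma_G$ contribute $O(R^{1-N/2})$.

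\textbf{What enters the lower bound is the $\operatorname{Im}z$-extent of $\Gamma^+$ near $\gamma_G$, not its length.} Your local bound controls the integrand on $|z-\gamma_G|\le r$ with $r=N^{-1-4\epsilon}/(2C_0)$. Write $z-\gamma_G=u+\mathrm{i}v$. The relation $\operatorname{Im}G=0$ gives $G''(\gamma_G)uv=-\operatorname{Im}\sum_{\ell\ge3}\frac{G^{(\ell)}(\gamma_G)}{\ell!}(u+\mathrm{i}v)^\ell$. With only $G''(\gamma_G)\ge N^{1-8\epsilon}$ and $|G^{(\ell)}(\gamma_G)|/\ell!\le C_0^\ell N^{\ell-1+3\epsilon}$, this yields $|u|/v\lesssim N^{1+11\epsilon}|z-\gamma_G|$ wherever $v\asymp|z-\gamma_G|$. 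At your radius this ratio is of order $N^{7\epsilon}$, so the available bounds do not force the curve to be near vertical there, and $K_N\ge cN^{-1-4\epsilon}$ does not follow. The paper runs this cone argument in $|z-\gamma_G|\le N^{-2}$, where it gives $|u|\le v/2$, and takes the exit point at radius $N^{-3}$. That gives an imaginary part at least $cN^{-3}$, hence $K_N\ge cN^{-3}$, which suffices.

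\textbf{Two smaller points.}
\begin{itemize}
\item The quantity $\frac14\sum_i\log\bigl(1+y^2/(\gamma_G-\lambda_i)^2\bigr)$ grows only like $\frac N2\log|y|$, not like $N|y|$. The inequality $\log(1+u)\ge u/(1+u)$ only gives $\gtrsim N\min(y^2,1)$, which is not integrable over $\mathbb R$. Instead, use $\gamma_G-\lambda_i\le C$ directly to bound the integrand by $(1+cy^2)^{-N/4}$.
\item Positivity of $K_N(t)$ for every $t$ needs no contour at all. By Lemma~\ref{lem:integral-representation} with inverse temperature $\beta t$, the vertical-line integral equals $2\pi\mathrm{i}(N\beta t)^{N/2-1}/\Gamma(N/2)$ times a positive spherical integral.
\end{itemize}
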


\begin{proof}
This is the uniform low-temperature steepest-descent estimate for the spherical contour integral with effective inverse temperature $\beta t$. The construction of the contour and the upper and lower bounds for $K_N(t)$ are collected in Appendix~\ref{app:low-temp-technical}.
\end{proof}

Now we estimate $Z_N^L(\beta)$. Note that
\begin{equation} \label{eq:tau-lambda1-inside}
    \frac{\beta_c}{\beta}+\delta<\tau_{\lambda_1}<T
\end{equation}
with overwhelming probability since $f$ is strongly convex and $\lambda_1\to C_+$ with overwhelming probability. We decompose $Z_N^L(\beta)$ by
\[
Z_N^L(\beta) = Z_N^{L(1)}(\beta)+Z_N^{L(2)}(\beta)+Z_N^{L(3)}(\beta),
\]
where the three terms correspond to the intervals $ I_1 := \left[\frac{\beta_c}{\beta},\frac{\beta_c}{\beta}+\delta\right], \; I_2 :=\left[\frac{\beta_c}{\beta}+\delta,T\right], \; I_3 := [T,\infty), $ respectively, and
\begin{align} \label{eq:LN-main-before-G-approx}
    Z_N^{L(i)}(\beta)
    &:=
    \frac{N}{2}
    \left(\frac{\pi}{\beta}\right)^{N/2-1}
    \int_{I_i}
    K_N(t)
    e^{-\beta N f(t)}
    e^{\frac{N}{2}G(\gamma_G(t);t)}\,dt .
\end{align}
The equality follows from Lemma \ref{lem:estimate-exp-G}.

We first estimate the main part $Z_N^{L(2)}(\beta)$. From Lemma \ref{lem:G-approximation}, we obtain
\begin{align} \label{eq:LN-main-after-G-approx}
    Z_N^{L(2)}(\beta)
    &=
    \frac{N}{2}
    \left(\frac{\pi}{\beta}\right)^{N/2-1}
    \exp\left\{
        -\frac{N}{2}
        \int_{C_-}^{C_+}\log(C_+-x)\,d\nu(x)
        -N\beta_c(\lambda_1-C_+)
        +O_\prec(1)
    \right\}
    \nonumber \\
    &\quad \times
    \int_{I_2}
    K_N(t)
    \exp\left\{
        \beta N(t\lambda_1-f(t))
    \right\}\,dt
\end{align}
uniformly for $t\in[\beta_c/\beta+\delta,T]$. Since $f$ is strongly convex, $t \mapsto t\lambda_1-f(t)$ is strictly concave and has its unique maximum at $t=\tau_{\lambda_1}$. This implies that the maximum lies in the interval $I_2$ with overwhelming probability by \eqref{eq:tau-lambda1-inside}. {
Note that the low-temperature assumption in Theorem~\ref{thm:general-theorem} gives a holomorphic extension of $f$ near $\tau_{C_+}$. Since $\lambda_1\to C_+$ with overwhelming probability, the same extension contains $\tau_{\lambda_1}$ for all sufficiently large $N$ with overwhelming probability.
}
Therefore, Laplace's method gives
\begin{equation} \label{eq:radial-laplace-low}
    \int_{I_2}
    e^{\beta N(t\lambda_1-f(t))}\,dt
    =
    e^{\beta N V(\lambda_1)}
    \sqrt{\frac{2\pi}{\beta N f''(\tau_{\lambda_1})}}
    \left(1+O(N^{-1})\right),
\end{equation}
where $ V(\lambda_1) = \lambda_1\tau_{\lambda_1}-f(\tau_{\lambda_1}). $ Since $K_N(t)$ is bounded above and below by polynomial powers of $N$ uniformly in $t \in I_2$, \eqref{eq:radial-laplace-low} implies that there exists a random factor $\widetilde K_N$ satisfying $ N^{-C}<\widetilde K_N<C $ such that
\begin{align}
    Z_N^{L(2)}(\beta)
    &=
    \frac{\widetilde K_N N}{2}
    \left(\frac{\pi}{\beta}\right)^{N/2-1}
    \exp\left\{
        -\frac{N}{2}
        \int_{C_-}^{C_+}\log(C_+-x)\,d\nu(x)
        -N\beta_c(\lambda_1-C_+) + O_\prec(1)
    \right\}
    \nonumber \\
    &\quad \times
    e^{\beta N V(\lambda_1)}
    \sqrt{\frac{2\pi}{\beta N f''(\tau_{\lambda_1})}}
    \label{eq:LN-main-final}
\end{align}

It remains to check that the two complementary parts $Z_N^{L(1)}$ and $Z_N^{L(3)}$ are negligible. First note that, by \eqref{eq:tau-lambda1-inside}, the intervals $I_1$ and $I_3$ stay a positive distance away from the maximizer $\tau_{\lambda_1}$ with overwhelming probability. Let
\[
\Phi_N(t) := \beta(\gamma_G(t)t - f(t)) - \frac{1}{2N}\sum_{i=1}^N \log(\gamma_G(t) - \lambda_i)
\]
for $t > \frac{\beta_c}{\beta}$. Then
\[
\Phi_N'(t) = \beta(\gamma_G(t)-f'(t)).
\]
$\Phi_N'(t)$ is decreasing because $\gamma_G(t)$ is decreasing and, by strong convexity of $f$, $f'$ is increasing. Now let $t_*$ be the critical point of $\Phi_N$. Then $t_*$ satisfies
\[
\gamma_G(t_*)=f'(t_*).
\]
By the definition of $\tau_z$, this implies
\[
t_*=\tau_{\gamma_G(t_*)}.
\]
We claim that $t_*$ stays inside $(\beta_c/\beta+\delta,T)$ with overwhelming probability.
{
Choose a deterministic constant $\eta>0$ such that
\[
[\tau_{C_+}-2\eta,\tau_{C_+}+2\eta]
\subset
(\beta_c/\beta+\delta,T).
\]
Since $\lambda_1\to C_+$ and $z\mapsto\tau_z$ is continuous, with overwhelming probability
\[
[\tau_{\lambda_1}-\eta,\tau_{\lambda_1}+\eta]
\subset
(\beta_c/\beta+\delta,T).
\]
}
By strong convexity of $f$, there exists $m>0$ such that
\[
f'(\tau_{\lambda_1}+\eta)\geq\lambda_1+m\eta,
\qquad
f'(\tau_{\lambda_1}-\eta)\leq\lambda_1-m\eta.
\]
By Lemma \ref{lem:critical-point-G},
\[
\gamma_G(\tau_{\lambda_1} \pm \eta) = \lambda_1 + O_\prec(N^{-1}).
\]
Hence, we have
\[
\Phi_N'(\tau_{\lambda_1} - \eta) \geq \beta(\lambda_1 + O_\prec(N^{-1}) - \lambda_1 + m\eta) > 0
\]
with overwhelming probability. Similarly,
\[
\Phi_N'(\tau_{\lambda_1} + \eta) \leq \beta(\lambda_1 + O_\prec(N^{-1}) - \lambda_1 - m\eta) < 0
\]
with overwhelming probability. Since $\Phi_N'(t)$ is a decreasing function, the claim holds.

{
We next compare the size of the main contribution $Z_N^{L(2)}$ with the value $e^{N\Phi_N(t_*)}$. Since $t_*\in I_2$ with overwhelming probability, Lemma~\ref{lem:critical-point-G} gives
\[
\gamma_G(t_*)=\lambda_1+O_\prec(N^{-1}).
\]
Using $\gamma_G(t_*)=f'(t_*)$ and $f'(\tau_{\lambda_1})=\lambda_1$, strong convexity of $f$ implies
\[
|t_*-\tau_{\lambda_1}|=O_\prec(N^{-1}).
\]
Moreover, Lemma~\ref{lem:G-approximation} yields, uniformly on $I_2$,
\[
\Phi_N(t)
=
-\frac12\int_{C_-}^{C_+}\log(C_+-x)\,d\nu(x)
-\beta_c(\lambda_1-C_+)
+\beta\bigl(t\lambda_1-f(t)\bigr)
+O_\prec(N^{-1}).
\]
Since $t\mapsto t\lambda_1-f(t)$ has its maximum at $\tau_{\lambda_1}$ and
$t_*-\tau_{\lambda_1}=O_\prec(N^{-1})$, we obtain
\[
\Phi_N(t_*)
=
-\frac12\int_{C_-}^{C_+}\log(C_+-x)\,d\nu(x)
-\beta_c(\lambda_1-C_+)
+\beta V(\lambda_1)
+O_\prec(N^{-1}).
\]
Combining this identity with \eqref{eq:LN-main-final}, the lower bound
$\widetilde K_N\geq N^{-C}$, and the fact that $f''(\tau_{\lambda_1})$ stays bounded above and below by positive constants with overwhelming probability, we obtain the following slightly weaker bound. For any fixed $0<\rho<1$,
\begin{equation}\label{eq:ZL2-lower-Phi}
Z_N^{L(2)}(\beta)
\geq
\frac{N}{2}\left(\frac{\pi}{\beta}\right)^{N/2-1}
N^{-C}\exp\{N\Phi_N(t_*)-N^\rho\}
\end{equation}
with overwhelming probability, after enlarging $C$ if necessary. Here the factor $e^{-N^\rho}$ absorbs the $O_\prec(1)$ term in the exponent of \eqref{eq:LN-main-final}.
}

Therefore there exists $c>0$ such that
\[
\Phi_N(\beta_c/\beta + \delta) \leq \Phi_N(t_*) - c
\]
with overwhelming probability. Thus, using that $0 < K_N(t) \leq C$ uniformly, we have
{
\[
\begin{aligned}
Z_N^{L(1)}(\beta)
&=
\frac{N}{2}\left(\frac{\pi}{\beta}\right)^{N/2-1}
\int_{I_1}K_N(t)e^{N\Phi_N(t)}\,dt \\
&\leq
\frac{CN}{2}\left(\frac{\pi}{\beta}\right)^{N/2-1}
|I_1|e^{N\Phi_N(t_*)-cN}.
\end{aligned}
\]
Comparing this with \eqref{eq:ZL2-lower-Phi}, we get
\[
\frac{Z_N^{L(1)}(\beta)}{Z_N^{L(2)}(\beta)}
\leq N^C e^{-cN+N^\rho}
=
O(e^{-c'N})
\]
for some $c'>0$, with overwhelming probability.
}
Now we estimate $Z_N^{L(3)}$. By the strong convexity of $f$ and the condition $f(x)/x \to \infty$ as $x \to \infty$, there exists $c_1, c_2 > 0$ such that
\[
\Phi_N(t) \leq \Phi_N(t_*) - c_1 - c_2(t-T), \quad t \geq T
\]
for some large constant $T$. Therefore, we have
\[
\begin{aligned}
Z_N^{L(3)}(\beta) &= \frac{N}{2} \left( \frac{\pi}{\beta} \right)^{N/2 -1} \int_{I_3} K_N(t) e^{N\Phi_N(t)} \, dt \\ &\leq \frac{N}{2} \left( \frac{\pi}{\beta} \right)^{N/2 -1} Ce^{N\Phi_N(t_*) - c_1N} \int_{I_3} K_N(t) e^{-N c_2 (t - T)} \, dt \\ &\leq \frac{N}{2} \left( \frac{\pi}{\beta} \right)^{N/2 -1} Ce^{N\Phi_N(t_*) - c_1N} \frac{1}{Nc_2}.
\end{aligned}
\]
{
Comparing the preceding upper bound with \eqref{eq:ZL2-lower-Phi}, we similarly obtain
\[
\frac{Z_N^{L(3)}(\beta)}{Z_N^{L(2)}(\beta)}
\leq
N^C e^{-c_1N+N^\rho}
=
O(e^{-c'N})
\]
for some $c'>0$, with overwhelming probability.
} Consequently, $Z_N^{L(1)}$ and $Z_N^{L(3)}$ are negligible with respect to $Z_N^{L(2)}$. Hence
\begin{align}
    Z_N^L(\beta)
    &=
    \frac{\widetilde K_N N}{2}
    \left(\frac{\pi}{\beta}\right)^{N/2-1}
    \exp\left\{
        -\frac{N}{2}
        \int_{C_-}^{C_+}\log(C_+-x)\,d\nu(x)
        -N\beta_c(\lambda_1-C_+) + O_\prec(1)
    \right\}
    \nonumber \\
    &\quad \times
    e^{\beta N V(\lambda_1)}
    \sqrt{\frac{2\pi}{\beta N f''(\tau_{\lambda_1})}}
    \label{eq:LN-low-temp-final}
\end{align}
In particular,
\begin{equation} \label{eq:log-LN-low-temp}
    \frac{1}{N}\log Z_N^L(\beta)
    =
    \frac12\log\left(\frac{\pi}{\beta}\right)
    -
    \frac12\int_{C_-}^{C_+}\log(C_+-x)\,d\nu(x)
    -
    \beta_c(\lambda_1-C_+)
    +
    \beta V(\lambda_1)
    +
    O_\prec(N^{-1}).
\end{equation}

\subsection{Estimate of $Z_N^H(\beta)$ for
$\beta \in \left(\beta^*,\infty\right)$}

We now show that the bulk contribution $Z_N^H(\beta)$ is exponentially smaller than the edge contribution $Z_N^L(\beta)$ in the low-temperature regime. Choose $\gamma = \gamma_H$ where
\[
\gamma_H:=\lambda_1+N^{-1+4\epsilon}.
\]
We estimate the contour integral in the definition of $Z_N^H(\beta)$ on the vertical line $\rm{Re} z=\gamma_H$. Recall that
\[
R_N(z) = \int_0^{\beta_c/\beta} e^{N\beta(t z-f(t))}\,dt.
\]
For $z=\gamma_H+\mathrm{i}u$, we have
\[
\left| \exp\left\{ -\frac12\sum_{i=1}^N \log(z-\lambda_i) \right\} R_N(z) \right| \le \exp\left\{ -\frac12\sum_{i=1}^N\log(\gamma_H-\lambda_i) -\frac14\sum_{i=1}^N \log\left(1+\frac{u^2}{(\gamma_H-\lambda_i)^2}\right) \right\} R_N(\gamma_H).
\]
Since $\gamma_H=C_+ + O_\prec(N^{-2/3})$, with overwhelming probability, for all sufficiently large $N$,
\[
f'(s)<\gamma_H, \qquad 0\le s\le \frac{\beta_c}{\beta}.
\]
Thus the function $s\mapsto s\gamma_H-f(s)$ is increasing on $[0,\beta_c/\beta]$. Therefore
\[
\begin{aligned}
R_N(\gamma_H) &= \int_0^{\beta_c/\beta} e^{N\beta(t\gamma_H-f(t))}\,dt  \\ &\le \frac{\beta_c}{\beta} \exp\left\{ N\beta\left( \frac{\beta_c}{\beta}\gamma_H - f\left(\frac{\beta_c}{\beta}\right) \right) \right\}.
\end{aligned}
\]
Combining this with the standard bound on the $u$-integral, we obtain
\[
\begin{aligned}
|Z_N^H(\beta)| &\le C N \left(\frac{\pi}{\beta}\right)^{\frac N2} \exp\left\{ N\beta\left( \frac{\beta_c}{\beta}\gamma_H - f\left(\frac{\beta_c}{\beta}\right) \right) - \frac12\sum_{i=1}^N\log(\gamma_H-\lambda_i) \right\}
\end{aligned}
\]
with overwhelming probability.

We next use Corollary~\ref{cor:edge-approximation} to obtain
\[
\begin{aligned}
\frac1N\sum_{i=1}^N\log(\gamma_H-\lambda_i)
&= \int_{C_-}^{C_+}\log(C_+-x)\,d\nu(x) + 2\beta_c(\gamma_H-C_+) + O_\prec(N^{-1}).
\end{aligned}
\]
Therefore
\[
\begin{aligned}
\frac1N\log |Z_N^H(\beta)| &\le \frac12\log\left(\frac{\pi}{\beta}\right) - \frac12\int_{C_-}^{C_+}\log(C_+-x)\,d\nu(x)  \\ &\quad - \beta_c(\gamma_H-C_+) + \beta\left( \frac{\beta_c}{\beta}\gamma_H - f\left(\frac{\beta_c}{\beta}\right) \right) + O_\prec(N^{-1}).
\end{aligned}
\]
Since the terms containing $\gamma_H$ cancel, this bound can be rewritten as
\[
\begin{aligned}
\frac1N\log |Z_N^H(\beta)| &\le \frac12\log\left(\frac{\pi}{\beta}\right) - \frac12\int_{C_-}^{C_+}\log(C_+-x)\,d\nu(x) - \beta_c(\lambda_1-C_+) \\ &\quad + \beta\left( \frac{\beta_c}{\beta}\lambda_1 - f\left(\frac{\beta_c}{\beta}\right) \right) + O_\prec(N^{-1}).
\end{aligned}
\]
We will compare this with
\[
\begin{aligned}
\frac1N\log Z_N^L(\beta) &= \frac12\log\left(\frac{\pi}{\beta}\right) - \frac12\int_{C_-}^{C_+}\log(C_+-x)\,d\nu(x) - \beta_c(\lambda_1-C_+)  \\ &\quad + \beta V(\lambda_1) + O_\prec(N^{-1}).
\end{aligned}
\]
Comparing the last terms, we have $ V(\lambda_1) > \frac{\beta_c}{\beta}\lambda_1 - f\left(\frac{\beta_c}{\beta}\right) $ with overwhelming probability. This is because $\tau_{\lambda_1}$ remains separated from $\beta_c/\beta$ with overwhelming probability. Since $f$ is strongly convex, there exists a deterministic constant $c>0$ such that
\[
V(\lambda_1) - \left( \frac{\beta_c}{\beta}\lambda_1 - f\left(\frac{\beta_c}{\beta}\right) \right) \ge c
\]
with overwhelming probability, after decreasing $c>0$ if necessary.

Consequently,
\[
\frac1N\log |Z_N^H(\beta)| \le \frac1N\log Z_N^L(\beta)-c
\]
with overwhelming probability. Therefore
\[
|Z_N^H(\beta)|\le e^{-cN} Z_N^L(\beta)
\]
with overwhelming probability.

\section{Proof of the Main Results} \label{sec:proof}

In this section, we collect the estimates obtained in the previous sections and complete the proof of Theorem~\ref{thm:general-theorem}. The GOE case stated in Theorem~\ref{thm:specific-theorem} then follows by specializing to the semicircle law.

\begin{proof}
We first consider the high-temperature regime $0 < \beta < \beta^*$. By the estimates in Section~\ref{sec:high}, the bulk contribution $Z_N^H(\beta)$ dominates the edge contribution $Z_N^L(\beta)$.  More precisely,
\[
Z_N^L(\beta)=o(Z_N^H(\beta))
\]
with overwhelming probability, and hence
\[
\frac1N\log Z_N(\beta) = \frac1N\log Z_N^H(\beta)+o(N^{-1}).
\]
Lemma~\ref{lem:high-temp-approximation} gives
\[
\begin{aligned}
\frac1N\log Z_N^H(\beta) &= \frac12 \left[ J(\widehat\gamma_J) +\log\left(\frac{\pi}{\beta}\right) \right]  \\ &\quad +\frac{1}{2N} \log\left( \frac{2\beta}{f''(\tau_{\widehat{\gamma}_J})J''(\widehat{\gamma}_J)} \right) +o(N^{-1}),
\end{aligned}
\]
with overwhelming probability. Recall the definition $ J(\widehat\gamma_J) = 2\beta V(\widehat\gamma_J) - \frac1N\sum_{i=1}^N \log(\widehat\gamma_J-\lambda_i).$ We find that
\[
\begin{aligned}
F_N(\beta)
&= \beta V(\widehat\gamma_J) +\frac12\log\left(\frac{\pi}{\beta}\right) -\frac12\int_{C_-}^{C_+} \log(\widehat\gamma_J-x)\,d\nu(x) \\ &\quad -\frac{1}{2N} \mathcal{N}_{\varphi} -\frac{1}{2N} \log\left( 1 + \frac{f''(\tau_{\widehat\gamma_J})}{2\beta} \int^{C_+}_{C_-}\frac{d\nu(x)}{(\widehat\gamma_J - x)^2}  \right) \\ &\quad -\frac{1}{2N}\log \left( 1 - \frac{1}{N}\frac{1}{\frac{2\beta}{f''(\tau_{\widehat\gamma_J})} +  \int^{C_+}_{C_-}\frac{d\nu(x)}{(\widehat\gamma_J - x)^2}} \mathcal N_{\varphi''} \right) + o(N^{-1}).
\end{aligned}
\]
Here let $\varphi:\mathbb R\to\mathbb R$ be a compactly supported function which agrees with $x\mapsto \log(\widehat\gamma_J-x)$ on an open neighborhood of $[C_-,C_+]$. Since $\widehat\gamma_J>C_+$, such a function can be chosen to be analytic in a neighborhood of $[C_-,C_+]$. By Condition~\ref{cond:linear-statistics}, $\mathcal{N}_{\varphi}$ converges to the stated Gaussian law, while $\mathcal{N}_{\varphi''}=O_{\mathbb P}(1)$. Hence $\log(1 + \frac{C}{N}\mathcal{N}_{\varphi''})$ converges in probability to $0$, and Slutsky's theorem gives the desired result,
\[
\begin{aligned}
NF_N - N\left( \beta V(\widehat\gamma_J) +\frac12\log\left(\frac{\pi}{\beta}\right) - \frac12\int_{C_-}^{C_+} \log(\widehat\gamma_J-x)\,d\nu(x) \right) +\frac{1}{2} \log\left( 1 + \frac{f''(\tau_{\widehat\gamma_J})}{2\beta} \int^{C_+}_{C_-}\frac{d\nu(x)}{(\widehat\gamma_J - x)^2}  \right)\\ \Rightarrow \mathcal{N}\left(-\frac{1}{2}\rm{Mean}(\varphi),\frac{1}{4}\rm{Var}(\varphi) \right).
\end{aligned}
\]
Equivalently,
\[
N(F_N(\beta)-F(\beta)) \Rightarrow \mathcal N\left( -\frac12\rm{Mean}(\varphi) -\frac12\log\left( 1 + \frac{f''(\tau_{\widehat\gamma_J})}{2\beta} \int^{C_+}_{C_-}\frac{d\nu(x)}{(\widehat\gamma_J - x)^2}  \right), \frac14\rm{Var}(\varphi) \right).
\]
We next consider the low-temperature regime $\beta>\beta^*$. By Section~\ref{sec:low}, the edge contribution $Z_N^L(\beta)$ dominates the bulk contribution $Z_N^H(\beta)$.  That is, $Z_N^H(\beta)=o(Z_N^L(\beta))$ with overwhelming probability, and hence
\[
\frac1N\log Z_N(\beta) = \frac1N\log Z_N^L(\beta)+o(N^{-1}).
\]
Using the estimate \eqref{eq:log-LN-low-temp}, we obtain
\[
\begin{aligned}
F_N(\beta) &= \frac12\log\left(\frac{\pi}{\beta}\right) -\frac12\int_{C_-}^{C_+}\log(C_+-x)\,d\nu(x) -\beta_c(\lambda_1-C_+) +\beta V(\lambda_1) +O_\prec(N^{-1}).
\end{aligned}
\]
Since $V'(C_+)=\tau_{C_+}$ and $\lambda_1-C_+=O_\prec(N^{-2/3})$, Taylor expansion gives
\[
V(\lambda_1) = V(C_+) + \tau_{C_+}(\lambda_1-C_+) + O_\prec(N^{-4/3}).
\]
Therefore
\[
F_N(\beta) - \beta V(C_+) -\frac12\log\left(\frac{\pi}{\beta}\right) +\frac12\int_{C_-}^{C_+} \log(C_+-x)\,d\nu(x) = \bigl(\beta\tau_{C_+}-\beta_c\bigr) (\lambda_1-C_+) + O_\prec(N^{-1}).
\]
The Tracy--Widom limit now follows from Condition~\ref{cond:tracy-widom}.

Suppose now that $\tau_{C_+}>0$, so that $\beta^*<\infty$. It remains to consider the critical case $\beta=\beta^*$. Define
\[
F(\beta^*)
:=
\beta^*V(C_+)
+\frac12\log\left(\frac{\pi}{\beta^*}\right)
-\frac12\int_{C_-}^{C_+}\log(C_+-x)\,d\nu(x).
\]
Since \(\widehat\gamma(\beta)\downarrow C_+\) as \(\beta\uparrow\beta^*\), the high- and low-temperature expressions for \(F\) converge to the same value \(F(\beta^*)\). Hence \(F\) is continuous at \(\beta^*\).

Note that when $\beta^*<\infty$, the high- and low-temperature expressions for $F$ extend continuously to the same value at $\beta^*$. The convergence in probability at $\beta=\beta^*$ then follows from the convexity of $F_N$ and the continuity of $F$.
\end{proof}

{
\begin{proof}[Proof of Theorem~\ref{thm:specific-theorem}]
For the GOE, the limiting spectral measure is the semicircle law
\[
d\nu(x)=\frac{1}{2\pi}\sqrt{4-x^2}\,\mathbf 1_{[-2,2]}(x)\,dx.
\]
Hence
\[
C_+=2,\qquad s_\nu=\frac1\pi,\qquad \beta_c=\frac12.
\]
For $f(t)=\frac{\kappa}{2}t^2$,
\[
\tau_z=\frac{z}{\kappa},
\qquad
V(z)=\frac{z^2}{2\kappa} \quad \text{for } z>0,
\]
and therefore
\[
\beta^*=\frac{\beta_c}{\tau_{C_+}}=\frac{\kappa}{4}.
\]

Suppose first that $0<\beta<\kappa/4$, and set
\[
a:=\frac{2\beta}{\kappa}\in(0,1/2).
\]
Using the Stieltjes transform of the semicircle law, the saddle-point
equation gives
\[
\widehat\gamma
=
\frac{1}{\sqrt{a(1-a)}}.
\]
For
$\varphi(x)=\log(\widehat\gamma-x)$ on a neighborhood of $[-2,2]$,
the GOE linear-statistics formulas in
\cite[Section~3]{BaikLee2016} give
\[
\operatorname{Mean}(\varphi)
=
\frac12\log\left(\frac{1-2a}{1-a}\right),
\qquad
\operatorname{Var}(\varphi)
=
-2\log\left(\frac{1-2a}{1-a}\right).
\]
Moreover,
\[
\int_{-2}^{2}\frac{d\nu(x)}{(\widehat\gamma-x)^2}
=
\frac{a}{1-2a},
\]
so that
\[
1+\frac{\kappa}{2\beta}
\int_{-2}^{2}\frac{d\nu(x)}{(\widehat\gamma-x)^2}
=
\frac{2(1-a)}{1-2a}.
\]
Substituting these identities into Theorem~\ref{thm:general-theorem}(ii) yields
\[
\ell_\kappa(\beta)
=
\frac14\log\left(\frac{1-2a}{1-a}\right)
-\frac12\log2
=
\frac14\log\left(
\frac{1-4\beta/\kappa}{1-2\beta/\kappa}
\right)
-\frac12\log2,
\]
and
\[
\sigma_\kappa^2(\beta)
=
-\frac12\log\left(
\frac{1-4\beta/\kappa}{1-2\beta/\kappa}
\right).
\]

If $\beta>\kappa/4$, then
\[
(s_\nu\pi)^{2/3}=1,
\qquad
\beta\tau_{C_+}-\beta_c
=
\frac{2\beta}{\kappa}-\frac12.
\]
Thus Theorem~\ref{thm:general-theorem}(iii) gives precisely the low-temperature statement. The formula for the limiting free energy follows by substituting the semicircle law and the quadratic confinement into Theorem~\ref{thm:general-theorem}(i), which gives \eqref{eq:GOE-Flimit}.
\end{proof}
}

We now discuss the regularity of the limiting free energy. To study the regularity of the limiting free energy $F(\beta)$, additional regularity of the confinement function $f$ is needed. In the high-temperature regime, the critical point $\widehat{\gamma}=\widehat{\gamma}_J(\beta)$ varies with $\beta$ and is determined implicitly through
\[
\beta \tau_{\widehat{\gamma}} = \frac12\int_{C_-}^{C_+} \frac{d\nu(x)}{\widehat{\gamma}-x}, \qquad f'(\tau_z)=z.
\]
Thus, differentiating the high-temperature expression for $F(\beta)$ requires differentiability of the map $z\mapsto\tau_z$, and hence sufficient regularity of $f$ on the entire range of radial saddle points. As $\beta$ varies over the high-temperature regime, $\widehat{\gamma}_J(\beta)$ takes values in $(C_+,\infty)$, and consequently $\tau_{\widehat{\gamma}_J(\beta)}$ takes values in $(\tau_{C_+},\infty)$. The regularity at $\tau_{C_+}$ is also needed when matching the derivatives of the high- and low-temperature branches at the critical point. For this reason, in the following theorem we assume that $f$ is analytic on an open interval containing $[\tau_{C_+},\infty)$.

\begin{theorem}[Regularity of the limiting free energy]
Let $\tau_{C_+} > 0$. Suppose $f$ is analytic on an open interval containing $[\tau_{C_+},\infty)$. Then the limiting free energy $F(\beta)$ appearing in Theorem~\ref{thm:general-theorem} is $C^2$ but $\partial_\beta^3 F(\beta)$ is discontinuous at $\beta = \beta^*:=\frac{\beta_c}{\tau_{C_+}}$.
\end{theorem}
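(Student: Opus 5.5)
The plan is to compute the first three $\beta$-derivatives of the two branches of $F$ from Theorem~\ref{thm:general-theorem}(i) separately and compare their one-sided limits at $\beta^*$. Write $F_L$ for the low-temperature branch and $F_H$ for the high-temperature branch.

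\emph{Low-temperature branch.} $F_L(\beta)=\beta V(C_+)+\frac12\log(\pi/\beta)-\frac12\int\log(C_+-x)\,d\nu(x)$ is explicit. It is real-analytic on $(\beta^*,\infty)$ and
\[
F_L'=V(C_+)-\tfrac1{2\beta},\qquad F_L''=\tfrac1{2\beta^2},\qquad F_L'''=-\tfrac1{\beta^3}.
\]

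\emph{High-temperature branch: smoothness.} Set $S(\gamma):=\int_{C_-}^{C_+}\frac{d\nu(x)}{\gamma-x}$. Then $\widehat\gamma(\beta)$ is defined by $\Psi(\beta,\gamma):=\beta\tau_\gamma-\frac12S(\gamma)=0$. Since $f$ is analytic on a neighborhood of $[\tau_{C_+},\infty)$ and $f''>0$, the map $\gamma\mapsto\tau_\gamma$ is analytic on $(C_+,\infty)$, with $\tau_\gamma'=1/f''(\tau_\gamma)$. Moreover
\[
\partial_\gamma\Psi=\frac{\beta}{f''(\tau_\gamma)}-\tfrac12S'(\gamma)>0,\qquad\text{since } S'<0.
\]
The analytic implicit function theorem therefore makes $\beta\mapsto\widehat\gamma(\beta)$ analytic on $(0,\beta^*)$, and hence $F_H$ is analytic there.

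\emph{High-temperature branch: derivatives.} The saddle equation says exactly that $\partial_\gamma\bigl[\beta V(\gamma)-\frac12\int\log(\gamma-x)\,d\nu\bigr]=0$ at $\gamma=\widehat\gamma$. By the envelope argument,
\[
F_H'(\beta)=V(\widehat\gamma)-\tfrac1{2\beta}.
\]
Differentiating again, using $V'=\tau$,
\[
F_H''=\tau_{\widehat\gamma}\,\widehat\gamma'+\tfrac1{2\beta^2},\qquad F_H'''=\frac{(\widehat\gamma')^2}{f''(\tau_{\widehat\gamma})}+\tau_{\widehat\gamma}\,\widehat\gamma''-\tfrac1{\beta^3}.
\]
Implicit differentiation of $\Psi=0$ gives
\[
\widehat\gamma'(\beta)=\frac{\tau_{\widehat\gamma}}{\tfrac12S'(\widehat\gamma)-\beta/f''(\tau_{\widehat\gamma})}.
\]

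\emph{Edge asymptotics (the main obstacle).} The key input is the behavior of $S'$ near the edge. Using the square-root decay \eqref{eq:s-nu} of Condition~\ref{cond:regularity-measure}, I expect
\[
S'(\gamma)=-\int\frac{d\nu(x)}{(\gamma-x)^2}=-\pi s_\nu(\gamma-C_+)^{-1/2}+O(1)\qquad\text{as }\gamma\downarrow C_+.
\]
To prove this, substitute $x=C_+-u$ and use $\int_0^\infty\sqrt u\,(\epsilon+u)^{-2}\,du=\frac{\pi}{2}\epsilon^{-1/2}$. The $O(C_+-x)$ correction in the density, and the part of $\nu$ away from the edge, each contribute $O(1)$. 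This is the step needing the most care, since the lower-order terms must be controlled well enough to survive one further differentiation.

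\emph{Matching the first two derivatives.} Continuity of $F$ at $\beta^*$ is already established, and $\widehat\gamma(\beta)\downarrow C_+$ as $\beta\uparrow\beta^*$. Hence $F_H'(\beta^*-)=V(C_+)-\frac1{2\beta^*}=F_L'(\beta^*+)$. From the edge asymptotics, $\widehat\gamma'\to0$, so $F_H''(\beta^*-)=\frac1{2(\beta^*)^2}=F_L''(\beta^*+)$. Together with analyticity on each side, this makes $F$ of class $C^2$.

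\emph{Jump in the third derivative.} Write $\epsilon=\widehat\gamma-C_+$. The edge asymptotics give
\[
\widehat\gamma'\approx-\frac{2\tau_{C_+}}{\pi s_\nu}\,\epsilon^{1/2}.
\]
Integrating this ODE yields $\epsilon\sim\bigl(\frac{\tau_{C_+}}{\pi s_\nu}\bigr)^2(\beta^*-\beta)^2$. I would make this rigorous by passing to the variable $w=\epsilon^{1/2}$, in which the implicit equation $\Psi=0$ becomes $C^1$-regular with nonzero $w$-derivative. Then $\widehat\gamma''\to2\bigl(\frac{\tau_{C_+}}{\pi s_\nu}\bigr)^2$, while $(\widehat\gamma')^2\to0$. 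Consequently
\[
F_H'''(\beta^*-)-F_L'''(\beta^*+)=\frac{2\tau_{C_+}^3}{\pi^2s_\nu^2}\neq0,
\]
because $\tau_{C_+}>0$. This proves that $\partial_\beta^3F$ is discontinuous at $\beta^*$.
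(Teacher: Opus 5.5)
Your argument is essentially the paper's: the envelope identity for $F_H'$, implicit differentiation of the saddle equation, and the square-root edge asymptotics, which give $\widehat\gamma'\to0$ (hence $C^2$) and a positive limit of $\widehat\gamma''$ (hence the jump).

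\textbf{A factor-of-two slip in the key asymptotic.} The Beta integral you cite gives
\[
\int_{C_-}^{C_+}\frac{d\nu(x)}{(\gamma-x)^2}=\frac{\pi s_\nu}{2}\,(\gamma-C_+)^{-1/2}+O(1),
\]
not $\pi s_\nu(\gamma-C_+)^{-1/2}$. With the correct constant you get:
\begin{itemize}
\item $\widehat\gamma'=-\frac{4\tau_{C_+}}{\pi s_\nu}\epsilon^{1/2}+O(\epsilon)$;
\item $\epsilon\sim\frac{4\tau_{C_+}^2}{\pi^2s_\nu^2}(\beta^*-\beta)^2$;
\item $\widehat\gamma''\to\frac{8\tau_{C_+}^2}{\pi^2s_\nu^2}$.
\end{itemize}
The jump of $F'''$ is therefore $\frac{8\tau_{C_+}^3}{\pi^2s_\nu^2}$, as in the paper. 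Your value $\frac{2\tau_{C_+}^3}{\pi^2s_\nu^2}$ is off by a factor $4$. As a check, for the GOE with $f(t)=\kappa t^2/2$, differentiating \eqref{eq:GOE-Flimit} directly gives a jump of $64/\kappa^3$, not $16/\kappa^3$. The theorem only needs the jump to be nonzero, so your conclusion is unaffected.

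\textbf{The rigorous step for the third derivative.} The paper obtains $\lim\widehat\gamma''$ by differentiating the saddle equation twice. This also uses $\int_{C_-}^{C_+}\frac{d\nu(x)}{(\widehat\gamma-x)^3}=\frac{\pi s_\nu}{8}\epsilon^{-3/2}+O(\epsilon^{-1/2})$.

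Your substitution $w=\epsilon^{1/2}$ needs only the leading order of the second moment. However, a $C^1$ implicit-function argument in $w$ gives only $w\in C^1$ up to $\beta^*$. That yields the one-sided derivative of $\widehat\gamma'=2ww'$ at $\beta^*$, not the limit of $\widehat\gamma''=2(w')^2+2ww''$. The limit would require $ww''\to0$, which again needs the third-moment asymptotics.

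What you do get is enough. The left derivative of $F''$ at $\beta^*$ equals $\frac{8\tau_{C_+}^3}{\pi^2s_\nu^2}-(\beta^*)^{-3}$, while the right derivative is $-(\beta^*)^{-3}$. So $F''$ is not differentiable at $\beta^*$, and $F\notin C^3$.

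One technical point: the even extension $w\mapsto S(C_++w^2)$ is not $C^1$ at $w=0$. Either extend by an odd reflection about $2\beta_c$, or argue one-sidedly that $w'=-\partial_\beta\Psi/\partial_w\Psi\to-\frac{2\tau_{C_+}}{\pi s_\nu}$ as $\beta\uparrow\beta^*$.
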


\begin{proof}
For $0<\beta<\beta^*$, let $\widehat\gamma_J=\widehat\gamma_J(\beta)\geq C_+$ be the unique solution of the saddle-point equation
\[
\beta\tau_{\widehat\gamma_J} = \frac12 \int_{C_-}^{C_+}\frac{d\nu(x)}{\widehat\gamma_J-x}.
\]
Take
\[
H(\beta,z)
=
\beta\tau_z-\frac12\int_{C_-}^{C_+}\frac{d\nu(x)}{z-x}.
\]
{
For $z>C_+$,
\[
\partial_z H(\beta,z)
=
\beta\tau'_z
+
\frac12\int_{C_-}^{C_+}\frac{d\nu(x)}{(z-x)^2}
>0.
\]
}
Hence the implicit function theorem shows that $\beta\mapsto\widehat\gamma_J(\beta)$ is smooth on $(0,\beta^*)$. We first determine the behavior of $\widehat\gamma_J$ as $\beta\uparrow\beta^*$. 

Set $a:=\widehat\gamma_J(\beta)-C_+$. Note that by the square-root behavior of $\nu$ in Condition~\ref{cond:regularity-measure}, as $a\downarrow0$,
\[ 
\begin{aligned}
\int_{C_-}^{C_+}
\frac{d\nu(x)}{(\widehat\gamma_J-x)^2} &= \frac{\pi s_\nu}{2\sqrt a}+O(1), \\
\int_{C_-}^{C_+} \frac{d\nu(x)}{(\widehat\gamma_J-x)^3} &= \frac{\pi s_\nu}{8a^{3/2}}+O(a^{-1/2}).
\end{aligned}
\]
Differentiating the saddle-point equation with respect to $\beta$, we obtain
\[
\tau_{\widehat\gamma_J}
+
\left(
\beta\tau'_{\widehat\gamma_J}
+
\frac12
\int_{C_-}^{C_+}
\frac{d\nu(x)}
{(\widehat\gamma_J-x)^2}
\right)
\widehat\gamma_J'(\beta)
=0.
\]
Denote 
\[
D(\beta)
:=
\beta\tau'_{\widehat\gamma_J}
+
\frac12
\int_{C_-}^{C_+}
\frac{d\nu(x)}
{(\widehat\gamma_J-x)^2}.
\]
Since $\tau_{\widehat\gamma_J}=\tau_{C_+} + O(a)$ and $D(\beta) = \frac{\pi s_\nu}{4\sqrt a}+O(1)$, we have
\[
\widehat\gamma_J'(\beta)
=
-\frac{4\tau_{C_+}}{\pi s_\nu}\sqrt a+O(a),
\]
and hence $\widehat\gamma_J'(\beta) \to 0$ as $\beta\uparrow\beta^*$. Differentiating once more, we have
\[
D'(\beta)
=
\tau'_{\widehat\gamma_J}
+
\beta\tau''_{\widehat\gamma_J}\widehat\gamma_J'
-
\left(
\int_{C_-}^{C_+}
\frac{d\nu(x)}
{(\widehat\gamma_J-x)^3}
\right)
\widehat\gamma_J'.
\]
Using the preceding estimates, $ D'(\beta) = \frac{\tau_{C_+}}{2a} + O(a^{-1/2})$. Therefore,
$$
\widehat\gamma_J''
=
-\frac{\tau'_{\widehat\gamma_J}\widehat\gamma_J'}{D}
+
\frac{\tau_{\widehat\gamma_J}D'}{D^2}.
$$
The first term converges to zero, while $D^2 = \frac{\pi^2s_\nu^2}{16a}\bigl(1+o(1)\bigr)$. Thus $\widehat\gamma_J''(\beta) \to \frac{8\tau_{C_+}^2}{\pi^2s_\nu^2}$ as $\beta\uparrow\beta^*$.



We now consider the derivatives of the limiting free energy. For $0<\beta<\beta^*$,
\[
F(\beta) = \beta V(\widehat\gamma_J) +\frac12\log\left(\frac{\pi}{\beta}\right) -\frac12\int_{C_-}^{C_+} \log(\widehat\gamma_J-x)\,d\nu(x).
\]
Differentiating and using $V'(z)=\tau_z$ together with the saddle-point equation, the terms involving $\widehat\gamma'_J(\beta)$ cancel, and we obtain
\[
F'(\beta) = V(\widehat\gamma_J)-\frac1{2\beta}, \qquad F''(\beta) = \tau_{\widehat\gamma_J}\widehat\gamma_J'(\beta) +\frac1{2\beta^2}.
\]
Since $\widehat\gamma_J\to C_+$ and $\widehat\gamma_J'(\beta)\to 0$,
\[
\lim_{\beta\uparrow\beta^*}F'(\beta) = V(C_+)-\frac{1}{2\beta^*}, \qquad \lim_{\beta\uparrow\beta^*}F''(\beta) = \frac1{2(\beta^*)^2}.
\]

On the other hand, for $\beta>\beta^*$,
\[
F(\beta) = \beta V(C_+) +\frac12\log\left(\frac{\pi}{\beta}\right) -\frac12\int_{C_-}^{C_+} \log(C_+-x)\,d\nu(x),
\]
so that
\[
F'(\beta) = V(C_+) -\frac{1}{2\beta}, \qquad F''(\beta)=\frac1{2\beta^2}.
\]
Thus
\[
\lim_{\beta\downarrow\beta^*}F'(\beta) = V(C_+)-\frac{1}{2\beta^*}, \qquad \lim_{\beta\downarrow\beta^*}F''(\beta) = \frac1{2(\beta^*)^2}.
\]
This shows that $F$ is $C^2$.

It remains to examine the third derivative. In the high-temperature regime,
\[
F'''(\beta) = \tau'_{\widehat\gamma_J} \bigl(\widehat\gamma_J'(\beta)\bigr)^2 + \tau_{\widehat\gamma_J}\widehat\gamma_J''(\beta) -\frac1{\beta^3}.
\]
Therefore,
\[
\lim_{\beta\uparrow\beta^*}F'''(\beta) = \frac{8\tau_{C_+}^3}{\pi^2s_\nu^2} -\frac1{(\beta^*)^3}.
\]
In the low-temperature regime,
\[
F'''(\beta)=-\frac1{\beta^3},
\]
and hence
\[
\lim_{\beta\downarrow\beta^*}F'''(\beta) = -\frac1{(\beta^*)^3}.
\]
Since $\tau_{C_+}>0$ and $s_\nu>0$, the two one-sided limits differ by
\[
\frac{8\tau_{C_+}^3}{\pi^2s_\nu^2}>0.
\]
Thus $F$ is $C^2$ but not $C^3$ at $\beta=\beta^*$.
\end{proof}
\subsection{Hard spherical limit}

We explain how the present soft model approaches the usual spherical Sherrington--Kirkpatrick model as the radial confinement becomes strong. Recall that the limiting free energy of the standard SSK model is
\[
F_{\rm{SSK}}(\beta) =
\begin{cases}
    \beta\gamma_{\rm{SSK}} - \frac{1}{2}\log(2\beta e) - \frac{1}{2}\int_{C_-}^{C_+} \log(\gamma_{\rm{SSK}} - x)\,d\nu(x) \quad &\text{if } 0<\beta<\beta_c, \\
    \beta C_+ -\frac{1}{2}\log(2\beta e) - \frac{1}{2}\int_{C_-}^{C_+}\log(C_+ - x)\,d\nu(x)   &\text{if } \beta > \beta_c,
\end{cases}
\]
where $\gamma_{\rm{SSK}}$ satisfies $\beta =\frac12 \int_{C_-}^{C_+} \frac{d\nu(x)}{\gamma_{\rm SSK}-x}$ and $\beta_c$ is defined by $\beta_c := \frac12 \int_{C_-}^{C_+} \frac{d\nu(x)}{C_+ - x}$.

We also remark that the radial variable in the soft model is $t = \frac{\norm{v}^2}{N}$. In the usual SSK model, the spin configurations satisfy $\norm{\sigma}^2 = N$, and hence the hard spherical constraint corresponds precisely to $t=1$. Therefore, in order to recover the spherical model from the present soft model, it is natural to choose a family of radial potentials whose mass becomes increasingly concentrated around $t=1$.

We consider
\[
f_\kappa(t)=\kappa\phi(t),
\]
where $\phi:\mathbb R_+\to\mathbb R$ is strongly convex, satisfies $\phi(1)=\phi'(1)=0$ and admits the local holomorphic extensions required in Theorem~\ref{thm:general-theorem}, so that, for each fixed $\kappa>0$, $f_\kappa=\kappa\phi$ satisfies the assumptions of that theorem. We further assume that $\phi''$ is continuous and strictly positive in a neighborhood of $1$. Thus $t=1$ is the unique minimizer of $\phi$. As $\kappa\to\infty$, the contribution from values of $t$ away from $1$ is increasingly suppressed, so that the radial variable concentrates near $t=1$.

For $z>C_+$, let
\[
\tau_z^{(\kappa)} := \arg\max_{t\geq0}\{tz-\kappa\phi(t)\}, \qquad V_\kappa(z) := z\tau_z^{(\kappa)} -\kappa\phi(\tau_z^{(\kappa)}).
\]
Since the critical point equation is $\kappa\phi'(\tau_z^{(\kappa)})=z$, $\phi'(1)=0$ and $\phi''(1)>0$, we find that
\[
\tau_z^{(\kappa)} = 1+O(\kappa^{-1}),
\]
uniformly for $z$ in any compact subset of $[C_+,\infty)$. Consequently,
\[
V_\kappa(z)=z+O(\kappa^{-1})
\]
locally uniformly. In particular, as $\kappa \to \infty$,
\[
\beta_\kappa^* = \frac{\beta_c}{\tau_{C_+}^{(\kappa)}} \to \beta_c.
\]

Let $F_\kappa(\beta)$ denote the limiting free energy of the soft model with confinement $f_\kappa$, and let $F_{\rm SSK}(\beta)$ denote the limiting free energy of the usual spherical SK model with normalized uniform measure on the sphere.

Suppose first that $0<\beta<\beta_c$. For all sufficiently large $\kappa$, we have $\beta<\beta_\kappa^*$. Let $\widehat\gamma_\kappa>C_+$ be the high-temperature saddle point satisfying
\[
\beta\tau_{\widehat\gamma_\kappa}^{(\kappa)} = \frac12 \int_{C_-}^{C_+} \frac{d\nu(x)} {\widehat\gamma_\kappa-x}.
\]
Since $\tau_z^{(\kappa)}\to1$ locally uniformly, $\widehat\gamma_\kappa$ converges to the unique solution $\gamma_{\rm SSK}>C_+$. Using the high-temperature formula in Theorem~\ref{thm:general-theorem} and $V_\kappa(\widehat\gamma_\kappa) =\widehat\gamma_\kappa+o(1)$, we obtain
\[
F_\kappa(\beta) \to F_{\rm SSK}(\beta) +\frac12\log(2\pi e).
\]

The additional constant does not come from the disorder or from the radial saddle itself, but from the different normalization of the two models.  The soft model is integrated with respect to Lebesgue measure on $\mathbb R^N$, whereas the spherical SK model is integrated with respect to the normalized uniform measure on the sphere $\|\sigma\|^2=N$.  Indeed, the polar decomposition used in Corollary~\ref{cor:soft-integral-representation} contains the deterministic factor
\[
C_N^{\rm Leb} = \frac{\pi^{N/2}N^{N/2}}{\Gamma(N/2)}.
\]
By Stirling's formula,
\[
\frac1N\log C_N^{\rm Leb} = \frac12\log(2\pi e)+o(1).
\]
Thus the term $\frac12\log(2\pi e)$ is the volume normalization associated with passing from normalized spherical measure to Lebesgue measure in radial coordinates. It is independent of the disorder and remains even when the radial variable concentrates at $t=1$.

The same conclusion holds in the low-temperature regime. If $\beta>\beta_c$, then $\beta>\beta_\kappa^*$ for all sufficiently large $\kappa$, and Theorem~\ref{thm:general-theorem} gives
\[
F_\kappa(\beta) = \beta V_\kappa(C_+) +\frac12\log\left(\frac{\pi}{\beta}\right) -\frac12 \int_{C_-}^{C_+} \log(C_+-x)\,d\nu(x).
\]
Since $V_\kappa(C_+)\to C_+$, we again obtain
\[
F_\kappa(\beta) \to F_{\rm SSK}(\beta) +\frac12\log(2\pi e).
\]
Consequently, for every fixed $\beta\neq\beta_c$,
\[
\lim_{\kappa\to\infty}F_\kappa(\beta) = F_{\rm SSK}(\beta) +\frac12\log(2\pi e).
\]
We next compare the fluctuations. For this discussion, we restrict to the standard random-matrix ensembles for which the limiting mean and variance of the linear spectral statistics are given by explicit formulas, such as the Wigner, orthogonally invariant, and real sample covariance ensembles considered in \cite[Section 3]{BaikLee2016}. For these ensembles, the corresponding mean and variance functionals are continuous under locally uniform convergence of analytic test functions on a fixed complex neighborhood of the limiting spectral support. In the high-temperature regime, Theorem~\ref{thm:general-theorem} gives
\[
N\bigl(F_{N,\kappa}(\beta)-F_\kappa(\beta)\bigr) \Rightarrow \mathcal N\bigl(\ell_\kappa(\beta), \sigma_\kappa^2(\beta)\bigr).
\]
Here,
\[
\sigma_\kappa^2(\beta) = \frac14 \rm{Var} \bigl(\varphi_\kappa\bigr)
\]
and
\[
\ell_\kappa(\beta) = -\frac12 \rm{Mean} \bigl(\varphi_\kappa \bigr) -\frac12 \log\left( 1+ \frac{ f_\kappa''( \tau_{\widehat\gamma_\kappa}^{(\kappa)}) }{2\beta} \int_{C_-}^{C_+} \frac{d\nu(x)} {(\widehat\gamma_\kappa-x)^2} \right),
\]
where $\varphi_\kappa:\R \to \R$ is a compactly supported function which agrees with $x \mapsto \log(\widehat\gamma_\kappa - x)$ on an open neighborhood of $[C_-,C_+]$. Since $\widehat\gamma_\kappa\to\gamma_{\rm SSK}$, we find that
\[
\sigma_\kappa^2(\beta) \to \frac14 \rm{Var} \bigl(\varphi_{\rm{SSK}}\bigr),
\]
where $\varphi_{\rm{SSK}}:\R \to \R$ is a compactly supported function which agrees with $x \mapsto \log(\gamma_{\rm{SSK}} - x)$ on an open neighborhood of $[C_-,C_+]$. This limit is precisely the variance of the high-temperature fluctuation of the usual SSK model.

The behavior of the mean is slightly different.  The reason is that the soft model still contains an integration in the radial variable. Near the radial saddle point, Laplace's method produces the Gaussian factor
\[
\left( \frac{2\pi} {N\beta f_\kappa''(\tau_{\widehat\gamma_\kappa}^{(\kappa)})} \right)^{1/2}.
\]
Since
\[
f_\kappa''(t)=\kappa\phi''(t)
\]
and $\tau_{\widehat\gamma_\kappa}^{(\kappa)}\to1$, this factor contains a deterministic factor of order $\kappa^{-1/2}$.  After taking the logarithm, its contribution to the free energy is of order
\[
-\frac{1}{2N}\log\kappa.
\]
This term vanishes in the limiting free energy, but it survives on the $N^{-1}$ fluctuation scale.  Accordingly,
\[
\ell_\kappa(\beta) = -\frac12\log\kappa+O(1).
\]
The divergence of $\ell_\kappa$ therefore does not represent an additional random fluctuation.  It is a deterministic normalization effect caused by the shrinking width of the radial shell around $t=1$.  In the hard spherical model the radius is fixed exactly, so there is no radial integration and no corresponding Laplace prefactor. Thus the random Gaussian part converges to that of the spherical SK model, while the mean contains an additional deterministic radial contribution.

The same radial Laplace factor is also present in the low-temperature regime.  In fact, the Laplace approximation of the radial integral contains
\[
\left( \frac{2\pi} {N\beta f_\kappa''(\tau_{\lambda_1}^{(\kappa)})} \right)^{1/2},
\]
and hence again produces a correction of order
\[
\frac{\log\kappa}{N}
\]
in the free energy.  However, the leading low-temperature fluctuation is of order $N^{-2/3}$ and is generated by the largest eigenvalue. Therefore, for each fixed $\kappa$,
\[
N^{2/3}\frac{\log\kappa}{N} = N^{-1/3}\log\kappa \to0
\]
as $N\to\infty$.  The deterministic radial normalization is therefore invisible at the Tracy--Widom scale.

Indeed, Theorem~\ref{thm:general-theorem} gives
\[
\frac{(s_\nu\pi N)^{2/3}}{\beta\tau_{C_+}^{(\kappa)}-\beta_c} \bigl(F_{N,\kappa}(\beta)-F_\kappa(\beta)\bigr) \Rightarrow \mathrm{TW}_1.
\]
Since $\tau_{C_+}^{(\kappa)}\to1$, $\beta\tau_{C_+}^{(\kappa)}-\beta_c \to \beta-\beta_c$ and hence the low-temperature Tracy--Widom fluctuation converges directly to that of the usual spherical SK model.

We emphasize that the fluctuation result is proved for each fixed $\kappa$ with $N\to\infty$, and the limit $\kappa\to\infty$ is taken afterwards.  No uniformity in $\kappa$ is claimed here.

In summary, the hard-constraint limit recovers the critical temperature and the random fluctuation mechanism of the usual spherical SK model.  At the level of the limiting free energy, the only difference is the deterministic volume normalization $\frac12\log(2\pi e)$.  At the fluctuation level, the radial Laplace factor produces an additional deterministic term in the high-temperature mean, while the same term is negligible on the $N^{-2/3}$ Tracy--Widom scale in the low-temperature regime.

\subsection*{Acknowledgments}
This work was partially supported by National Research Foundation of Korea under grant number RS-2023-NR076695.

\newpage
\appendix

\section{Auxiliary proofs and technical estimates} \label{sec:estimate}
\subsection{Approximation estimates}\label{app:approximation-estimates}

In this appendix, we prove Corollaries~\ref{cor:bulk-approximation} and~\ref{cor:edge-approximation}. We only use the rigidity estimate in Condition~\ref{cond:rigidity-eigenvalues} and the square-root behavior of the limiting density near the upper edge.

\begin{proof}[Proof of Corollary~\ref{cor:bulk-approximation}]
Fix $\delta>0$. For $z\ge C_++\delta$, the functions
\[
x\mapsto \frac1{z-x}, \qquad x\mapsto \log(z-x)
\]
are smooth on a fixed neighborhood of $[C_-,C_+]$, uniformly in $z\ge C_++\delta$. Recall that by Condition \ref{cond:rigidity-eigenvalues}, for any $\epsilon>0$, with overwhelming probability,
\[
|\lambda_i-\xi_i| \le \widehat{i}^{-1/3}N^{-2/3+\epsilon}, \qquad 1\le i\le N.
\]
Hence, uniformly in $z\ge C_++\delta$,
\[
\begin{aligned}
\left| \frac1N\sum_{i=1}^N \left(\frac1{z-\lambda_i}-\frac1{z-\xi_i}\right) \right| \le \frac{C}{N}\sum_{i=1}^N |\lambda_i-\xi_i| &\le C N^{-1+\epsilon},
\end{aligned}
\]
and similarly
\[
\left| \frac1N\sum_{i=1}^N \bigl(\log(z-\lambda_i)-\log(z-\xi_i)\bigr) \right| \le C N^{-1+\epsilon}.
\]
Since $\epsilon>0$ is arbitrary, these errors are $O_\prec(N^{-1})$.

It remains to compare the sums over the classical locations with the corresponding integrals. Let $q_i$ be defined by
\begin{equation} \label{eq:hat-gamma}
    \int_{q_i}^{C_+} d\nu = \frac{i}{N},\qquad i=1,\dots,N,
\end{equation}
with $q_0 = C_+$. Then $I_i:=[q_i,q_{i-1}]$ satisfies $\nu(I_i)=1/N$, and $\xi_i\in I_i$. Hence, for any $C^1$ function $\phi$ on $[C_-,C_+]$,
\[
\left| \frac1N\sum_{i=1}^N \phi(\xi_i) - \int_{C_-}^{C_+}\phi(x)\,d\nu(x) \right| \le \sum_{i=1}^N\int_{I_i} |\phi(\xi_i)-\phi(x)|\,d\nu(x) \le \frac{C_+-C_-}{N}\|\phi'\|_\infty.
\]
Applying this estimate to $\phi(x)=(z-x)^{-1}$ and $\phi(x)=\log(z-x)$, whose derivatives are uniformly bounded for $z\ge C_++\delta$, we obtain
\[
\frac1N\sum_{i=1}^N \frac1{z-\xi_i} = \int_{C_-}^{C_+}\frac{d\nu(x)}{z-x} + O(N^{-1}),
\]
and
\[
\frac1N\sum_{i=1}^N \log(z-\xi_i) = \int_{C_-}^{C_+}\log(z-x)\,d\nu(x) + O(N^{-1}).
\]
\end{proof}

\begin{proof}[Proof of Corollary~\ref{cor:edge-approximation}]
{
We first prove the resolvent estimate. Fix the exponent $\epsilon>0$ appearing in the statement of the corollary, and choose an auxiliary exponent $\eta>0$ such that $4\eta<\epsilon$. Since $z-\lambda_1\geq N^{-1+\epsilon}$, we also have
\[
z-\lambda_1\geq N^{-1+4\eta}
\]
for all sufficiently large $N$. In the estimates below, $\eta$ is used only as this auxiliary small exponent. The contribution of the sum over $i = 1,...,N^{3\eta}$ is negligible because
\[
\frac1N\sum_{i=1}^{N^{3\eta}}\frac1{z-\lambda_i} \leq N^{-\eta} = o(1).
\]
Moreover, the sum over $i =N - N^{3\eta}+1 ,..., N$ is also negligible since
\[
\frac1N \sum_{i=N - N^{3\eta}+1}^N \frac{1}{z-\lambda_i} \le CN^{-1+3\eta}.
\]
Thus it suffices to consider the sum over $ N^{3\eta}+1 \le i \le N - N^{3\eta}$. In the formula $z - \lambda_i = (z - \lambda_1) + (\lambda_1 - C_+) + (C_+ - \xi_i) +(\xi_i - \lambda_i)$, $C_+ - \xi_i$ is the dominating term by Condition~\ref{cond:rigidity-eigenvalues}. Hence
\[
\frac1N \sum_{i = N^{3\eta}+1}^{N - N^{3\eta}} \frac{1}{z-\lambda_i} = \frac1N \sum_{i = N^{3\eta}+1}^{N - N^{3\eta}} \frac{1}{(C_+ - \xi_i)}(1+ o(1))
\]
with overwhelming probability. It remains to justify the convergence of the deterministic sum. Since $x \mapsto \frac{1}{C_+ - x}$ on $x < C_+$ is an increasing function, we obtain
\[
    \frac1N \frac{1}{C_+ - \xi_i} \leq \int_{\xi_i}^{\xi_{i-1}} \frac{1}{C_+ - x} \; d\nu(x) \leq \frac1N \frac{1}{ C_+ - \xi_{i-1}}.
\]
Summing over $i$, we find that
\[
\begin{aligned}
     \left|
    \frac1N\sum_{i = N^{3\eta}+1}^{N - N^{3\eta}}\frac{1}{C_+ - \xi_i}
    -
    \int^{\xi_{N^{3\eta}}}_{\xi_{N-N^{3\eta}}} \frac{d\nu(x)}{C_+-x}
    \right|
    &\leq
    \frac{1}{N}\sum_{i = N^{3\eta}+1}^{N - N^{3\eta}} \left( \frac{1}{C_+ - \xi_{i-1}} - \frac{1}{C_+ - \xi_i} \right)     \\
    &= \frac{1}{N} \left( \frac{1}{C_+ - \xi_{N^{3\eta}}} - \frac{1}{C_+ - \xi_{N-N^{3\eta}}} \right) \\
    &= O(N^{-1/3-2\eta}).
\end{aligned}
\]
Hence,
\begin{align}  \label{eq:resolvent-det}
    \left|
    \frac1N\sum_{i = N^{3\eta}+1}^{N - N^{3\eta}}\frac{1}{C_+ - \xi_i}
    -
    \int_{C_-}^{C_+} \frac{d\nu(x)}{C_+-x}
    \right|
    &\leq
    O(N^{-1/3 - 2\eta})
    +
    C \left(
    \int_{C_-}^{\xi_{N - N^{3\eta}}}\frac{d\nu(x)}{C_+-x}
    +
    \int^{C_+}_{\xi_{N^{3\eta}}}\frac{d\nu(x)}{C_+-x}
    \right) \nonumber \\
    & = O(N^{-1/3+\eta}).
\end{align}
In conclusion, we have
\[
\frac1N\sum_{i=1}^N\frac1{z-\lambda_i} = 2\beta_c + o(1).
\]

We now prove the logarithmic estimate. Using the same argument as above, we have
\[
\begin{aligned}
\left| \frac1N\sum_{i=1}^{N^{3\eta}} \log(z-\lambda_i)\right| &\le C\frac{N^{3\eta}\log N}{N}, \\ \left| \frac1N\sum_{i=N - N^{3\eta} +1}^{N} \log(z-\lambda_i)\right| &\le CN^{-1+3\eta}.
\end{aligned}
\]
For $N^{3\eta} < i < N - N^{3\eta}$ we may expand
\[
\log(z-\lambda_i) = \log(C_+-\xi_i) + \frac{z-C_+}{C_+-\xi_i} + R_i,
\]
where $R_i$ is the remainder term. Note that $R_i$ satisfies
\[
\begin{aligned}
R_i = \log\left( 1 + \frac{z - C_+}{C_+ - \xi_i} + \frac{\xi_i - \lambda_i}{C_+ - \xi_i} \right) - \frac{z-C_+}{C_+-\xi_i} = O\left(\left(  \frac{z - C_+}{C_+ - \xi_i} \right)^2\right) + O\left( \frac{\lambda_i - \xi_i}{C_+ - \xi_i}\right).
\end{aligned}
\]

Using Condition \ref{cond:rigidity-eigenvalues}, the first error term can be estimated by
\[
O\left(\left(  \frac{z - C_+}{C_+ - \xi_i} \right)^2\right) \le C\frac{N^{-4/3 + 2\eta}}{i^{4/3}N^{-4/3}},
\]
and the second error term satisfies
\[
\left|
\frac{\lambda_i-\xi_i}{C_+-\xi_i}
\right|
\le
\begin{cases}
C N^\eta i^{-1},
&\text{for }  N^{3\eta}<i\le N/2, \\
C N^{-2/3+\eta}(N+1-i)^{-1/3},
&\text{for } N/2<i<N-N^{3\eta},
\end{cases}
\]
with overwhelming probability. Hence summing over $i$ gives
\[
\frac1N \sum_{i=N^{3\eta}+1}^{N - N^{3\eta}} |R_i| = O_\prec(N^{-1}).
\]
Now the terms $\log(C_+ - \xi_i)$ and $\frac{z-C_+}{C_+ - \xi_i}$ should be estimated. The logarithmic term is given by
\[
\begin{aligned}
&\left| \frac1N \sum_{i=N^{3\eta}+1}^{N - N^{3\eta}} \log (C_+ - \xi_i) - \int_{C_-}^{C_+} \log (C_+ - x) d\nu(x) \right| \\
&\le \frac{1}{N}\pbb{\log(C_+ - \xi_{N-N^{3\eta}}) - \log(C_+ - \xi_{N^{3\eta}})} + {C\left(
\left|\int_{C_-}^{\xi_{N-N^{3\eta}}}\log(C_+-x)\,d\nu(x)\right|
+
\left|\int_{\xi_{N^{3\eta}}}^{C_+}\log(C_+-x)\,d\nu(x)\right|
\right)} \\ 
&\le O(N^{-1}\log N) + C \pB{N^{-1 + 3\eta} + \int_0^{N^{-2/3+2\eta}} \sqrt{u}|\log u| \; du} = O_\prec(N^{-1}).
\end{aligned}
\]
For the resolvent term, \eqref{eq:resolvent-det} gives
\[
\frac{z-C_+}{N} \sum_{i=N^{3\eta}+1}^{N-N^{3\eta}}\frac1{C_+-\xi_i} = 2\beta_c(z-C_+) + O_\prec(N^{-1}).
\]
Consequently,
\[
\frac1N\sum_{i=1}^N\log(z-\lambda_i) = \int_{C_-}^{C_+}\log(C_+-x)\,d\nu(x) + (z-C_+)\int_{C_-}^{C_+}\frac{d\nu(x)}{C_+-x} + O_\prec(N^{-1}).
\]
{
Here the auxiliary exponent $\eta>0$ may be chosen arbitrarily small subject to $4\eta<\epsilon$; hence the displayed deterministic $N^{-1+O(\eta)}$ errors are absorbed into the notation $O_\prec(N^{-1})$.
}
}
\end{proof}

\subsection{Integral representation}\label{app:integral-representation}
\subsubsection*{Proof of Lemma~\ref{lem:integral-representation}}
\begin{proof}
We consider the following integral:
\begin{equation}
    L(z) := \int_{\R^N} e^{\beta N \sum_{i}(\lambda_i - z)y_i^2} d\mathbf{y}, \qquad \re{z} > \lambda_1.
\end{equation}
We evaluate this integral in two different ways: first by computing the Gaussian integral coordinatewise, and then by using polar coordinates.

First, evaluating the Gaussian integral coordinatewise gives
\[
L(z) = \left( \frac{\pi}{\beta N} \right)^{N/2} \prod_{i=1}^N \frac{1}{\sqrt{z-\lambda_i}}.
\]
On the other hand, applying the polar decomposition
\[
\mathbf{y} = \sqrt{\frac{t}{\beta N}}\mathbf{x},\quad t = \beta N \norm{\mathbf{y}}^2, \quad \mathbf{x}\in S^{N-1} := \{ \mathbf{x} \in \R^N : \norm{\mathbf{x}}^2 = 1\}
\]
gives
\[
L(z) = \frac{|S^{N-1}|}{2(\beta N)^{N/2}}\int_0^\infty e^{-zt}t^{N/2-1}I(t)\,dt \qquad I(t):= \int_{S^{N-1}} e^{t\sum_i \lambda_i x_i^2} \, d\omega_N(\mathbf{x}).
\]
Combining the two expressions for $L(z)$, we obtain
\[
\Gamma(N/2) \prod_{i=1}^N \frac{1}{\sqrt{z-\lambda_i}} = \int_0^\infty e^{-zt}t^{N/2-1}I(t)\,dt.
\]
Note that the right-hand side is the Laplace transform of $t^{(N/2)-1}I(t)$. Thus applying inverse Laplace transform gives
\[
t^{N/2-1}I(t) = \frac{\Gamma(N/2)}{2\pi \mathrm{i}} \int_{\gamma - \mathrm{i}\infty}^{\gamma+ \mathrm{i}\infty} e^{zt} \prod_{i=1}^N \frac{1}{\sqrt{z-\lambda_i}} \, dz.
\]
Since the real symmetric $N \times N$ matrix $M$ can be diagonalized by $M = O^TDO$ where $O$ is an orthogonal matrix and $D = \rm{diag}(\lambda_1,...,\lambda_N)$, the rotational invariance of the integral on $S^{N-1}$ gives
\[
\int_{S_{N-1}} e^{\beta \langle \sigma, M\sigma \rangle} d\omega_N(\sigma) = \int_{S^{N-1}} e^{\beta N \sum_{i} \lambda_i x_i^2} \,d\omega_N(\mathbf{x}) = I(\beta N).
\]
Hence, taking $t = \beta N$ gives the desired result,
\[
I(\beta N) = C_N \int_{\gamma - \mathrm{i}\infty}^{\gamma+ \mathrm{i}\infty} e^{\frac{N}{2}G_0(z)} \, dz.
\]
\end{proof}

\subsection{High-temperature saddle and contour estimates}
\label{app:high-temp-technical}

This subsection contains the technical estimates used in Section~\ref{sec:high}. They are included for completeness; the arguments are standard away from the spectral edge, except that the radial factor $I_N$ is treated separately in the main text.

\subsubsection*{Proof of Lemma~\ref{lem:J-derivative-estimates}}
\begin{proof}
For real $x\ge C_+ +\delta$, we have $ J'(x) = 2\beta \tau_x - \frac1N\sum_{i=1}^N\frac{1}{x-\lambda_i}, $ whereas $ \widehat J'(x) = 2\beta \tau_x - \int_{C_-}^{C_+}\frac{d\nu(k)}{x-k}. $ Hence
\[
J'(x)-\widehat J'(x) = -\left( \frac1N\sum_{i=1}^N\frac{1}{x-\lambda_i} - \int_{C_-}^{C_+}\frac{d\nu(k)}{x-k} \right).
\]
By the resolvent approximation in Corollary~\ref{cor:bulk-approximation}, the right-hand side is $\prec N^{-1}$ uniformly for $x\ge C_+ +\delta$ with overwhelming probability. This proves (i).

We next prove (ii). Since $K\subset U_\eta$ is compact and $V$ is holomorphic on $U_\eta$, all derivatives $V^{(\ell)}$ are bounded on $K$. Moreover, by choosing $\eta>0$ sufficiently small, $U_\eta$ stays a positive distance away from the spectral support $[C_-,C_+]$. Thus, with overwhelming probability,
\[
\inf_{z\in K}\min_{1\le i\le N}|z-\lambda_i|\ge c_K
\]
for some constant $c_K>0$ independent of $N$. For $\ell\ge 2$,
\[
J^{(\ell)}(z) = 2\beta V^{(\ell)}(z) - \frac{(-1)^{\ell-1}(\ell-1)!}{N} \sum_{i=1}^N\frac{1}{(z-\lambda_i)^\ell}.
\]
Therefore,
\[
\sup_{z\in K}|J^{(\ell)}(z)| \le C_K + \frac{C_\ell}{N}\sum_{i=1}^N c_K^{-\ell} = O(1).
\]
This proves (ii).
\end{proof}

\subsubsection*{Proof of Lemma~\ref{lem:gamma-approximation}}
\begin{proof}
We follow the proof of Corollary 5.2 in \cite{BaikLee2016}. Since $0 < \beta < \beta^*$, Lemma~\ref{lem:critical-point-Jhat} implies that $\widehat\gamma_J>C_+$. Choose $\delta>0$ such that $\widehat\gamma_J-\delta>C_+$. Then Lemma~\ref{lem:J-derivative-estimates}(i) applies in a neighborhood of $\widehat\gamma_J$. Fix small $\epsilon>0$. By Lemma~\ref{lem:J-derivative-estimates}(i), uniformly for $z\in[\widehat\gamma_J-N^{-1+2\epsilon},\widehat\gamma_J+N^{-1+2\epsilon}]$, we have
\[
J'(z)=\widehat J'(z)+O(N^{-1+\epsilon})
\]
with overwhelming probability. Since $\widehat J'(\widehat\gamma_J)=0$, Taylor expansion gives
\[
\begin{aligned}
\widehat J'(\widehat\gamma_J\pm N^{-1+2\epsilon}) &= \widehat J''(\widehat\gamma_J)(\pm N^{-1+2\epsilon}) + O(N^{-2+4\epsilon}).
\end{aligned}
\]
Moreover, $ \widehat J''(\widehat\gamma_J) = 2\beta V''(\widehat\gamma_J) + \int_{C_-}^{C_+} \frac{d\nu(k)}{(\widehat\gamma_J-k)^2} >0. $ Thus
\[
\widehat J'(\widehat\gamma_J+N^{-1+2\epsilon}) = \widehat J''(\widehat\gamma_J)N^{-1+2\epsilon}+O(N^{-2+4\epsilon})>0 \quad \text{and} \quad \widehat J'(\widehat\gamma_J-N^{-1+2\epsilon}) = -\widehat J''(\widehat\gamma_J)N^{-1+2\epsilon}+O(N^{-2+4\epsilon})<0
\]
for all sufficiently large $N$. Since $N^{-1+2\epsilon}$ dominates the error term $N^{-1+\epsilon}$, we obtain $J'(\widehat\gamma_J+N^{-1+2\epsilon})>0$ and $J'(\widehat\gamma_J-N^{-1+2\epsilon})<0$ with overwhelming probability.

As in the proof of Lemma~\ref{lem:critical-point-Jhat}, the function $J'$ is strictly increasing on the whole interval $(\lambda_1,\infty)$ because $x\mapsto\tau_x$ is nondecreasing while $x\mapsto N^{-1}\sum_i(x-\lambda_i)^{-1}$ is strictly decreasing. This argument does not require $V''$ to be defined globally. Locally near $\widehat\gamma_J$, where the holomorphic extension is available, we still have
\[
J''(z)
=
2\beta V''(z)+\frac1N\sum_{i=1}^N\frac{1}{(z-\lambda_i)^2}>0,
\]
for real $z$.
Therefore the unique critical point $\gamma_J$ of $J$ satisfies
\[
\widehat\gamma_J-N^{-1+2\epsilon}
<
\gamma_J
<
\widehat\gamma_J+N^{-1+2\epsilon}
\]
with overwhelming probability. Since $\epsilon>0$ is arbitrary, this proves $ |\gamma_J-\widehat\gamma_J|\prec N^{-1}. $

The second claim follows directly from Lemma~\ref{lem:critical-point-Jhat}, $\widehat\gamma_J>C_+$, and by the convergence of the largest eigenvalue to the spectral edge.
\end{proof}

\subsubsection*{Proof of Lemma~\ref{lem:J-saddle-comparison}}
\begin{proof}
By Lemma~\ref{lem:gamma-approximation}, $|\gamma_J-\widehat\gamma_J|\prec N^{-1}$. Since $\widehat\gamma_J>C_+$, we may choose a fixed compact set $K\subset U_\eta$ such that the interval between $\gamma_J$ and $\widehat\gamma_J$ is contained in $K$ with overwhelming probability. Lemma~\ref{lem:J-derivative-estimates}(ii) then gives $J''=O(1)$ and $J'''=O(1)$ uniformly on $K$.

Now, using $J'(\gamma_J)=0$, Taylor expansion yields
\[
J(\widehat\gamma_J)-J(\gamma_J) = \frac12 J''(\xi)(\widehat\gamma_J-\gamma_J)^2
\]
for some $\xi$ between $\gamma_J$ and $\widehat\gamma_J$. Hence, using Lemma~\ref{lem:gamma-approximation} and the uniform bound on $J''$, we obtain
\[
J(\gamma_J)-J(\widehat\gamma_J)=O_\prec(N^{-2})
\]
with overwhelming probability. Similarly,
\[
J''(\gamma_J)-J''(\widehat\gamma_J) = J'''(\zeta)(\gamma_J-\widehat\gamma_J) = O_\prec(N^{-1}),
\]
where $\zeta$ lies between $\gamma_J$ and $\widehat\gamma_J$. This proves the lemma.
\end{proof}

\subsubsection*{Proof of Lemma~\ref{lem:high-temp-approximation}}
\begin{proof}
We prove the estimate on the overwhelming probability event where Lemmas \ref{lem:gamma-approximation}, \ref{lem:J-saddle-comparison}, and \ref{lem:I-estimate} hold.

We first split the last integral into the local part $|y|\le N^\epsilon$ and the tail part $|y|>N^\epsilon$. We can write the contour integral in the scaled variable $z=\gamma_J+\frac{\mathrm{i}y}{\sqrt N}$. For the local part, we assume $z \in U_\eta$ which holds for large $N$. Then
\begin{align}
    \int_{\gamma_J-\mathrm{i}\frac{N^\epsilon}{\sqrt{N}}}^{\gamma_J+\mathrm{i}\frac{N^\epsilon}{\sqrt{N}}} e^{\frac{N}{2}J(z)}I_N(z)\,dz
    &=
    \frac{\mathrm{i}}{\sqrt N}
    \int_{-N^\epsilon}^{N^\epsilon}
    e^{\frac{N}{2}J(\gamma_J+\frac{\mathrm{i}y}{\sqrt N})}
    I_N\left(\gamma_J+\frac{\mathrm{i}y}{\sqrt N}\right)\,dy  \notag \\
    &=
    \frac{\mathrm{i} e^{\frac{N}{2}J(\gamma_J)}}{\sqrt N}
    \int_{-N^\epsilon}^{N^\epsilon}
    e^{\frac{N}{2}\left(
     J(\gamma_J+\frac{\mathrm{i}y}{\sqrt N})-J(\gamma_J)
    \right)}
     I_N\left(\gamma_J+\frac{\mathrm{i}y}{\sqrt N}\right)\,dy .
    \label{eq:contour-scaled}
\end{align}

Since $J'(\gamma_J)=0$, Taylor expansion gives
\begin{align}
    J\left(\gamma_J+\frac{\mathrm{i}y}{\sqrt N}\right)-J(\gamma_J)
    &=
    \frac{J''(\gamma_J)}{2}
    \left(\frac{\mathrm{i}y}{\sqrt N}\right)^2
    +
    \frac{J'''(\gamma_J)}{6}
    \left(\frac{\mathrm{i}y}{\sqrt N}\right)^3
    +
    O\left(\frac{|y|^4}{N^2}\right)  \notag \\
    &=
    -\frac{J''(\gamma_J)y^2}{2N}
    -
    \mathrm{i}\frac{J'''(\gamma_J)y^3}{6N^{3/2}}
    +
    O(N^{-2+4\epsilon}),
    \label{eq:J-local-expansion}
\end{align}
uniformly for $|y|\le N^\epsilon$. Hence
\[
e^{\frac{N}{2}\left( J(\gamma_J+\frac{\mathrm{i}y}{\sqrt N})-J(\gamma_J) \right)} = e^{-\frac{J''(\gamma_J)}{4}y^2} \left( 1-\mathrm{i}\frac{J'''(\gamma_J)}{12}\frac{y^3}{\sqrt N} +O(N^{-1+6\epsilon}) \right),
\]
uniformly for $|y|\le N^\epsilon$. Combining the result of Lemma \ref{lem:I-estimate} gives
\begin{align}
    &\int_{-N^\epsilon}^{N^\epsilon}
    e^{\frac{N}{2}\left(
    J(\gamma_J+\frac{\mathrm{i}y}{\sqrt N})-J(\gamma_J)
    \right)}
    I_N\left(\gamma_J+\frac{\mathrm{i}y}{\sqrt N}\right)\,dy  \notag\\
    &\quad =
    \sqrt{\frac{2\pi}{\beta N f''(\tau_{\gamma_J})}}
    \left(1+O(N^{-1/4+C\epsilon})\right)
    \int_{-N^\epsilon}^{N^\epsilon}
    e^{-\frac{J''(\gamma_J)}{4}y^2}
    \left(
        1-\mathrm{i}\frac{J'''(\gamma_J)}{12}\frac{y^3}{\sqrt N}
        +O(N^{-1+6\epsilon})
    \right)\,dy .
    \label{eq:local-integral-before-gaussian}
\end{align}
Note that the integral of the odd term vanishes:
\[
\int_{-N^\epsilon}^{N^\epsilon} y^3 e^{-\frac{J''(\gamma_J)}{4}y^2}\,dy=0.
\]
Since $J''(\gamma_J)$ is bounded below by a positive constant with overwhelming probability, the Gaussian tails outside $[-N^\epsilon,N^\epsilon]$ are exponentially small. Hence
\[
\int_{-N^\epsilon}^{N^\epsilon} e^{-\frac{J''(\gamma_J)}{4}y^2}\,dy = \int_{-\infty}^{\infty} e^{-\frac{J''(\gamma_J)}{4}y^2}\,dy +O(e^{-cN^{2\epsilon}}) = \frac{2\sqrt{\pi}}{\sqrt{J''(\gamma_J)}} +O(e^{-cN^{2\epsilon}}).
\]
Combining this with \eqref{eq:local-integral-before-gaussian}, we obtain
\begin{equation} \label{eq:local-integral-estimate}
    \int_{-N^\epsilon}^{N^\epsilon}
    e^{\frac{N}{2}\left(
    J(\gamma_J+\frac{\mathrm{i}y}{\sqrt N})-J(\gamma_J)
    \right)}
    I_N\left(\gamma_J+\frac{\mathrm{i}y}{\sqrt N}\right)\,dy
    =
    \frac{2\sqrt{2}\pi}
    {\sqrt{\beta N f''(\tau_{\gamma_J})J''(\gamma_J)}}
    \left(1+O(N^{-1/4+C\epsilon})\right)
\end{equation}

It remains to show that the contribution from $|y|>N^\epsilon$ is negligible. On this part we do not use the local representation $R_N(z)=e^{N\beta V(z)}I_N(z)$ since $V$ is only defined in the neighborhood $U_\eta$.  Instead, we use the original expression for $R_N$ as in \eqref{eq:R-N-def}. Put $z=\gamma_J+\mathrm{i}y/\sqrt N$.  Since $\gamma_J$ stays a fixed positive distance away from the spectrum, with overwhelming probability
\[
|z-\lambda_i|^2 =(\gamma_J-\lambda_i)^2+\frac{y^2}{N} \geq |\gamma_J-\lambda_i|^2\left(1+\frac{c y^2}{N}\right)
\]
for all $i$. Hence
\[
\prod_{i=1}^N |z-\lambda_i|^{-1/2} \leq \prod_{i=1}^N |\gamma_J-\lambda_i|^{-1/2} \left(1+\frac{c y^2}{N}\right)^{-N/4}.
\]
Moreover, by the original definition of $R_N$ and the estimate \eqref{eq:psi-property}, we have
\[
|R_N(\gamma_J+\frac{\mathrm{i}y}{\sqrt N})| \leq \int_0^{\beta_c/\beta} e^{N\beta(u\gamma_J-f(u))}\,du \leq C N^{-1/2} e^{N\beta V(\gamma_J)}
\]
with overwhelming probability. Therefore the tail contribution is bounded by
\[
C\frac{e^{N\beta V(\gamma_J)}}{N} \prod_{i=1}^N |\gamma_J-\lambda_i|^{-1/2} \int_{|y|>N^\epsilon} \left(1+\frac{c y^2}{N}\right)^{-N/4}\,dy .
\]
The last integral is bounded by $Ce^{-cN^{2\epsilon}}$, and hence the tail contribution is negligible compared with the stated error term. Combining the two estimates, we conclude that after removing the factor $dz=\rm{i}\,dy/\sqrt N$, the remaining $dy$-integral satisfies 
\[
\left| e^{-\frac{N}{2}J(\gamma_J)} \int_{|y|>N^\epsilon} \exp\left\{ -\frac12\sum_{i=1}^N\log(\gamma_J+\frac{\mathrm{i}y}{\sqrt N}-\lambda_i) \right\} R_N(\gamma_J+\frac{\mathrm{i}y}{\sqrt N}) \,dy \right| = o(N^{-1/2}).
\]

Combining this tail estimate with \eqref{eq:local-integral-estimate} and \eqref{eq:contour-scaled}, we find
\[
\int_{\gamma_J-\mathrm{i}\infty}^{\gamma_J+\mathrm{i}\infty} \exp\left\{ -\frac12\sum_{i=1}^N\log(z-\lambda_i) \right\} R_N(z) \, dz = \mathrm{i} e^{\frac{N}{2}J(\gamma_J)} \frac{2\sqrt{2}\pi} {N\sqrt{\beta f''(\tau_{\gamma_J})J''(\gamma_J)}} \left(1+O(N^{-1/4+C\epsilon})\right)
\]
Finally, by Lemmas \ref{lem:J-saddle-comparison} and \ref{lem:gamma-approximation} we have
\[
J(\gamma_J)=J(\widehat\gamma_J)+O_\prec(N^{-2}), \quad J''(\gamma_J) = J''(\widehat\gamma_J) + O_\prec(N^{-1}), \quad f''(\tau_{\gamma_J})= f''(\tau_{\widehat\gamma_J}) + O_\prec(N^{-1}),
\]
with overwhelming probability. Putting into the equation, the errors are absorbed into $O_\prec(N^{-1/4+C\epsilon})$, so we obtain the desired result,
\[
\int_{\gamma_J-\mathrm{i}\infty}^{\gamma_J+\mathrm{i}\infty} \exp\left\{ -\frac12\sum_{i=1}^N\log(z-\lambda_i) \right\} R_N(z) \, dz = \mathrm{i} e^{\frac{N}{2}J(\widehat\gamma_J)} \frac{2\sqrt{2}\pi} {N\sqrt{\beta f''(\tau_{\widehat\gamma_J})J''(\widehat\gamma_J)}} \left(1+O(N^{-1/4+C\epsilon})\right)
\]
\end{proof}

\subsection{Uniform low-temperature contour estimates}
\label{app:low-temp-technical}

We now collect the fixed-radius estimates used in Section~\ref{sec:low}.  For each fixed $t$, the phase is the same as in the low-temperature analysis of the spherical model after replacing the inverse temperature by $\beta t$.  The point here is to record the estimates uniformly for $t\in[\beta_c/\beta+\delta,T]$. Throughout this subsection, for fixed $t$ we write $\gamma_G=\gamma_G(t)$ for the unique critical point of $G(\cdot;t)$.

\subsubsection*{Proof of Lemma~\ref{lem:critical-point-G}}
\begin{proof}
We first note that
\[
G'(z;t) = 2\beta t-\frac{1}{N}\sum_{i=1}^N\frac{1}{z-\lambda_i} \quad \text{and} \quad G''(z;t) = \frac{1}{N}\sum_{i=1}^N\frac{1}{(z-\lambda_i)^2}>0
\]
for $z>\lambda_1$. Hence $G'(z;t)$ is strictly increasing on $(\lambda_1,\infty)$. Moreover,
\[
\lim_{z\downarrow \lambda_1}G'(z;t)=-\infty, \qquad \lim_{z\to\infty}G'(z;t)=2\beta t>0.
\]
Thus there exists a unique critical point $\gamma_G=\gamma_G(t)>\lambda_1$.

We prove the lower bound first. Set $ z_-=\lambda_1+\frac{1}{3\beta Nt}. $ Then
\[
G'(z_-;t) = 2\beta t-\frac{1}{N}\sum_{i=1}^N\frac{1}{z_- -\lambda_i} \leq 2\beta t-\frac{1}{N}\frac{1}{z_- -\lambda_1} = 2\beta t-3\beta t = -\beta t<0.
\]
Since $G'(\cdot;t)$ is strictly increasing and $G'(\gamma_G;t)=0$, we have $\gamma_G>z_-$. This proves $ \gamma_G-\lambda_1 \geq \frac{1}{3\beta Nt}. $

We next prove the upper bound. Set $ z_+=\lambda_1+N^{-1+\epsilon}. $ Since $0<\epsilon<1/12$, we have
\[
z_+\in \left( \lambda_1+N^{-1}, \lambda_1+O_\prec(N^{-2/3}) \right).
\]
Hence, by Corollary \ref{cor:edge-approximation},
\[
\frac{1}{N}\sum_{i=1}^N\frac{1}{z_+-\lambda_i} = \int_{C_-}^{C_+}\frac{d\nu(k)}{C_+-k} +o(1) = 2\beta_c+o(1)
\]
with overwhelming probability. Therefore,
\begin{align*}
    G'(z_+;t)
    &=
    2\beta t
    -
    \frac{1}{N}\sum_{i=1}^N\frac{1}{z_+-\lambda_i} \\
    &=
    2\beta t-2\beta_c+o(1) > 0
\end{align*}
with overwhelming probability. Since $G'(\cdot;t)$ is strictly increasing and $G'(\gamma_G;t)=0$, it follows that $\gamma_G<z_+$. Hence $\gamma_G-\lambda_1 \prec N^{-1}$. This completes the proof.
\end{proof}

\subsubsection*{Proof of Lemma~\ref{lem:G-approximation}}
\begin{proof}
Throughout the proof, we work on the overwhelming probability event on which Lemma \ref{lem:critical-point-G}, Corollary \ref{cor:bulk-approximation}, and Corollary \ref{cor:edge-approximation} hold uniformly in the relevant domain.

We first prove \eqref{eq:G-gamma-approx-low}. We can write
\begin{align*}
    G(\gamma_G;t)
    =
    2\beta t\lambda_1
    +
    2\beta t(\gamma_G-\lambda_1)
    -
    \frac{1}{N}\sum_{i=1}^N\log(\gamma_G-\lambda_i).
\end{align*}

By Lemma \ref{lem:critical-point-G}, $ 2\beta t(\gamma_G-\lambda_1)=O_\prec(N^{-1}) $ uniformly in $t$. Moreover applying Corollary \ref{cor:edge-approximation} gives
\[
\frac{1}{N}\sum_{i=1 }^N\log(\gamma_G-\lambda_i) = \int_{C_-}^{C_+}\log(C_+-x)\,d\nu(x) + 2\beta_c(\lambda_1-C_+) + O_\prec(N^{-1}),
\]
uniformly in $t\in[\beta_c/\beta+\delta,T]$. Combining the results, we obtain
\[
G(\gamma_G;t) = 2\beta t\lambda_1 - \int_{C_-}^{C_+}\log(C_+-x)\,d\nu(x) - 2\beta_c(\lambda_1-C_+) + O_\prec(N^{-1}),
\]
which proves \eqref{eq:G-gamma-approx-low}. We next prove \eqref{eq:G-derivative-bound-low}. For any fixed $0 < \epsilon < 1/12$,  $\gamma_G(t) - \lambda_1 \leq N^{-1+4\epsilon}$ holds with overwhelming probability. For $l\geq2$, direct differentiation gives
\[
\frac{(-1)^l}{(l-1)!}G^{(l)}(\gamma_G;t) = \frac{1}{N}\sum_{i=1}^N \frac{1}{(\gamma_G-\lambda_i)^l}.
\]
The lower bound follows from the $i=1$ term as follows.
\[
\frac{1}{N}\sum_{i=1}^N \frac{1}{(\gamma_G-\lambda_i)^l} \geq \frac{1}{N}\frac{1}{(\gamma_G-\lambda_1)^l} \geq N^{l-1-4l\epsilon}.
\]

For the upper bound, we split the sum into the first $N^{3\epsilon}$ eigenvalues and the remaining eigenvalues. By the lower bound in Lemma~\ref{lem:critical-point-G},
\[
\gamma_G-\lambda_1\ge \frac{1}{3\beta NT}.
\]
Therefore
\[
\frac1N\sum_{i=1}^{N^{3\epsilon}} \frac1{(\gamma_G-\lambda_i)^l} \le \frac1N\sum_{i=1}^{N^{3\epsilon}} \frac1{(\gamma_G-\lambda_1)^l} \le C^l N^{l-1+3\epsilon}.
\]
For $i>N^{3\epsilon}$, rigidity and the edge estimate for the classical locations imply
\[
\lambda_1-\lambda_i \ge c\,i^{2/3}N^{-2/3-\epsilon}
\]
with overwhelming probability. Since $\gamma_G>\lambda_1$, we have
\[
\gamma_G-\lambda_i \ge \lambda_1-\lambda_i \ge c\,i^{2/3}N^{-2/3-\epsilon}.
\]
Thus
\[
\begin{aligned}
\frac1N\sum_{i>N^{3\epsilon}} \frac1{(\gamma_G-\lambda_i)^l} \le \frac{C^l}{N} \sum_{i>N^{3\epsilon}} \frac{N^{2l/3+l\epsilon}}{i^{2l/3}} \le C^l N^{l-1+3\epsilon},
\end{aligned}
\]
where we used $l\ge2$. Combining the two estimates, we obtain
\[
\frac1N\sum_{i=1}^N \frac1{(\gamma_G-\lambda_i)^l} \le C_0^l N^{l-1+3\epsilon}.
\]
\end{proof}

\subsubsection{Construction and properties of $\Gamma$}
\label{app:Gamma-properties}

{
{
We adapt the global graph construction of \cite[Lemma~30 and Appendix~A]{LeeLi2023} to the effective inverse temperature $\beta t$.
}
In this subsection, we fix $t>\beta_c/\beta$ and write
\[
G(z)=G(z;t)
=
2\beta t z-\frac1N\sum_{i=1}^N\log(z-\lambda_i),
\]
where the logarithm is taken in the principal branch. Let $\gamma_G>\lambda_1$ be the unique real critical point of $G$.

{
We construct a contour
\[
\Gamma=\Gamma^-\cup\{\gamma_G\}\cup\Gamma^+
\]
with the following properties:
\begin{enumerate}
    \item[\rm(i)] $\Gamma^+$ is a $C^1$ curve contained in $\mathbb C^+$;
    \item[\rm(ii)] $\Gamma$ intersects the real axis only at $\gamma_G$;
    \item[\rm(iii)] $\Gamma^-$ is the reflection of $\Gamma^+$ across the real axis;
    \item[\rm(iv)] the tangent line of $\Gamma$ at $\gamma_G$ is parallel to the imaginary axis;
    \item[\rm(v)] $\Gamma^+$ has the global parametrization
    \[
    \Gamma^+
    =
    \left\{
    h_t(y)+\rm{i}y:
    0<y<\frac{\pi}{2\beta t}
    \right\},
    \]
    and its asymptote in the negative real direction is
    $y=\pi/(2\beta t)$;
    \item[\rm(vi)] $h_t(y)<\gamma_G$ for every
    $0<y<\pi/(2\beta t)$.
\end{enumerate}
}

For fixed
\[
0<y<\frac{\pi}{2\beta t},
\]
set
\[
H_y(x)
:=
\operatorname{Im}G(x+\rm{i}y)
=
2\beta t y
-\frac1N\sum_{i=1}^N
\operatorname{Arg}(x-\lambda_i+\rm{i}y),
\]
where
$0<\operatorname{Arg}(x-\lambda_i+\rm{i}y)<\pi$.
Then
\[
\lim_{x\to-\infty}H_y(x)=2\beta t y-\pi<0,
\qquad
\lim_{x\to+\infty}H_y(x)=2\beta t y>0,
\]
and
\[
\frac{\partial H_y}{\partial x}(x)
=
\frac{y}{N}\sum_{i=1}^N
\frac1{(x-\lambda_i)^2+y^2}>0.
\]
Hence, for each $y\in(0,\pi/(2\beta t))$, there exists a unique real number $h_t(y)$ such that 
\begin{equation}\label{eq:Gamma}
2\beta t y
=
\frac1N\sum_{i=1}^N
\operatorname{Arg}(h_t(y)-\lambda_i+\rm{i}y).
\end{equation}
Since $\partial_xH_y(h_t(y))>0$, the implicit function theorem implies $h_t\in C^1((0,\pi/(2\beta t)))$.

To identify the endpoint at $y=0$, define
\[
Q(x,y):=
\begin{cases}
\dfrac{\operatorname{Im}G(x+\rm{i}y)}{y},&y\neq0,\\[2mm]
G'(x),&y=0.
\end{cases}
\]
Since $G$ is analytic near $\gamma_G$, $Q$ is $C^1$ near $(\gamma_G,0)$, with
\[
Q(\gamma_G,0)=0,
\qquad
\partial_xQ(\gamma_G,0)=G''(\gamma_G)>0.
\]
{
Moreover, the expansion
\[
Q(x,y)
=
G'(x)-\frac{y^2}{6}G'''(x)+O(y^4)
\]
shows that
\[
\partial_yQ(\gamma_G,0)=0.
\]
}
The implicit function theorem therefore gives a unique local solution $x=h_t(y)$ satisfying
\[
h_t(0)=\gamma_G,
\qquad
h_t'(0)
=
-\frac{\partial_yQ(\gamma_G,0)}
{\partial_xQ(\gamma_G,0)}
=0.
\]
By uniqueness, this local solution agrees with the global graph above. Hence
\[
\Gamma^+
:=
\left\{
h_t(y)+\rm{i}y:
0<y<\frac{\pi}{2\beta t}
\right\}
\]
is a $C^1$ curve in $\mathbb C^+$. This proves property {\rm (i)}. Since $h_t(y)\to\gamma_G$ as $y\downarrow0$ and $y>0$ on $\Gamma^+$, the upper branch meets the real axis only at $\gamma_G$. This proves property {\rm (ii)}. The identity $h_t'(0)=0$ proves property {\rm (iv)}.

{
We next prove property {\rm (vi)}. For $y>0$,
\[
\frac{\partial}{\partial y}
\operatorname{Im}G(\gamma_G+\rm{i}y;t)
=
2\beta t
-
\frac1N\sum_{i=1}^N
\frac{\gamma_G-\lambda_i}
{(\gamma_G-\lambda_i)^2+y^2}.
\]
Since
\[
2\beta t
=
\frac1N\sum_{i=1}^N\frac1{\gamma_G-\lambda_i},
\]
the derivative is strictly positive. Hence
\[
H_y(\gamma_G)
=
\operatorname{Im}G(\gamma_G+\rm{i}y;t)>0.
\]
Because $H_y$ is strictly increasing in $x$ and $H_y(h_t(y))=0$, it follows that
\[
h_t(y)<\gamma_G,
\qquad
0<y<\frac{\pi}{2\beta t},
\]
which proves property {\rm (vi)}.
}

We now verify the steepest-descent property. Write
\[
G'(h_t(y)+\rm{i}y)=a(y)+\rm{i}b(y).
\]
Then
\[
b(y)
=
\frac{y}{N}\sum_{i=1}^N
\frac1{(h_t(y)-\lambda_i)^2+y^2}>0.
\]
Differentiating $\operatorname{Im}G(h_t(y)+\rm{i}y)=0$ gives
\[
h_t'(y)=-\frac{a(y)}{b(y)},
\]
and therefore
\[
\frac{d}{dy}\operatorname{Re}G(h_t(y)+\rm{i}y)
=
-\frac{a(y)^2+b(y)^2}{b(y)}<0.
\]
Hence $\operatorname{Re}G$ is strictly decreasing along $\Gamma^+$ as $y$ increases.

The parametrization gives
\[
0<y<\frac{\pi}{2\beta t}
\qquad\text{on }\Gamma^+.
\]
Moreover,
\[
h_t(y)\longrightarrow-\infty
\qquad\text{as }y\uparrow\frac{\pi}{2\beta t}.
\]
Indeed, if $y_n\uparrow\pi/(2\beta t)$ and $h_t(y_n)$ had a finite limit $x_0$, then \eqref{eq:Gamma} would imply
\[
\pi
=
\frac1N\sum_{i=1}^N
\operatorname{Arg}
\left(
x_0-\lambda_i+\frac{\rm{i}\pi}{2\beta t}
\right)
<\pi,
\]
a contradiction. The alternative $h_t(y_n)\to+\infty$ is also impossible because the right-hand side of \eqref{eq:Gamma} would tend to zero. Hence the asymptote of $\Gamma^+$ is
\[
y=\frac{\pi}{2\beta t}.
\]
This proves property {\rm (v)}.

For later use in the contour deformation, we also record the monotonicity of $h_t$ near its left end. From the implicit equation,
\[
-h_t'(y)
=
\frac{
2\beta t-\frac1N\sum_{i=1}^N
\frac{h_t(y)-\lambda_i}{(h_t(y)-\lambda_i)^2+y^2}
}{
\frac{y}{N}\sum_{i=1}^N
\frac1{(h_t(y)-\lambda_i)^2+y^2}
}.
\]
Using
\[
\left|\frac{r}{r^2+y^2}\right|
\leq\frac1{2y},
\qquad
\frac1{r^2+y^2}\leq\frac1{y^2},
\]
we obtain, for $y>1/(4\beta t)$,
\[
h_t'(y)\leq\frac12-2\beta t y.
\]
Hence, for every fixed $c_0\in(1/4,\pi/2)$, $h_t$ is strictly decreasing on
\[
\left[\frac{c_0}{\beta t},\frac{\pi}{2\beta t}\right).
\]
This also shows that the tail of $\Gamma^+$ can be parametrized by its real coordinate when needed.

Finally, since
\[
G(\overline z)=\overline{G(z)},
\]
the lower branch $\Gamma^-$ is the reflection of $\Gamma^+$ across the real axis. This proves property {\rm (iii)}. We orient
\[
\Gamma=\Gamma^-\cup\{\gamma_G\}\cup\Gamma^+
\]
from
$-\infty-\frac{\pi\rm{i}}{2\beta t}$ through $\gamma_G$ to
$-\infty+\frac{\pi\rm{i}}{2\beta t}$.
}

\subsubsection{Local estimates near $\gamma_G$}
\label{app:local-contour-estimates}

{
Let $\Gamma^+$ be the upper branch constructed in Appendix~\ref{app:Gamma-properties}. Recall $G'(\gamma_G;t)=0$. For $w=z-\gamma_G$, 
\begin{equation}\label{eq:G-local-Taylor}
G(\gamma_G+w;t)-G(\gamma_G;t)
=
\sum_{\ell=2}^{\infty}
\frac{G^{(\ell)}(\gamma_G;t)}{\ell!}w^\ell .
\end{equation}
By Lemma~\ref{lem:G-approximation}, uniformly for $t\in[\beta_c/\beta+\delta,T]$,
\begin{equation}\label{eq:G-derivative-local-bounds}
G''(\gamma_G;t)\geq N^{1-8\epsilon},
\qquad
\frac{|G^{(\ell)}(\gamma_G;t)|}{\ell!}
\leq
\frac{C_0^\ell}{\ell}N^{\ell-1+3\epsilon},
\qquad \ell\geq2 .
\end{equation}

Write
\[
z-\gamma_G=u+\rm{i}v,
\qquad z\in\Gamma^+,\quad v>0, \quad B_r = \{z \in \C : |z - \gamma_G| < r\}.
\]
We claim that throughout $\Gamma^+\cap B_{N^{-2}}$,
\begin{equation}\label{eq:Gamma-local-cone}
|u|\leq\frac v2.
\end{equation}
By property {\rm (iv)}, this holds sufficiently close to $\gamma_G$. If it failed, let $u+\rm{i}v$ be the first point, in the direction of increasing $v$, where $|u|=v/2$. Since $\operatorname{Im}G=0$ on $\Gamma$ and $G''(\gamma_G;t)$ is real,
\[
G''(\gamma_G;t)uv
=
-\operatorname{Im}
\sum_{\ell=3}^{\infty}
\frac{G^{(\ell)}(\gamma_G;t)}{\ell!}
(u+\rm{i}v)^\ell .
\]
At this point $v\asymp|u+\rm{i}v|$, and \eqref{eq:G-derivative-local-bounds} gives
\[
|u|
\leq
C N^{1+11\epsilon}|u+\rm{i}v|^2.
\]
Since $|u+\rm{i}v|\leq N^{-2}$,
\[
\frac{|u|}{v}
\leq C N^{-1+11\epsilon}=o(1),
\]
contradicting $|u|=v/2$. This proves
\eqref{eq:Gamma-local-cone}.

Let $z_{\rm loc}\in\Gamma^+$ be the first point, in the direction of increasing imaginary coordinate, such that
\[
|z_{\rm loc}-\gamma_G|=N^{-3},
\]
and write
\[
z_{\rm loc}-\gamma_G=u_{\rm loc}+\rm{i}y_{\rm loc}.
\]
By \eqref{eq:Gamma-local-cone},
\[
|u_{\rm loc}|\leq\frac{y_{\rm loc}}2,
\]
and hence
\begin{equation}\label{eq:zloc-imaginary-lower}
y_{\rm loc}
\geq
\frac{2}{\sqrt5}N^{-3}
\geq cN^{-3}.
\end{equation}
Furthermore,
\begin{align}
0
\leq
G(\gamma_G;t)-G(z_{\rm loc};t)
&\leq
|G(z_{\rm loc};t)-G(\gamma_G;t)| \nonumber\\
&\leq
\sum_{\ell=2}^{\infty}
\frac{C_0^\ell}{\ell}
N^{\ell-1+3\epsilon}N^{-3\ell}
\leq
2C_0^2N^{-5+3\epsilon}.
\label{eq:zloc-G-bound}
\end{align}
All estimates are uniform for
$t\in[\beta_c/\beta+\delta,T]$.
}

\subsubsection*{Proof of Lemma~\ref{lem:estimate-exp-G}}
\begin{proof}
We first define
\[
K_N(t) := -\rm{i} e^{-\frac{N}{2}G(\gamma_G;t)} \int_{\gamma_G-\rm{i}\infty}^{\gamma_G+\rm{i}\infty} e^{\frac{N}{2}G(z;t)}\,dz .
\]
Then \eqref{eq:estimate-exp-G} follows immediately from this definition. Since the contour integral term in $K_N(t)$ represents the spherical integral we saw in Lemma \ref{lem:integral-representation}, it directly follows that $K_N(t)$ is a positive real number.  It remains to prove the polynomial bounds on $K_N(t)$.

Put $z=\gamma_G+\rm{i}u$. Since $dz=\rm{i}\,du$, we have
\begin{align}
    K_N(t)
    &=
    \int_{-\infty}^{\infty}
    \exp\left\{
        \frac{N}{2}
        \bigl(G(\gamma_G+\rm{i}u;t)-G(\gamma_G;t)\bigr)
    \right\}\,du \nonumber \\
    &=
    \int_{-\infty}^{\infty}
    \exp\left\{
        iN\beta t u
        -
        \frac{1}{2}\sum_{j=1}^N
        \log\left(
            1+\frac{iu}{\gamma_G-\lambda_j}
        \right)
    \right\}\,du . \label{eq:KN-vertical-expression}
\end{align}
We work on the overwhelming probability event on which Condition \ref{cond:rigidity-eigenvalues}, Lemma \ref{lem:critical-point-G}, and Lemma \ref{lem:G-approximation} hold.

We first prove the upper bound. Taking absolute values in \eqref{eq:KN-vertical-expression}, we obtain
\[
|K_N(t)| \leq \int_{-\infty}^{\infty} \exp\left\{ -\frac{1}{4} \sum_{j=1}^N \log\left( 1+\frac{u^2}{(\gamma_G-\lambda_j)^2} \right) \right\}\,du .
\]
{
To make the uniformity in $t$ explicit, the critical-point equation gives
\[
2\beta t
=
\frac1N\sum_{j=1}^N\frac1{\gamma_G-\lambda_j}
\leq
\frac1{\gamma_G-\lambda_1},
\]
and hence
\[
\gamma_G-\lambda_1
\leq
\frac1{2\beta t}
\leq
\frac1{2\beta_c}.
\]
Together with the boundedness of spectrum with overwhelming probability, this implies
\[
\gamma_G-\lambda_j\leq C
\qquad (1\leq j\leq N)
\]
uniformly for $t>\beta_c/\beta$. Therefore
\[
|K_N(t)|
\leq
\int_{-\infty}^{\infty}(1+cu^2)^{-N/4}\,du
\leq C.
\]
}
Thus
\[
|K_N(t)|<C
\]
with overwhelming probability, uniformly for $t>\beta_c/\beta$.

{
We now prove the lower bound. We deform the vertical contour to the contour $\Gamma$ constructed in Appendix~\ref{app:Gamma-properties}. On a circular arc $|z|=R$ with $\operatorname{Re}z\leq\gamma_G$,
\[
\operatorname{Re}G(z;t)
\leq
2\beta T|\gamma_G|-\log(R/2)
\]
for large $R$, uniformly for
$t\in[\beta_c/\beta+\delta,T]$, so the arc contribution tends to zero. The tail monotonicity of $h_t$ proved in Appendix~\ref{app:Gamma-properties} also ensures absolute convergence along the two ends of $\Gamma$. Indeed, for $y\geq c_0/(\beta t)$ one has $h_t'(y)\leq-c<0$, so the tail can be parametrized by $x=h_t(y)$ with $|dy/dx|\leq C$. Moreover, as $x\to-\infty$ along $\Gamma^+$,
\[
\operatorname{Re}G(x+\rm{i}y;t)
=
2\beta t x-\frac1N\sum_{i=1}^N
\log|x-\lambda_i+\rm{i}y|
=
2\beta t x-\log|x|+O(1),
\]
and hence the tail integral is absolutely convergent. The same holds for $\Gamma^-$ by symmetry. Therefore
\[
\int_{\gamma_G-\rm{i}\infty}^{\gamma_G+\rm{i}\infty}
e^{\frac N2G(z;t)}\,dz
=
\int_{\Gamma}e^{\frac N2G(z;t)}\,dz.
\]
Since $\operatorname{Im}G=0$ on $\Gamma$, symmetry gives
\begin{equation}\label{eq:KN-gamma-plus}
K_N(t)
=
2\int_{\Gamma^+}
\exp\left\{
\frac N2\bigl(G(z;t)-G(\gamma_G;t)\bigr)
\right\}\,dy.
\end{equation}
The variable $y$ is a global increasing parameter on $\Gamma^+$, so no cancellation occurs in this integral.

Let $z_{\rm loc}$ and $y_{\rm loc}=\operatorname{Im}z_{\rm loc}$ be as in Appendix~\ref{app:local-contour-estimates}. By \eqref{eq:zloc-imaginary-lower},
\[
y_{\rm loc}\geq cN^{-3},
\]
and by \eqref{eq:zloc-G-bound},
\[
G(\gamma_G;t)-G(z_{\rm loc};t)
\leq
2C_0^2N^{-5+3\epsilon}.
\]
Since $\operatorname{Re}G$ decreases as $y$ increases along $\Gamma^+$,
\[
G(h_t(y)+\rm{i}y;t)\geq G(z_{\rm loc};t),
\qquad 0\leq y\leq y_{\rm loc}.
\]
Therefore
\begin{align*}
\frac12K_N(t)
&\geq
\int_0^{y_{\rm loc}}
\exp\left\{
\frac N2
\bigl(G(h_t(y)+\rm{i}y;t)-G(\gamma_G;t)\bigr)
\right\}\,dy\\
&\geq
y_{\rm loc}
\exp\left\{
\frac N2
\bigl(G(z_{\rm loc};t)-G(\gamma_G;t)\bigr)
\right\}\\
&\geq
cN^{-3}
\exp\{-C_0^2N^{-4+3\epsilon}\}.
\end{align*}
Thus, after changing $C>0$ if necessary,
\[
K_N(t)>N^{-C}
\]
with overwhelming probability, uniformly for
$t\in[\beta_c/\beta+\delta,T]$.
}
\end{proof}

\bibliographystyle{references_style}
\bibliography{references}

\end{document}